\newtheorem{thm}{Theorem}[section]
\newtheorem{lem}[thm]{Lemma}
\newtheorem{prop}[thm]{Proposition}
\theoremstyle{definition}
\newtheorem{defn}[thm]{Definition}
\newtheorem{rmk}[thm]{Remark}
\numberwithin{equation}{section}
\newcommand{\N}{\mathbb{N}}
\newcommand{\R}{\mathbb{R}}
\newcommand{\Q}{\mathbb{Q}}
\newcommand{\K}{\mathcal{K}}
\newcommand{\M}{\mathcal{M}}
\newcommand{\T}{\mathcal{T}}
\newcommand{\U}{\mathcal{U}}
\newcommand{\ep}{\varepsilon}
\newcommand{\C}{\mathfrak{C}}
\newcommand{\LC}{\mathfrak{LC}}
\newcommand{\SC}{\mathfrak{SC}}
\newcommand{\nbd}{\nobreakdash}
\newcommand{\nin}{{n\in\N}}
\newcommand{\nti}{{n\to\infty}}
\newcommand{\meas}{\mathrm{meas}}
\DeclareMathOperator{\supp}{supp}
\def\overcircle#1{\overset{\hbox{\tiny$\circ$}}{#1}}
\begin{document}
\title[]
{Topologies of continuity for\\  Carath\'eodory delay differential equations \\ with applications in non-autonomous dynamics}
\author[I.P.~Longo]{Iacopo P. Longo}
\address{Departamento de Matem\'{a}tica Aplicada, Universidad de
Valladolid, Paseo del Cauce 59, 47011 Valladolid, Spain.}
\email[Iacopo P. Longo]{iaclon@wmatem.eis.uva.es}
\email[Sylvia Novo]{sylnov@wmatem.eis.uva.es}
\email[Rafael Obaya]{rafoba@wmatem.eis.uva.es}
\thanks{Partly supported by MINECO/FEDER
under project MTM2015-66330-P and EU Marie-Sk\l odowska-Curie ITN Critical Transitions in
Complex Systems (H2020-MSCA-ITN-2014 643073 CRITICS)}
\author[S.~Novo]{Sylvia Novo}
\author[R.~Obaya]{Rafael Obaya}
\subjclass[2010]{34A34, 37B55, 34A12, 34F05}
\date{}
\begin{abstract}
We study some already introduced and some new strong  and weak topologies of integral type to provide continuous dependence on continuous initial data for the solutions of non-autonomous Carath\'eodory delay differential equations. As a consequence, we obtain new families of continuous  skew-product semiflows generated by delay differential equations whose vector fields  belong to such metric topological vector spaces of Lipschitz Carath\'eodory functions. Sufficient conditions for the equivalence of all or some of the considered strong or weak topologies are also given. Finally, we also provide results of continuous dependence of the solutions as well as of continuity of the skew-product semiflows generated by  Carath\'eodory delay differential equations when the considered phase space is a Sobolev space.
\end{abstract}
\keywords{Carath\'eodory functions, non-autonomous Carath\'eodory differential equations, continuous dependence on initial data, linearized skew-product semiflow.}
\maketitle
\section{Introduction}\label{secintro}
\noindent In this paper, we provide an extension of the theory on Carath\'eodory functions in order to allow the study of the solutions of  delay differential equations with finite delay of the type
\begin{equation}\label{eq:16.07-21:13}
\dot x=f\big(t,x(t),x(t-1)\big),
\end{equation}
where $f$ satisfies Carath\'eodory conditions. It is interesting to point out that, despite being a very particular class of delay differential problems, equations like \eqref{eq:16.07-21:13} are widely and successfully used in engineering and applied sciences to modelize many phenomena in which the past affects the future. Nevertheless, the lack of results of continuous variation of the solutions under Carath\'eodory conditions, prevented from applying many theoretical tools to analyze the qualitative behavior of the systems. Through the study of strong and weak metric topologies of integral type (part of which are genuinely new) and the analysis of the properties of some subsets of Carath\'eodory functions, we are able to prove several results of continuous dependence of the solutions with respect to initial data. Consequently, we provide the continuity of the skew-product semiflows composed of the flow on the hull of a Carath\'eodory function (satisfying appropriate assumptions) and of the solutions on a suitable phase space of the associated delay differential equation.
\par
\smallskip
The analogous query and some results for Carath\'eodory ordinary differential equations date back to the works by Artstein~\cite{paper:ZA1,paper:ZA2,paper:ZA3},  Heunis~\cite{paper:AJH}, Miller and Sell~\cite{book:RMGS, paper:RMGS1},  Neustadt~\cite{paper:LWN}, Opial~\cite{paper:O} and Sell \cite{paper:GS1, book:GS} among many others. However, despite its potential interest, such classic theory remained inconveniently incomplete. A large and systematic effort to make up for that, has been recently carried out by Longo et al. \cite{paper:LNO1, paper:LNO2}. Nevertheless, the same problems for Carath\'eodory delay differential equations remained still untreated. Relying on some of the structural and topological results in  \cite{paper:LNO1, paper:LNO2}, we fill such gap and pose new questions. It is worth noticing, however, that, as the problem becomes essentially infinite-dimensional, this is not a trivial extension of the previous results. Such framework opens  a wide range of dynamical scenarios in which it is possible to combine techniques of continuous skew-product flows, processes and random dynamical systems (see Arnold~\cite{book:LA}, Aulbach and  Wanner~\cite{paper:AW}, Caraballo and Han~\cite{book:CH},  Carvalho et al.~\cite{book:CLR}, Johnson et al.~\cite{book:JONNF}, P\"otzsche and Rasmussen~\cite{paper:CPMR}, Sacker and Sell \cite{sase}, Sell~\cite{book:GS},  Shen and Yi~\cite{paper:WSYY} and the references therein). As a consequence, it is possible to perform a richer qualitative analysis of  the local and global behavior of the solutions for such Carath\'eodory delay differential equations.  \par
\smallskip
Besides the introduction, the paper consists of three more sections which are organized as follows. In Section~\ref{sectopo}, we set  some preliminary notation and introduce the topological spaces which will be used throughout the rest of the work. Particularly, we recall the definitions of  the spaces of Lipschitz Carath\'eodory ($\LC$) and Strong Carath\'eodory ($\SC$) functions, and endow them with several possible topologies: besides the already established topologies (which are collectively analyzed in \cite{paper:LNO1, paper:LNO2}), such as  the classic $\T_B$, $\T_D$ (firstly presented in~\cite{book:RMGS, paper:RMGS1}) and $\sigma_D$ (also treated in ~\cite{paper:ZA1,paper:ZA2,paper:ZA3,paper:AJH,paper:LWN}), and the more recent  $\T_{\Theta}$ and $\sigma_{\Theta}$ (firstly presented in \cite{paper:LNO1} and \cite{paper:LNO2}, respectively), we introduce the new hybrid topologies $\T_{\Theta D}$, $\sigma_{\Theta D}$, $\T_{\Theta \smash{\widehat\Theta}}$, $\sigma_{\Theta \smash{\widehat\Theta}}$ and $\T_{\Theta B}$, where by hybrid we mean that they are, somehow, derived from the previous ones in a way that it is possible to  treat the first $N$ components of the spatial variable (representing the current state in a delay differential equation) in a different way from the last $N$ ones (representing the history of the state). The symbols $\Theta$ and $\widehat\Theta$  stand for sets of moduli of continuity which identify numerable quantities of compact sets of continuous functions on which the sequences of elements in $\LC$ are asked to converge. Furthermore, we show how such hybrid topologies relate to the first ones and how to apply or develop, in this new context, some of the  topological results obtained in  \cite{paper:LNO1, paper:LNO2} such as, the propagation of specific topological features on the $m$-bounds and the $l$-bounds, and the sufficient conditions for the equivalence of some or all of the introduced topologies.
\par
\smallskip
In Section~\ref{sec:Continuity-classic-topologies}, for each one of the previous topologies, we provide sufficient conditions on a subset $E$ of Lipschitz Carath\'eodory functions to have continuous dependence of the solutions with respect to initial data and to define a continuous skew-product semiflow on $E\times \mathcal{C}$, where $\mathcal{C}$ denotes the set of continuous functions mapping $[-1,0]$ onto $\R^N$. In particular, Subsection~\ref{subsec:classictopologies} deals with the result when the classic topologies $\T_B$, $\T_D$ and $\sigma_D$ are employed whereas Subsection~\ref{subsec:Ttx} treats the case in which the new hybrid topologies $\T_{\Theta B}$,  $\T_{\Theta D}$ and $\sigma_{\Theta D}$ are used. Notice also that we provide a theorem of continuity of the time-translations for all the new hybrid topologies.
\par
\smallskip
Section~\ref{sec:Sobolev} deals with the same problem, where the phase space $\mathcal{C}$ is now changed for the Sobolev space $\mathcal{C}^{1,p}$ of continuous functions from $[-1,0]$ to $\R^N$ which are differentiable almost everywhere and whose derivative is in $L^p([-1,0])$. In fact, such spaces are the natural environment to look at, because also in those cases in which the initial data is assumed to be just in $\mathcal{C}$, as soon as one looks at the solution for positive time, one has that $\dot x(t,f,\phi)=f\big(t,x(t),x(t-1)\big)\in L^p_{loc}$. Interestingly, if $p>1$, by finely tuning the choice of the sets of moduli of continuity $\Theta$ and $\widehat\Theta$, and under mild  assumptions on the  $m$-bounds of the functions, it is possible to retrieve a continuous skew-product semiflow for the topology $\T_{\Theta \smash{\widehat\Theta}}$. In order to obtain the same result for $\mathcal{C}^{1,1}$, assumptions on the $l$-bounds are necessary. It is worth noticing that reasoning as for these last results, we are also able to improve the information on the solutions obtained in the previous section if one disregards the interval $[-1,0]$, that is, the initial data. In other words, for any initial data in $\mathcal{C}$, we obtain the continuous variation in $\mathcal{C}^{1,p}([0,T])$ (resp. $\mathcal{C}^{1,1}([0,T])$) for any $T\ge0$ contained in the maximal interval of definition of the solution.
\section{Spaces and topologies}\label{sectopo}
\noindent In the following, we will denote by $\R^N$ the $N$\nbd-dimensional euclidean space with norm $|\cdot|$ and by $B_r$ the closed ball of $\R^N$ centered at the origin and with radius $r$. When $N=1$ we will simply write $\R$ and the symbol $\R^+$ will denote the set of positive real numbers. Furthermore, unless noted otherwise, $p$ is an integer in $[1,\infty)$, and for any interval $I\subseteq\R$ and any $W\subset\R^N$, we will use the following notation
\begin{itemize}[leftmargin=25pt]
\item[] $C(I,W)$: space of continuous functions mapping $I$ to $W$, endowed with the norm $\|\cdot\|_\infty$.
\item[] $C_C(\R)$: space of continuous functions with compact support in $\R$, endowed with the norm $\|\cdot\|_\infty$. When we want to restrict to the positive continuous functions with compact support in $\R$, we will write  $C^+_C(\R)$.
\item[] $L^p(I,\R^N)$, $1\le p \le \infty$: space of measurable functions from $I$ to $\R^N$ whose norm is in the Lebesgue space $L^p(I)$.
\item[] $L^p_{loc}(\R^N)$: the space of all functions $x(\cdot)$ of $\R$ into $\R^N$ such that for every compact interval $I\subset\R$, $x(\cdot)$ belongs to $L^p\big(I,\R^N\big)$. When $N=1$, we will simply write $L^p_{loc}$.
\end{itemize}
We will consider, and denote by $\C_p\big(\R^M,\R^N\big)$ (or simply $\C_p$ when $M=N$) the set of functions $f\colon\R\times\R^M\to \R^N$ satisfying
\begin{itemize}[leftmargin=25pt]
\item[(C1)] $f$ is Borel measurable and
\item[(C2)] for every compact set $K\subset\R^M$ there exists a real-valued function $m^K\in L^p_{loc}$, called \emph{$m$-bound} in the following, such that for almost every $t\in\R$ one has $|f(t,x)|\le m^K(t)$ for any $x\in K$.
\end{itemize}
As follows, we recall the definitions of the sets of Carath\'eodory functions which are subsequently used.
\begin{defn}\label{def:LC}
A function $f\colon\R\times\R^{M}\to \R^{N}$ is said to be \emph{Lipschitz Carath\'eodory}, and we will write $f\in \LC_p\big(\R^M,\R^N\big)$ (or simply $f\in\LC_p$ when $M=N$), if it satisfies (C1), (C2) and
\begin{itemize}[leftmargin=25pt]
\item[(L)] for every compact set $K\subset\R^M$ there exists a real-valued function $l^K\in L^p_{loc}$ such that for almost every $t\in\R$ one has $|f(t,x_1)-f(t,x_2)|\le l^K(t)|x_1-x_2|$ for any $x_1,x_2\in K$.
\end{itemize}
In particular, for any compact set $K\subset\R^M$, we refer to \emph{the optimal $m$-bound} and \emph{the optimal $l$-bound} of $f$ as to
\begin{equation}
m^K(t)=\sup_{x\in K}|f(t,x)|\qquad \mathrm{and}\qquad l^K(t)=\sup_{\substack{x_1,x_2\in K\\ x_1\neq x_2}}\frac{|f(t,x_1)-f(t,x_2)|}{|x_1-x_2|}\, ,
\label{eqOptimalMLbound}
\end{equation}
respectively. Clearly, for any compact set $K\subset\R^M$ the suprema in \eqref{eqOptimalMLbound}  can be taken for a countable dense subset of $K$ leading to the same actual definition, which guarantees that the functions defined in \eqref{eqOptimalMLbound} are measurable.
\end{defn}
\begin{defn}\label{def:SC}
A function $f\colon\R\times\R^M\to \R^N$ is said to be \emph{strong Carath\'eodory}, and we will write $f\in \SC_p\big(\R^M,\R^N\big)$ (or simply $f\in\SC_p$ when $M=N$), if it satisfies (C1), (C2) and
\begin{itemize}[leftmargin=25pt]
\item[(S)] for almost every $t\in\R$, the function $f(t,\cdot)$ is continuous.
\end{itemize}
The notion of \emph{optimal $m$-bound} for a strong  Carath\'eodory function on any compact set $K\subset\R^M$, is defined exactly as in equation \eqref{eqOptimalMLbound}.
\end{defn}
\begin{rmk}
As regards Definitions~\ref{def:LC} and ~\ref{def:SC},  when $p=1$, we will omit the number $1$ from the notation. For example, we will simply write $\LC$ instead of $\LC_1$. Moreover, the functions which lay in the same set and only differ on a negligible subset of  $\R^{1+N}$ will be identified; such a rule implies that $\LC_p\big(\R^M,\R^N\big)\subset\SC_p\big(\R^M,\R^N\big)$.
\end{rmk}
\begin{defn}
\label{def:l1l2bounds}
Let us consider a function $f\in\C_p(\R^{2N},\R^N)$. We say that \emph{$f$ admits $l_1$-bounds (resp. $l_2$-bounds)} if for every $j\in\N$ there exists a function $l_1^j(\cdot)\in L^p_{loc}$ (resp. $l_2^j(\cdot)\in L^p_{loc}$) such that for almost every $t\in\R$
\begin{equation*}
\begin{split}
|f(t,x_1,u)-f(t,x_2,u)|&\le  l_1^{j}(t)\,|x_1-x_2|\quad\text{for all }(x_1,u),(x_2,u)\in B_j\\[0.5ex]
\big(\text{resp. }|f(t,x,u_1)-f(t,x,u_2)|&\le  l_2^j(t)\,|u_1-u_2|\quad\text{for all }(x,u_1),(x,u_2)\in B_j\big).
\end{split}
\end{equation*}
In particular, if $f\in\LC_p(\R^{2N},\R^N)$, i.e. $f\colon\R\times\R^{2N}\to \R^N$ satisfying the assumptions in Definition~\ref{def:LC}, for every $j\in\N$ we refer to the \emph{optimal $l_1$-bound and the optimal $l_2$-bound for $f$ on $B_j\subset\R^{2N}$} as to
\begin{equation*}
\begin{split}
 l_1^{j}(t)&=\sup_{\substack{(x_1,u),(x_2,u)\in B_j\\ \\ x_1\neq x_2}}\frac{|f(t,x_1,u)-f(t,x_2,u)|}{|x_1-x_2|}\, ,\\
 l_2^j(t)&=\sup_{\substack{(x,u_1),(x,u_2)\in B_j\\ \\ u_1\neq u_2}}\frac{|f(t,x,u_1)-f(t,x,u_2)|}{|u_1-u_2|}\, .
\end{split}
\end{equation*}
If $f\in\SC_p(\R^{2N},\R^N)$ one can still define either the optimal $l_1$-bounds and/or the optimal $l_2$-bounds if, for almost every $t\in\R$, $ f$ is Lipschitz continuous with respect to the first and/or the last $N$ space variables, respectively.
\end{defn}
\begin{rmk}
Consider $f\in\LC_p(\R^{2N},\R^N)$. It is easy to prove that for all $t\in\R$ one has $l^j(t)\le l^j_1(t)+l^j_2(t)$, where by $l^j(\cdot)$ we denote the optimal $l$-bound for $f$ on $B_j$ as in \eqref{eqOptimalMLbound}.
\end{rmk}
We endow the space $\SC_p\big(\R^M,\R^N\big)$ with suitable strong and weak topologies. As a rule, when inducing a topology on a subspace we will denote the induced topology with the same symbol. Firstly we recall some integral-like topology which have been extensively used in the literature.
\begin{defn}[Topology $\T_{B}$]
We call $\T_{B}$ the topology on $\SC_p\big(\R^M,\R^N\big)$ generated by the family of seminorms
\begin{equation*}
p_{I,\, j}(f)=\sup_{x(\cdot)\in C(I,B_j)}\left[\int_I\big|f\big(t,x(t)\big)\big|^pdt \right]^{1/p},\quad f\in\SC_p\big(\R^M,\R^N\big)\, ,
\end{equation*}
where $I=[q_1,q_2]$, $q_1,q_2\in\Q$ and $j\in\N$. One has that
$\left(\SC_p\big(\R^M,\R^N\big),\T_{B}\right)$ is a locally convex metric space.
\end{defn}
\begin{defn}[Topologies $\T_D$ and $\sigma_D$]\label{def:TD}
Let $D$ be a countable and dense subset of $\R^M$. We call $\T_{D}$ (resp.  $\sigma_{D}$) the topology on $\SC_p\big(\R^M,\R^N\big)$  (resp. $\SC\big(\R^M,\R^N\big)$) generated by the family of seminorms
\begin{equation*}
p_{I,\, x}(f)=\left[\int_I|f(t,x_j)|^pdt \right]^{1/p}\qquad\left(\text{resp. }p_{I,\, x}(f)=\left|\,\int_If(t,x)\, dt\,\right|\right)
\end{equation*}
for $ f\in\SC_p\big(\R^M,\R^N\big)$ (resp. $ f\in\SC\big(\R^M,\R^N\big)$), $ x\in D,\, I=[q_1,q_2],\,  q_1,q_2\in\Q $. One has that
$\left(\SC_p\big(\R^M\big),\T_{D}\right)$ and $\left(\SC(\R^M,\R^N),\sigma_{D}\right)$ are locally convex metric spaces.
\end{defn}
Furthermore, we consider the two classes of metric topologies based on a suitable set of moduli of continuity which have been proposed in \cite{paper:LNO1} and \cite{paper:LNO2}, respectively. In order to do that, we recall the definition of suitable sets of moduli of continuity.
\begin{defn}[Suitable set of moduli of continuity]\label{def:ssmc}
We call  \emph{a suitable set of moduli of continuity}, any countable  set of non-decreasing continuous functions
\begin{equation*}
\Theta=\left\{\theta^I_j \in C(\R^+, \R^+)\mid j\in\N, \ I=[q_1,q_2], \ q_1,q_2\in\Q\right\}
\end{equation*}
such that $\theta^I_j(0)=0$ for every $\theta^I_j\in\Theta$, and  with the relation of partial order given~by
\begin{equation*}\label{def:modCont}
\theta^{I_1}_{j_1}\le\theta^{I_2}_{j_2}\quad \text{whenever } I_1\subseteq I_2 \text{ and } j_1\le j_2 \, .
\end{equation*}
\end{defn}
\begin{defn}[Topologies  $\T_{\Theta}$ and $\sigma_{\Theta}$]\label{def:TsigmaT}
Let $\Theta$ be a suitable set of moduli of continuity as in Definition~\ref{def:ssmc}, and $\K_j^I$ the compact set of functions in $C(I,B_j)$ which admit $\theta^I_j\in\Theta$ as a modulus of continuity. We call $\T_{\Theta}$ (resp. $\sigma_{\Theta}$)  the topology on $\SC_p\big(\R^M,\R^N\big)$  (resp. $\SC\big(\R^M,\R^N\big)$) generated by the family of seminorms
\begin{equation*}
p_{I,\, j}(f)=\!\!\sup_{x(\cdot)\in\K_j^I}\left[\int_I\big|f\big(t,x(t)\big)\big|^pdt \right]^{1/p} \ \left(\text{resp. }p_{I,\, j}(f)=\!\!\sup_{x(\cdot)\in\K_j^I}\left|\,\int_If\big(t,x(t)\big)\,dt\,\right|\right)
\end{equation*}
with $ f\in\SC_p\big(\R^M,\R^N\big)$ (resp. $ f\in\SC\big(\R^M,\R^N\big)$), $I=[q_1,q_2]$, $q_1,q_2\in\Q$, and $j\in\N$. One has that $\left(\SC_p\big(\R^M,\R^N\big),\T_{\Theta}\right)$ and $\left(\SC\big(\R^M,\R^N\big),\sigma_{\Theta}\right)$  are locally convex metric spaces.
\end{defn}
As follows, we introduce some new topologies on $\SC_p(\R^{2N},\R^N)$. We will  call them hybrid because they are derived from the ones presented above so that the condition of uniformity for the first $N$ space variables differ from the one of the remaining $N$ space variables.
\begin{defn}
\label{def:hybridtopologies}
Let $\Theta$ and $\smash{\widehat\Theta}$ be suitable sets of moduli of continuity as defined in Definition~\ref{def:ssmc}, $D$ be a countable dense subset of $\R^N$ and, for any $I=[q_1,q_2]$, $q_1,q_2\in\Q$ and $j\in\N$, let $\K_j^I$ and $\widehat\K_j^I$ be the sets of functions in $C(I,B_j)$ which admit $\theta^I_j$ and $\hat\theta_j^I$, respectively, as a moduli of continuity.
\begin{itemize}[leftmargin=10pt]
\item We call $\T_{\Theta D}$ (resp. $\sigma_{\Theta D}$) the topology on $\SC_p(\R^{2N},\R^N)$ (resp. $\SC(\R^{2N},\R^N)$) generated by the family of seminorms
\begin{equation*}
\begin{split}
p_{I,\, u,\, j}(f)=\sup_{x(\cdot)\in\K_j^I}\left[\int_I\big|f\big(t,x(t),u\big)\big|^pdt \right]^{1/p}  ,\\
 \Bigg(\text{resp. }p_{I,\,u,\, j}(f)=\sup_{x(\cdot)\in\K_j^I}\left|\int_If\big(t,x(t),u\big)\,dt\right|\Bigg)
\end{split}
\end{equation*}
with $f\in\SC_p(\R^{2N},\R^N)$ (resp. $f\in\SC(\R^{2N},\R^N)$), $I=[q_1,q_2]$, $q_1,q_2\in\Q$, $u\in D$ and $j\in\N$. One has that
$\left(\SC_p(\R^{2N},\R^N),\T_{\Theta D}\right)$ and $\left(\SC(\R^{2N},\R^N),\sigma_{\Theta D}\right)$ are locally convex metric spaces.\vspace{0.2cm}
\item We call $\T_{\Theta \smash{\widehat\Theta}}$ (resp. $\sigma_{\Theta \smash{\widehat\Theta}}$) the topology on $\SC_p(\R^{2N},\R^N)$ (resp. $\SC(\R^{2N},\R^N)$)  generated by the family of seminorms
\begin{equation*}
\begin{split}
 p_{I,\, j}(f)=\sup_{x(\cdot)\in\K_j^I,\,u(\cdot)\in\widehat\K_j^{I-1}}\left[\int_I\big|f\big(t,x(t),u(t-1)\big)\big|^pdt \right]^{1/p}  ,\\
\Bigg(\text{resp. }  p_{I,\, j}(f)=\sup_{x(\cdot)\in\K_j^I,\,u(\cdot)\in\widehat\K_j^{I-1}}\left|\int_If\big(t,x(t),u(t-1)\big)\,dt\right| \Bigg)
\end{split}
\end{equation*}
with  $f\in\SC_p(\R^{2N},\R^N)$ (resp. $f\in\SC(\R^{2N},\R^N)$), $I=[q_1,q_2]$, $q_1,q_2\in\Q$ and $j\in\N$. One has that
$\left(\SC_p(\R^{2N},\R^N),\T_{\Theta \smash{\widehat\Theta}}\right)$ and $\left(\SC(\R^{2N},\R^N),\sigma_{\Theta \smash{\widehat\Theta}}\right)$ are locally convex metric spaces.\vspace{0.2cm}
\item We call $\T_{\Theta B}$ the topology on $\SC_p(\R^{2N},\R^N)$ generated by the family of seminorms
\begin{equation*}
  p_{I,\, j}(f)=\sup_{x(\cdot)\in\K_j^I,\,u(\cdot)\in C(I-1, B_j)}\left[\int_I\big|f\big(t,x(t),u(t-1)\big)\big|^pdt \right]^{1/p}  \, ,
\end{equation*}
with  $f\in\SC_p(\R^{2N},\R^N)$,  $I=[q_1,q_2]$, $q_1,q_2\in\Q$ and $j\in\N$. One has that
$\left(\SC_p(\R^{2N},\R^N),\T_{\Theta B}\right)$ is a locally convex metric space.
\end{itemize}
\end{defn}
As one may notice, the way the topology $\sigma_{\Theta\smash{\widehat\Theta}}$ is defined in Definition~\ref{def:hybridtopologies}, requires that the interval on which we take the integral is the same as, or it is its translation by $1$, of the domain of the functions over which we take the supremum. However, in many cases we will need to consider the integral on a smaller subinterval that we can not directly control with the integral on the whole interval due to the employed weak formulation. Nevertheless, playing with the functions in the compact sets $\K_j^J$ and $\widehat\K_j^J$, we are still able to achieve the properties we need, as shown in the technical lemma below. Such result is the analogous of Lemma 2.13(ii) in \cite{paper:LNO2} for $\sigma_\Theta$, and since the two proofs differ only on minor details, we skip the proof.
\begin{lem}\label{lem:conv-subintHYBRID}
Let $\Theta$ and $\widehat \Theta$ be suitable sets of moduli of continuity as in {\rm Definition~\ref{def:ssmc}} and, for each $j\in\N$ and  $I=[q_1,q_2]$, $q_1,q_2\in\Q$, let $\K_j^I$ and $\widehat\K_j^I$ be the compact sets in $C(I,B_j)$  which admit $\theta^I_j\in\Theta$ and $\hat\theta_j^I\in\widehat\Theta$, respectively, as a modulus of continuity.
  If $(g_n)_\nin$ is a sequence in $\SC(\R^{2N},\R^N)$ converging to some function $g$ in $\left(\SC(\R^{2N},\R^N),\sigma_{\Theta\smash{\widehat\Theta}}\right)$, then one has that
\[ \lim_{n\to\infty}\sup_{x(\cdot)\in\K_j^I,\ u(\cdot)\in\widehat\K_j^{I-1}}\left| \int_{p_1}^{p_2} \!\!\big[g_n\big(t,x(t),u(t-1)\big)- g\big(t,x(t),u(t-1)\big)\big]\,dt\right|=0,\]
whenever $p_1$, $p_2\in \Q$ and $q_1\le p_1 < p_2\le q_2$.
\end{lem}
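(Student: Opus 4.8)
The plan is to reduce the integral over the subinterval $[p_1,p_2]$ to integrals over \emph{rational} intervals that are directly controlled by the defining seminorms of $\sigma_{\Theta\smash{\widehat\Theta}}$. Write $h_n=g_n-g$; since $\SC(\R^{2N},\R^N)$ is a vector space and $\sigma_{\Theta\smash{\widehat\Theta}}$ is generated by the countable family of seminorms $p_{J,k}$ indexed by rational intervals $J$ and by $k\in\N$, convergence $g_n\to g$ is equivalent to $p_{J,k}(h_n)\to0$ for every such $J$ and $k$. The obstruction is that, for a fixed $j$, a function $x\in\K_j^I$ restricted to $[p_1,p_2]$ admits only $\theta^I_j$ as a modulus of continuity, and since $\theta^{[p_1,p_2]}_j\le\theta^I_j$ by the partial order of Definition~\ref{def:ssmc}, the restriction need not lie in $\K_j^{[p_1,p_2]}$; hence one cannot simply invoke the seminorm attached to the smaller interval.

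To circumvent this, given $x\in\K_j^I$ and $u\in\widehat\K_j^{I-1}$, I would introduce the \emph{frozen} extensions $\tilde x\in C(I,B_j)$ and $\tilde u\in C(I-1,B_j)$, obtained by keeping $x$ (resp. $u$) on $[p_1,p_2]$ (resp. $[p_1-1,p_2-1]$) and declaring them constantly equal to the boundary values $x(p_1),x(p_2)$ (resp. $u(p_1-1),u(p_2-1)$) on the two remaining pieces. The key verification is that $\tilde x\in\K_j^I$ and $\tilde u\in\widehat\K_j^{I-1}$: for two points lying in different pieces, the argument fed to $\theta^I_j$ (resp. $\hat\theta_j^I$) only \emph{decreases} when an outer point is moved to the freezing node, so monotonicity of the moduli of continuity preserves the defining inequality. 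Together with the elementary observation that constant maps belong to every $\K^J_k$ and $\widehat\K^J_k$ (their modulus of continuity is zero), this is the crux of the argument; everything else is bookkeeping.

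With these extensions in hand, additivity of the integral over $[q_1,p_1]$, $[p_1,p_2]$, $[p_2,q_2]$ expresses $\int_{p_1}^{p_2}h_n\big(t,x(t),u(t-1)\big)\,dt$ as $\int_I h_n\big(t,\tilde x(t),\tilde u(t-1)\big)\,dt$ minus the two tail integrals, on which $\tilde x$ and $\tilde u$ are constant. The full-interval term is bounded by $p_{I,j}(h_n)$ since $\tilde x\in\K^I_j$ and $\tilde u\in\widehat\K^{I-1}_j$; each tail integral has constant spatial arguments in $B_j$, so regarding these constants as elements of the compact sets over the rational intervals $[q_1,p_1]$ and $[p_2,q_2]$ bounds them by $p_{[q_1,p_1],j}(h_n)$ and $p_{[p_2,q_2],j}(h_n)$. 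All three bounds are uniform in $x$ and $u$, whence
\[
\begin{split}
\sup_{x(\cdot)\in\K_j^I,\,u(\cdot)\in\widehat\K_j^{I-1}}&\left|\int_{p_1}^{p_2}\big[g_n-g\big]\big(t,x(t),u(t-1)\big)\,dt\right|\\
&\le p_{I,j}(h_n)+p_{[q_1,p_1],j}(h_n)+p_{[p_2,q_2],j}(h_n),
\end{split}
\]
and the right-hand side tends to $0$ as $\nti$ by $\sigma_{\Theta\smash{\widehat\Theta}}$-convergence. The degenerate cases $p_1=q_1$ or $p_2=q_2$ merely drop the corresponding tail. Thus the only genuinely delicate point is the membership of the frozen extensions in the prescribed compact sets; once that is secured, the conclusion follows from linearity of the integral and the definition of convergence.
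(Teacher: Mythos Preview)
Your argument is correct. The paper itself omits the proof of this lemma, noting only that it ``differ[s] only on minor details'' from Lemma~2.13(ii) in~\cite{paper:LNO2} for the topology $\sigma_\Theta$; the freezing-extension trick you use---extending $x$ and $u$ by constants off $[p_1,p_2]$ and $[p_1-1,p_2-1]$, verifying membership in $\K_j^I$ and $\widehat\K_j^{I-1}$ via monotonicity of the moduli, and then splitting the full-interval integral into the desired piece plus two tails with constant spatial arguments---is precisely the standard device for this type of result and is almost certainly what the referenced proof does as well.
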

The previous lemma allows to obtain a characterization of the topologies $\T_{\Theta \smash{\widehat\Theta}}$, $\sigma_{\Theta \smash{\widehat\Theta}}$ and $\T_{\Theta B}$ presented in Definition \ref{def:hybridtopologies}. The reason for proving that these same topologies can also be induced  by alternative families of seminorms is that, even though the specific way in which we defined them is particularly useful when dealing with solutions of delay differential equations with finite delay of the type~\eqref{eq:16.07-21:13}, some other topological results in the following can be simplified considerably if these alternative forms are used.
\begin{lem}\label{lem:alternative-seminorms}
Let $\Theta$ and $\smash{\widehat\Theta}$ be suitable sets of moduli of continuity as in {\rm Definition~\ref{def:ssmc}}, and $D$ be a countable dense subset of $\R^N$. With the notation used in {\rm Definition \ref{def:hybridtopologies}} the following statements hold.
\begin{itemize}[leftmargin=10pt]
\item The topology  $\sigma_{\Theta \smash{\widehat\Theta}}$ on $\SC(\R^{2N},\R^N)$ is also generated by the family of seminorms
\begin{equation}\label{eq:sigma-alternative-seminorms}
p_{I,\, j}(f)=\sup_{x(\cdot)\in\K_j^I,\,u(\cdot)\in\widehat\K_j^{I}}\left|\int_I f\big(t,x(t),u(t)\big)\,dt\right|
\end{equation}
with  $f\in\SC(\R^{2N},\R^N)$,  $I=[q_1,q_2]$, $q_1,q_2\in\Q$ and $j\in\N$. \vspace{0.2cm}
\item The topologies $\T_{\Theta \smash{\widehat\Theta}}$ and $\T_{\Theta B}$ on $\SC_p(\R^{2N},\R^N)$ are also generated by the family of seminorms
\begin{equation*}
\begin{split}
 &p_{I,\, j}(f)=\sup_{x(\cdot)\in\K_j^I,\,u(\cdot)\in\widehat\K_j^{I}}\left[\int_I\big|f\big(t,x(t),u(t)\big)\big|^pdt \right]^{1/p}  ,\\
\text{and  }   \qquad  &q_{I,\, j}(f)=\sup_{x(\cdot)\in\K_j^I,\,u(\cdot)\in C(I, B_j)}\left[\int_I\big|f\big(t,x(t),u(t)\big)\big|^pdt \right]^{1/p}  \, ,
\end{split}
\end{equation*}
respectively, with  $f\in\SC_p(\R^{2N},\R^N)$, $I=[q_1,q_2]$, $q_1,q_2\in\Q$ and $j\in\N$.
\end{itemize}
\end{lem}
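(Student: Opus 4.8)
The plan is to observe first that, after the change of variable $v(t)=u(t-1)$, each of the three seminorm families and its proposed alternative take the common ``diagonal'' shape
\[
\sup_{x(\cdot)\in\K_j^I,\ v(\cdot)\in\mathcal V}\Big[\int_I\big|f(t,x(t),v(t))\big|^p\,dt\Big]^{1/p}
\quad\text{or}\quad
\sup_{x(\cdot)\in\K_j^I,\ v(\cdot)\in\mathcal V}\Big|\int_I f(t,x(t),v(t))\,dt\Big|,
\]
where in every case $f$ is evaluated at the \emph{absolute} time $t\in I$ and only the admissible class $\mathcal V$ of the second test function changes: $\mathcal V$ consists of the $B_j$\nbd-valued maps on $I$ of modulus of continuity $\hat\theta_j^{I-1}$ for the original seminorms, and of modulus $\hat\theta_j^{I}$ for the alternative ones (for $\T_{\Theta B}$ there is no modulus constraint at all). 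Since $u\mapsto u(\cdot-1)$ is a bijection of $C(I-1,B_j)$ onto $C(I,B_j)$ preserving moduli and the integrals, for $\T_{\Theta B}$ the two families \emph{literally coincide} for each $(I,j)$, and nothing remains to prove. Thus the whole content is to pass between the moduli $\hat\theta_j^{I-1}$ and $\hat\theta_j^{I}$, which are not comparable through the partial order of Definition~\ref{def:ssmc} because neither of $I-1$, $I$ contains the other.

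The device that circumvents this is to enlarge the interval so that one modulus dominates the other. For $\T_{\Theta\smash{\widehat\Theta}}$ I would argue by mutual domination of single seminorms. Given $I=[q_1,q_2]$ and $j$, put $J=[q_1,q_2+1]$; then $I\subseteq J$ and $I\subseteq J-1$, so $\theta_j^I\le\theta_j^J$ and $\hat\theta_j^I\le\hat\theta_j^{J-1}$. Extending any $x\in\K_j^I$ and any $v$ of modulus $\hat\theta_j^I$ to $J$ by the constant value at the nearest endpoint preserves both the $B_j$\nbd-bound and the modulus, so the extensions lie in $\K_j^J$ and in $\widehat\K_j^{J-1}$; since $|f|^p\ge0$ is monotone under enlarging the integration interval, one obtains the alternative seminorm on $(I,j)$ bounded by the original seminorm on $(J,j)$. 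The reverse domination is symmetric, now enlarging to the left with $I'=[q_1-1,q_2]$, for which $I-1\subseteq I'$ and hence $\hat\theta_j^{I-1}\le\hat\theta_j^{I'}$. Because each inequality holds for \emph{every} $(I,j)$, the two families are topologically equivalent.

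The genuinely delicate case is the weak topology $\sigma_{\Theta\smash{\widehat\Theta}}$, where the integrand may change sign and the monotonicity ``$\int_I\le\int_J$'' used above fails; enlarging the interval no longer controls the integral over the original interval. This is exactly where Lemma~\ref{lem:conv-subintHYBRID} enters: I would work at the level of convergent sequences (legitimate since all the spaces are metric) and, for a sequence $f_n\to f$ in $\sigma_{\Theta\smash{\widehat\Theta}}$, apply Lemma~\ref{lem:conv-subintHYBRID} on the enlarged interval $J=[q_1,q_2+1]$ with the subinterval $[q_1,q_2]=I$. After extending the test functions as above (so that the extended pair is admissible on $J$ and agrees with $(x,v)$ on $I$), the subinterval conclusion yields that the alternative seminorm of $f_n-f$ on $(I,j)$ tends to $0$ uniformly in $(x,v)$, which is the forward implication.

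For the converse implication I would run the mirror-image argument with $I'=[q_1-1,q_2]$; this requires the analogue of Lemma~\ref{lem:conv-subintHYBRID} for the alternative (diagonal) seminorms, whose proof is word-for-word the same as that of Lemma~\ref{lem:conv-subintHYBRID}. I expect this last point---namely that the weak formulation forces us to replace plain interval-monotonicity by a subinterval lemma in \emph{both} directions---to be the main obstacle, together with the careful bookkeeping of which interval translation ($I\subseteq J-1$ on one side, $I-1\subseteq I'$ on the other) makes the relevant moduli comparable through the partial order.
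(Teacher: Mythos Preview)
Your proposal is correct and follows essentially the same strategy as the paper's proof: enlarge the interval so that the relevant moduli become comparable via the partial order of Definition~\ref{def:ssmc}, extend the test functions by constants, use plain monotonicity of $\int|f|^p$ for the strong topologies, and invoke Lemma~\ref{lem:conv-subintHYBRID} (together with its analogue for the alternative seminorms) for $\sigma_{\Theta\smash{\widehat\Theta}}$. Your upfront change of variable $v(t)=u(t-1)$, which reduces the question to comparing the modulus classes $\hat\theta_j^{I-1}$ versus $\hat\theta_j^{I}$, and your observation that for $\T_{\Theta B}$ the two seminorm families literally coincide, make the structure more transparent than in the paper, where the translation is carried out mid-proof and the $\T_{\Theta B}$ case is only alluded to; but the underlying mechanism is the same.
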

\begin{proof}
Let $\Theta$ and $\smash{\widehat\Theta}$ be suitable sets of moduli of continuity and consider a sequence $(f_n)_\nin$  in $\SC(\R^{2N},\R^N)$ converging to some $f\in\SC(\R^{2N},\R^N)$ with respect to the topology $\sigma_{\Theta\smash{\widehat\Theta}}$. We shall prove that $(f_n)_\nin$ converges to $f$ with respect to the topology generated by the family of seminorms in \eqref{eq:sigma-alternative-seminorms}. Let us fix $j\in\N$ and any interval $I$ with rational extrema, and consider another interval $J$ with rational extrema such that  $I\cup(I-1)\subset J-1$. Let us extend  the functions $x(\cdot)\in \K^I_j$ by constants to $J$, and up to changing any function $u(\cdot)\in \widehat\K^I_j$ for its translation $\tilde u:I-1\to B_j\subset \R^N$ so that $\tilde u(t)=u(t+1)$ for all $t\in I-1$, let us also extend  such  functions $\widetilde u$ by constant to $J-1$. Then, one has that
\begin{align}\label{eq:09.11-11:42}
\begin{split}
&\sup_{x(\cdot)\in\K^I_{j},\  u(\cdot)\in\widehat\K^I_{j}}\bigg|\int_I[f_n\big(t,x(t),u(t)\big)- f\big(t,x(t),u(t)\big)\big]\,dt\bigg|\\
&\qquad\le\sup_{x(\cdot)\in\K^J_{j},\ u(\cdot)\in\widehat\K^{J-1}_{j}}\bigg|\int_I[f_n\big(t,x(t),u(t-1)\big)- f\big(t,x(t),u(t-1)\big)\big]\,dt\bigg|\,.\\
\end{split}
\end{align}
Now, by assumption we have that
\begin{equation*}
\sup_{x(\cdot)\in\K^J_j,\ u(\cdot)\in\widehat\K^{J-1}_j}\bigg|\int_J[f_n\big(t,x(t),u(t-1)\big)- f\big(t,x(t),u(t-1)\big)\big]\,dt\bigg|\xrightarrow{\nti}0,
\end{equation*}
and thus, passing to the limit as $\nti$ in \eqref{eq:09.11-11:42}, we obtain the result thanks to Lemma~\ref{lem:conv-subintHYBRID} and recalling that $I\subset J$.
\par\smallskip
On the other hand, if $(f_n)_\nin$ is a sequence in $\SC(\R^{2N},\R^N)$ converging to some $f\in\SC(\R^{2N},\R^N)$ with respect to the topology generated by the family of seminorms in \eqref{eq:sigma-alternative-seminorms}, then fixed $j\in\N$ and any interval $I$ with rational extrema, one can consider an interval $J$ with rational extrema such that $I\cup(I-1)\subset J$ and thus, up to extending the functions in $\K^I_{j}$ and $\widehat\K^{I-1}_{j}$ by constants to $J$ one has
\begin{align*}
&\sup_{x(\cdot)\in\K^I_{j},\ u(\cdot)\in\widehat\K^{I-1}_{j}}\bigg|\int_I[f_n\big(t,x(t),u(t-1)\big)- f\big(t,x(t),u(t-1)\big)\big]\,dt\bigg|\\
&\qquad\qquad \le\sup_{x(\cdot)\in\K^J_{j},\  u(\cdot)\in\widehat\K^J_{j}}\bigg|\int_I[f_n\big(t,x(t),u(t)\big)- f\big(t,x(t),u(t)\big)\big]\,dt\bigg|
\end{align*}
Therefore, since a result analogous to  Lemma~\ref{lem:conv-subintHYBRID} holds also for the convergence with respect to the topology generated by the family of seminorms in \eqref{eq:sigma-alternative-seminorms}, one obtains that, by taking the limit as $\nti$, $(f_n)_\nin$ converges to  $f$ with respect to the  topology $\sigma_{\Theta\widehat\Theta}$.
\par
\smallskip
As regards the topologies $\T_{\Theta \smash{\widehat\Theta}}$ and $\T_{\Theta B}$, one can simplify the previous reasoning, due to the fact that the integral on any interval can be directly controlled by above with an integral on a larger interval.
\end{proof}
A natural question arises, concerning the relation between the topologies $\T_{\Theta}$ and $\T_{\Theta\Theta}$ (resp. $\sigma_{\Theta}$ and $\sigma_{\Theta\Theta}$) on $\SC_p(\R^{2N},\R^N)$ (resp. $\SC(\R^{2N},\R^N)$). The following proposition uses the previous lemma to obtain a relation of order.
\begin{prop}\label{prop:Theta=ThetaTheta}
Let $\Theta$ be a suitable set of moduli of continuity as in {\rm Definition~\ref{def:ssmc}}. Considered the topologies $\T_\Theta$ and  $\T_{\Theta\Theta}$ on $\SC_p(\R^{2N},\R^N)$, and, $\sigma_\Theta$ and $\sigma_{\Theta\Theta}$ on $\SC(\R^{2N},\R^N)$  (see {\rm Definitions~\ref{def:TsigmaT}} and {\rm~\ref{def:hybridtopologies}})  the following order relations hold:
\begin{equation*}
\T_{\Theta}\le  \T_{\Theta\Theta}\qquad\text{and}\qquad \sigma_{\Theta}\le \sigma_{\Theta\Theta}.
\end{equation*}
\end{prop}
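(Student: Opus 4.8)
The plan is to compare the two pairs of topologies seminorm by seminorm. Since $(\SC_p(\R^{2N},\R^N),\T_\Theta)$, $(\SC_p(\R^{2N},\R^N),\T_{\Theta\Theta})$ and their weak analogues are locally convex metric spaces generated by countable families of seminorms, it suffices to show that each generating seminorm of $\T_\Theta$ (resp.\ $\sigma_\Theta$) is dominated by the corresponding seminorm of $\T_{\Theta\Theta}$ (resp.\ $\sigma_{\Theta\Theta}$). The geometric heart of the matter is that a test function $w(\cdot)\in C(I,B_j)$ with $B_j\subset\R^{2N}$ splits as $w=(x,u)$ into its first and last $N$ coordinates, and that this coordinate projection is non-expansive.

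First I would invoke Lemma~\ref{lem:alternative-seminorms} with $\smash{\widehat\Theta}=\Theta$ (so that $\smash{\widehat\K}_j^I=\K_j^I$) to replace the defining seminorms of $\T_{\Theta\Theta}$ and $\sigma_{\Theta\Theta}$ by the equivalent ones in which $x(\cdot)$ and $u(\cdot)$ both range over $\K_j^I$ and the integral is taken over the common interval $I$ with argument $u(t)$ rather than $u(t-1)$. This puts the two families of seminorms on the same footing, so that only the admissible sets of test functions remain to be compared. Throughout I would keep in mind that the symbol $\K_j^I$ denotes compact sets living in $C(I,B_j)$ with $B_j$ sitting in two different Euclidean spaces: $\R^{2N}$ for $\T_\Theta$ and $\R^N$ for $\T_{\Theta\Theta}$.

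Next I would establish the containment of test-function sets. Fix $j\in\N$ and $I=[q_1,q_2]$ with $q_1,q_2\in\Q$, and let $w(\cdot)\in\K_j^I\subset C(I,B_j)$ with $B_j\subset\R^{2N}$, written $w(t)=(x(t),u(t))$. From $|x(t)|,|u(t)|\le|w(t)|\le j$ one gets $x,u\in C(I,B_j)$ with $B_j\subset\R^N$, and from $|x(t_1)-x(t_2)|,|u(t_1)-u(t_2)|\le|w(t_1)-w(t_2)|\le\theta_j^I(|t_1-t_2|)$ one gets that both $x$ and $u$ admit $\theta_j^I$ as a modulus of continuity; hence $x,u\in\K_j^I\subset C(I,B_j)$ with $B_j\subset\R^N$. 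Thus every $w=(x,u)$ in the $\R^{2N}$-version of $\K_j^I$ yields an admissible test pair $(x,u)$ for the $\Theta\Theta$ seminorm, while the integrands agree, $f(t,w(t))=f(t,x(t),u(t))$. Consequently the supremum defining the $\T_\Theta$ (resp.\ $\sigma_\Theta$) seminorm is taken over a subset of the pairs occurring in the $\T_{\Theta\Theta}$ (resp.\ $\sigma_{\Theta\Theta}$) seminorm, which gives $p_{I,j}^{\T_\Theta}(f)\le p_{I,j}^{\T_{\Theta\Theta}}(f)$ and $p_{I,j}^{\sigma_\Theta}(f)\le p_{I,j}^{\sigma_{\Theta\Theta}}(f)$ for all $I$ and $j$, and therefore $\T_\Theta\le\T_{\Theta\Theta}$ and $\sigma_\Theta\le\sigma_{\Theta\Theta}$.

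Rather than a genuine obstacle, the one point demanding care is exactly this dimensional mismatch: the ball $B_j\subset\R^{2N}$ is strictly smaller than the square $\{(x,u):x,u\in B_j\subset\R^N\}$, and imposing a single modulus of continuity on $w$ is strictly stronger than imposing $\theta_j^I$ on $x$ and $u$ separately. These are precisely the two features that make the inclusion of test-function sets proper, and hence both make the inequality of seminorms work and explain why one expects the order to be strict in general. I would therefore spell out the projection argument explicitly, so that both the pointwise bound and the modulus estimate are seen to pass to the coordinates.
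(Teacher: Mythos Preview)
Your proposal is correct and follows essentially the same route as the paper: both invoke Lemma~\ref{lem:alternative-seminorms} to rewrite the $\Theta\Theta$ seminorms with argument $u(t)$ on the common interval $I$, and then use the coordinate-projection observation that any $w=(x,u)\in\K_{j}^{I}\subset C(I,B_j\subset\R^{2N})$ yields $x,u\in\K_j^I\subset C(I,B_j\subset\R^N)$. The only cosmetic difference is that the paper phrases the comparison via sequences (showing $f_n\xrightarrow{\sigma_{\Theta\Theta}}f$ implies $f_n\xrightarrow{\sigma_\Theta}f$), whereas you compare the generating seminorms directly; the underlying inequality and the use of Lemma~\ref{lem:alternative-seminorms} are identical.
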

\begin{proof}
We will complete the proof for the weak topologies because the other one is analogous (and simpler). Furthermore, in order to avoid any abuse of notation, within this proof we will write
\begin{align*}
\K^I_{j,2N}&:=\{\xi\colon I\to B_j\subset\R^{2N}\mid |\xi(t)-\xi(s)|\le\theta^I_j (|t-s|),\text{ for all }t,s\in I\},\\[1ex]
\K^I_{j,N}&:=\{\eta\colon I\to B_j\subset\R^{N}\mid |\eta(t)-\eta(s)|\le\theta^I_j (|t-s|),\text{ for all }t,s\in I\}.
\end{align*}
Let $(f_n)_\nin$ be a sequence  in $\SC(\R^{2N},\R^N)$ converging to some $f\in\SC(\R^{2N},\R^N)$ with respect to the topology $\sigma_{\Theta\Theta}$ and prove that one also has $f_n\xrightarrow{\sigma_{\Theta}}f$ as $\nti$. To the aim, fix $I=[q_1,q_2]$, $q_1,q_2\in\Q$, and $j\in\N$. Then, one has
\begin{align*}\label{eq:20.06-20:39}
\begin{split}
&\!\!\!\!\!\sup_{(x(\cdot),u(\cdot))\in\K^I_{j,2N}}\bigg|\int_I\big[f_n\big(t,x(t),u(t)\big)- f\big(t,x(t),u(t)\big)\big]\,dt\bigg|\\
&\quad\ \ \ \le\!\sup_{x(\cdot)\in\K^I_{j,N},\  u(\cdot)\in\K^I_{j,N}}\bigg|\int_I[f_n\big(t,x(t),u(t)\big)- f\big(t,x(t),u(t)\big)\big]\,dt\bigg|\,.
\end{split}
\end{align*}
Now, by assumption and thanks to Lemma~\ref{lem:alternative-seminorms} we have that the right-hand side of the previous inequality goes to zero as $\nti$, which ends the proof.
\end{proof}
\begin{rmk}\label{rmk:DelayTopologiesChain}
Consider any dense and countable set $D\subset\R^M$, and any pair $\Theta$ and $\smash{\widehat\Theta}$ of suitable sets of moduli of continuity as in Definition~\ref{def:ssmc}, such that for any $I=[q_1,q_2]$, $q_1,q_2\in\Q$ and $j\in\N$ one has
\begin{equation*}
\theta^I_j(t)\le\hat\theta^I_j(t),\qquad\text{for all }t\in[0,\infty).
\end{equation*}
Then, one can draw the following chains of order:
\begin{align}\label{eq:DelayTopologiesChain}
\begin{split}
\sigma_D\le\T_{D}&\le\T_{\Theta D}\le\T_{\Theta}\le\T_{\Theta\Theta}\le\T_{\Theta \smash{\widehat\Theta}}\le\T_{\Theta B}\le\T_{B}\quad\text{and}\\[0.5ex]
&\sigma_D\le\sigma_{\Theta D}\le \sigma_{\Theta}\le\sigma_{\Theta\Theta}\le \sigma_{\Theta \smash{\widehat\Theta}}\le\T_{\Theta \smash{\widehat\Theta}},
\end{split}
\end{align}
where, in particular, the order relations $\, \T_{\Theta}\le\T_{\Theta\Theta}\, $ and $\, \sigma_{\Theta}\le\sigma_{\Theta\Theta}\, $ hold true thanks to Proposition~\ref{prop:Theta=ThetaTheta}.  Clearly, one might expand the previous chains of inequalities (or generate new branches) by considering more suitable sets of moduli of continuity (satisfying appropriate relations of partial order) and/or different dense countable subsets of $\R^N$, and the corresponding induced topologies.
\end{rmk}
The choice of the topological space will deeply affect the construction of the relative skew-product semiflow. In particular, let us recall the definition of hull  for a function.
\begin{defn}[Hull of a function]\label{def:Hull}
Let $(E,d)$ be a metric space of functions mapping $\R\times X$ onto $X$, where $X$ is a metric space, and let $\T$ be the topology induced by the metric. If $f\in E$, and for any $t\in\R$ also $f_t\in E$, where $f_t$ is the time translation at time $t$ of $f$, i.e. the function
\begin{equation}\label{eq:time-translations}
f_t\colon\R\times X\to X,\qquad(s,x)\mapsto f_t(s,x)=f(s+t,x),
\end{equation}
then we call  \emph{the hull of $f$ with respect to $(E,\T)$}, the metric subspace of  $(E,\T)$ defined~by
\begin{equation*}
\mathrm{Hull}_{(E,\T)}(f)=\big(\mathrm{cls}_{(E,\T)}\{f_t\mid t\in\R\} ,\, \T\big)\, ,
\end{equation*}
where, $\mathrm{cls}_{(E,\T)}(A)$ represents the closure in $(E,\T)$ of the set $A$ and $\T$ is the induced topology.
\end{defn}
In the last part of the section, we recall the notions of $L^1_{loc}$-equicontinuity and $L^p_{loc}$-boundedness, relate them to Carath\'eodory functions and prove some results on the previously outlined topological spaces once such properties are assumed to hold.
A subset $S$ of positive functions in $L^p_{loc}$  is bounded if for every $r>0$ the following inequality holds
\begin{equation*}
\sup_{m\in S}\int_{-r}^r \big(m(t)\big)^p\,  dt<\infty\,.
\end{equation*}
In such a case we will say that $S$ is $L^p_{loc}$-bounded.
\begin{defn}\label{weakcomp}
A set $S$ of positive functions in $L^1_{loc}$ \emph{is $L^1_{loc}$-equicontinuous} if for any $r>0$ and for any $\ep>0$ there exists a $\delta=\delta(r,\ep)>0$ such that, for any $-r\le s\le t\le r$, with $t-s<\delta$, the following inequality holds
\begin{equation*}
\sup_{m\in S}\int_{s}^t m(u)\,du<\ep\, .
\end{equation*}
\end{defn}
\begin{rmk}\label{rmk:equicnt=>bound}
Notice that the $L^1_{loc}$-equicontinuity implies the $L^1_{loc}$-boundedness. On the other hand, due to H\"older inequality, if $p>1$ the $L^p_{loc}$-boundedness implies  the $L^1_{loc}$-equicontinuity.
\end{rmk}
The following definition extends the notions of $L^1_{loc}$-equicontinuity and $L^p_{loc}$-boundedness to sets of Carath\'eodory functions through their $m$-bounds, $l$-bounds, and/or $l_2$-bounds. Recall that, by time translation at time $t$ of a function $f$, we mean the application $f_t\colon\R\times\R^M\to\R^N$ defined  in \eqref{eq:time-translations}.
\begin{defn}
We say that
\begin{itemize}[leftmargin=20pt]\setlength\itemsep{0.4em}
\item[(i)] a set $E\subset\SC_p(\R^M,\R^N)$ \emph{admits $L^p_{loc}$-bounded (resp. $L^1_{loc}$-equicontinuous)} $m$\nbd-bounds, if for any $j\in\N$ the set $S^j\subset L^p_{loc}$, made of the optimal $m$-bounds on $B_j$  of the functions in $E$, is $L^p_{loc}$\nbd-bounded (resp. $L^1_{loc}$-equicontinuous);
\item[(ii)] \emph{$f\in\SC_p(\R^M,\R^N)$ has $L^p_{loc}$-bounded (resp. $L^1_{loc}$-equicontinuous) $m$-bounds} if the set $\{f_t\mid t\in\R\}$ admits  $L^p_{loc}$-bounded (resp. $L^1_{loc}$-equicontinuous) $m$-bounds;
\item[(iii)]a set $E\subset\LC_p(\R^M,\R^N)$  \emph{admits  $L^p_{loc}$-bounded  (resp. $L^1_{loc}$-equicontinuous)} $l$\nbd-bounds, if for any $j\in\N$, the set $S^j\subset L^p_{loc}$, made of the optimal $l$-bounds on $B_j$  of the functions in $E$,  is  $L^p_{loc}$-bounded  (resp. $L^1_{loc}$-equicontinuous);
\item[(iv)] $f\in\LC_p(\R^M,\R^N)$  \emph{has $L^p_{loc}$-bounded  (resp. $L^1_{loc}$-equicontinuous) $l$-bounds} if  the set $\{f_t\mid t\in\R\}$  has $L^p_{loc}$-bounded (resp. $L^1_{loc}$-equicontinuous) $l$-bounds;
\item[(v)]a set $E\subset\SC_p(\R^M,\R^N)$  \emph{admits  $L^p_{loc}$-bounded  (resp. $L^1_{loc}$-equicontinuous) $l_i$\nbd-bounds}, with $i\in\{1,2\}$, if for any $j\in\N$, the set $S^j\subset L^p_{loc}$, made  of the optimal $l_i$-bounds on $B_j$  of the functions in $E$,  is  $L^p_{loc}$-bounded  (resp. $L^1_{loc}$-equicontinuous);
\item[(vi)] $f\in\SC_p(\R^M,\R^N)$  \emph{has $L^p_{loc}$-bounded  (resp. $L^1_{loc}$-equicontinuous) $l_i$-bounds}, with $i\in\{1,2\}$, if  the set $\{f_t\mid t\in\R\}$  has $L^p_{loc}$-bounded (resp. $L^1_{loc}$-equiconti\-nuous) $l_i$-bounds.
\end{itemize}
\label{def:05.07-13:05}
\end{defn}
As follows, we prove that the properties defined above are propagated through the limits in the considered topologies. The proof of this proposition employs  arguments used in some results of Section 4 of \cite{paper:LNO1}, yet we include a full proof for clarity and because some formulas will be useful in the following.
\begin{prop}\label{prop:07.07-19:44p=1}
Let $E$ be a subset of $ \SC_p\big(\R^{M},\R^N\big)$ with $L^p_{loc}$-bounded (resp. $L^1_{loc}$-equicontinuous) $m$-bounds.  Then $\mathrm{cls}_{(\SC_p(\R^{M},\R^N),\T)}(E)$ admits $L^p_{loc}$-bounded  (resp. $L^1_{loc}$-equicontinuous) $m$-bounds, where $\T$ is either, any of the topologies in \eqref{eq:DelayTopologiesChain} if $p=1$, or any of the strong topologies in \eqref{eq:DelayTopologiesChain} if $p>1$. The same statement is true for the $l$-bounds and, if $M=2N$, also for the $l_1$-bounds  and the $l_2$-bounds.
\end{prop}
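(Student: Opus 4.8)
The plan is to reduce the statement to the weakest topology in each regime and then to prove a lower-semicontinuity inequality for the integrals of the optimal bounds. Note first that admitting $L^p_{loc}$-bounded (resp. $L^1_{loc}$-equicontinuous) bounds is a property inherited by subsets, since it is phrased through a supremum over the set. As $\mathrm{cls}_{(\SC_p,\T')}(E)\subseteq\mathrm{cls}_{(\SC_p,\T)}(E)$ whenever $\T\le\T'$, the chain \eqref{eq:DelayTopologiesChain} shows that it suffices to argue for the weakest admissible topology in each case: $\sigma_D$ when $p=1$, and $\T_D$ (the weakest strong topology) when $p>1$. Since all the spaces are metric, closures are sequential; I therefore fix $g$ in the closure and a sequence $(f_n)_\nin\subset E$ with $f_n\to g$, and I aim to bound $\int_J (m^{B_j}_g)^p$ on intervals $J$ uniformly by the corresponding supremum over $E$, where $m^{B_j}_f$ denotes the optimal $m$-bound of $f$ on $B_j$.

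First I would pass from topological convergence to convergence of the slices $f_n(\cdot,x)$ for $x\in D$. Since $|f_n(\cdot,x)|\le m^{B_j}_{f_n}(\cdot)$ for $x\in B_j$, the hypothesis on $E$ makes $\{f_n(\cdot,x)\}_\nin$ $L^1_{loc}$-equicontinuous and bounded (resp. $L^p_{loc}$-bounded). When $p=1$, $\sigma_D$-convergence gives $\int_{I'}f_n(\cdot,x)\to\int_{I'}g(\cdot,x)$ over all rational subintervals $I'$; combined with uniform integrability, the Dunford--Pettis theorem and the identification of every weak limit point with $g(\cdot,x)$ yield $f_n(\cdot,x)\rightharpoonup g(\cdot,x)$ weakly in $L^1(I)$ on each compact $I$, for every $x\in D\cap B_j$. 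When $p>1$, $\T_D$-convergence directly gives $f_n(\cdot,x)\to g(\cdot,x)$ strongly in $L^p(I)$.

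The crux is the following estimate, for any finite set $\{x_1,\dots,x_k\}\subset D\cap B_j$ and any subinterval $J=[s,t]$:
\[
\int_J \max_{i\le k}\bigl|g(u,x_i)\bigr|^p\,du \le \liminf_{\nti}\int_J \max_{i\le k}\bigl|f_n(u,x_i)\bigr|^p\,du \le \sup_{f\in E}\int_J \bigl(m^{B_j}_f(u)\bigr)^p\,du .
\]
The second inequality is immediate from $\max_{i\le k}|f_n(u,x_i)|\le m^{B_j}_{f_n}(u)$. The first is the heart of the matter: in the strong case ($p>1$) it follows from Fatou's lemma after extracting a subsequence with $f_n(\cdot,x_i)\to g(\cdot,x_i)$ a.e.\ for the finitely many indices; in the weak case ($p=1$) there is no a.e.\ convergence, and instead I would use that the functional $G\mapsto\int_J\max_{i\le k}|G_i|$ on $L^1(J,\R^{Nk})$ is convex and Lipschitz, hence weakly lower semicontinuous by Mazur's lemma, applied to $G_n=(f_n(\cdot,x_1),\dots,f_n(\cdot,x_k))\rightharpoonup(g(\cdot,x_1),\dots,g(\cdot,x_k))$. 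Enumerating $D\cap B_j$, letting $k\to\infty$, and using monotone convergence together with the a.e.\ continuity of $g(u,\cdot)$ (so that $m^{B_j}_g(u)=\sup_{x\in D\cap B_j}|g(u,x)|$), I obtain $\int_J (m^{B_j}_g)^p\le\sup_{f\in E}\int_J (m^{B_j}_f)^p$. Choosing $J=[-r,r]$ gives the boundedness, and choosing $|J|<\delta$ transfers the equicontinuity with the \emph{same} $\delta$; since the right-hand bound is uniform over $E$, it controls every element of the closure at once.

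For the $l$-bounds, and for the $l_1$- and $l_2$-bounds when $M=2N$, the same scheme applies with only notational changes: one replaces $|f(u,x)|$ by the difference quotient $|f(u,x_1)-f(u,x_2)|/|x_1-x_2|$ evaluated over finitely many pairs taken from a countable dense family of pairs in $B_j$ (for $l_1$ and $l_2$, pairs sharing the appropriate block of $N$ coordinates). For each fixed pair this quantity is a Lipschitz, convex function of the finitely many evaluated values of $f$, so the lower-semicontinuity inequality and the monotone-convergence passage go through verbatim. The main obstacle throughout is precisely this first inequality: the topologies only constrain $f$ at individual points of $D$, whereas the optimal bounds are suprema over a continuum; in the weak regime, where a.e.\ convergence is unavailable, it is the convexity of the relevant functional together with Mazur's lemma that bridges this gap.
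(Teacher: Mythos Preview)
Your reduction to the weakest topology and the lower-semicontinuity strategy are sound, and where they apply your argument is correct and cleaner than the paper's. However, there is a genuine gap: the case $p=1$ under the \emph{boundedness} hypothesis (i.e.\ $E$ with merely $L^1_{loc}$-bounded, not $L^1_{loc}$-equicontinuous, $m$-bounds) is not covered. Your Dunford--Pettis step explicitly invokes uniform integrability of $\{f_n(\cdot,x)\}_\nin$, and uniform integrability is precisely $L^1_{loc}$-equicontinuity of the $m$-bounds; under boundedness alone it is unavailable, and without it $\sigma_D$-convergence does not upgrade to weak $L^1$-convergence of the slices (mass can concentrate). Consequently the Mazur step for the key inequality $\int_J\max_i|g(\cdot,x_i)|\le\liminf_n\int_J\max_i|f_n(\cdot,x_i)|$ is not justified in this regime, and no alternative is offered.

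The paper handles this missing case by working one level up, with the positive measures $\mu^j_n$ having densities $m^j_n$: vague relative compactness needs only $L^1_{loc}$-boundedness, and the density of the absolutely continuous part of a vague limit (via Lebesgue--Besicovitch differentiation) is shown to be an $m$-bound for the limit function. Your scheme can be repaired analogously by replacing weak $L^1$-convergence with weak-$*$ convergence of $(f_n(\cdot,x_1),\dots,f_n(\cdot,x_k))\,dt$ as $\R^{Nk}$-valued Radon measures and invoking lower semicontinuity of the total variation under weak-$*$ convergence in place of Mazur. For the cases you do cover (the equicontinuous hypothesis when $p=1$, and both hypotheses when $p>1$), your route is genuinely different from the paper's and has the pleasant feature of producing the sharp inequality $\int_J(m^{B_j}_g)^p\le\sup_{f\in E}\int_J(m^{B_j}_f)^p$ directly for the \emph{optimal} $m$-bound of $g$, rather than for an auxiliary bound constructed through a limiting measure.
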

\begin{proof}
Let us firstly consider $p=1$ and the case of  $L^1_{loc}$-bounded $m$-bounds. By \eqref{eq:DelayTopologiesChain}, if we prove the result for the topology $\sigma_D$, where $D$ is any countable dense subset of $\R^M$, we have it for all the other topologies. Let us firstly reason for the $m$-bounds and for $p=1$. Moreover, in order to simplify the notation, let $\overline E$ denote the set $\mathrm{cls}_{(\SC(\R^M,\R^N),\sigma_D)}(E)$. As we are applying the topological closure in $(\SC(\R^M,\R^N),\sigma_D)$, we already know that each function  in $\overline E$ admits an optimal $m$-bound. However, without any additional piece of  information, it is difficult to say whether the optimal $m$-bounds of the limit functions allow us to preserve the property of $L^1_{loc}$-boundedness in $\overline E$ or not.
\par
\smallskip
Fix $j\in\N$ and, for any $\nin$, let $m^j_n$ be the optimal $m$-bound for $f_n$ on $B_j$ and  $\mu^j_n$ be the positive absolutely continuous measure (with respect to the Lebesgue measure) with density $m^j_n(\cdot)$. By hypothesis, the set $\{ m^j_n(\cdot)\mid\nin\}$ is $L^1_{loc}$-bounded. Hence, due to Theorems 15.7.5 in Kallenberg~\cite{book:Kall}, the sequence of induced measures $(\mu^j_n)_\nin$,  is relatively compact in $(\M^+,\widetilde{\sigma})$ (set of positive and regular Borel measures on $\R$ endowed with the vague topology; see  \cite {book:Kall} for more information), and thus it vaguely converges, up to a subsequence, to a measure $\mu^j\in\M^+$, i.e.
\begin{equation*}
 \lim_{\nti}\int_\R \phi(s)\, d\mu_n(s)=\int_\R\phi(s)\, d\mu(s)\qquad \text{for all } \phi\in C^+_C(\R).
\end{equation*}
Moreover, by Lebesgue-Besicovitch differentiation theorem, there exists $m^j(\cdot)\in L^1_{loc}$ such that
\begin{equation}
m^j(t)=\lim_{h\to0}\frac{\mu^j([t,t+h])}{h}\, , \qquad \mathrm{for\ a.e.} \ t\in\R\, ,
\label{eq:19.05-12:37}
\end{equation}
and $m^j(\cdot)$ is the density of the absolutely continuous part of the Radon-Nikod\'ym decomposition of $\mu^j$ in each compact interval. We claim that $m^j(\cdot)$ is an $m$-bound for $f$ on $B_j$. Let us firstly fix $x\in D\cap B_j$, and take $t,h\in\Q$, with $h>0$, and $\phi\in C_C^+(\R)$ such that $\phi\equiv 1$ in $[t,t+h]$. Then, we have
\begin{equation*}
\begin{split}
\bigg|\frac{1}{h}\int_{t}^{t+h}f(s,x)\, ds\,\bigg|&=\lim_{\nti}\bigg|\frac{1}{h}\int_{t}^{t+h}f_n(s,x)\, ds\,\bigg|\le\lim_{\nti}\frac{1}{h}\int_{t}^{t+h}m^j_n(s)\, ds \\
&\le \lim_{\nti}\frac{1}{h}\int_\R\phi(s)\, d\mu^j_n(s)=\frac{1}{h}\int_\R\phi(s)\, d\mu^j(s)\,.
\end{split}
\end{equation*}
Moreover, thanks to the regularity of $\mu^j$, one has
\begin{equation*}
 \mu^j([t,t+h])=\inf\left\{\int_\R\phi(s)\, d\mu^j(s)\;\Big|\; \phi\in C_C^+(\R),\;  \phi\equiv 1\; \text{in } [t,t+h]\right\}.
\end{equation*}
Therefore, from the previous two formulas, we obtain
\begin{equation}\label{eq:27-06_12:23}
\bigg|\frac{1}{h}\int_{t}^{t+h}f(s,x)\, ds\,\bigg|\le\frac{\mu^j([t,t+h])}{h}\,.
\end{equation}
Now, consider $t,h\in\R$, with $h>0$, and let $(s_n)_\nin$ and $(t_n)_\nin$ be two sequences in $\Q$ such that, as $\nti$, $s_n\downarrow t$ and $t_n\uparrow t+h$, respectively. By \eqref{eq:27-06_12:23}, applied on the intervals $[s_n,t_n]$, and noticing that $\mu^j([s_n,t_n)]\le \mu^j([t,t+h])$ for every $\nin$, one can write
\begin{equation*}
\left|\,\frac{1}{h}\int_{s_n}^{t_n}f(s,x)\, ds\,\right|\le\frac{\mu^j([t,t+h])}{h}\,,\quad\text{for all }\nin\,.
\label{eq:13-07_14:05}
\end{equation*}
Hence, passing to the limit as $\nti$ and using the continuity of the integral, one obtains \eqref{eq:27-06_12:23} for every $t,h\in\R$ with $h>0$. Now, as $h\to0$ (see Dunford and Schwartz \cite[Corollary III.12.7, p.216]{book:DS}) and using \eqref{eq:19.05-12:37}, we obtain that for almost every $t\in\R$,
\begin{equation}
|f(t,x)|\le m^j(t)\,,
\label{eq:21.05-12:41}
\end{equation}
for the fixed $x\in D\cap B_j$. For every fixed $x\in D\cap B_j$ let us now denote by $R(x)$ the subset of $\R$ such that $\meas_\R(\R\setminus R(x))=0$ and \eqref{eq:21.05-12:41}  holds for all $t\in R(x)$. Such a set clearly depends on $x\in D\cap B_j$. However, since $D$ is numerable, by simply intersecting all the possible $R(x)$, with $x\in D\cap B_j$,  one can obtain a set $R_0\subset\R$ of full measure for which \eqref{eq:21.05-12:41}  holds for any $x\in D\cap B_j$. Finally, by the continuity of $f(t,\cdot)$, we obtain the result for almost every $t\in\R$ for all $x\in B_j$, and $m^j$ provides an $m$-bound for $f$ in $B_j$, as claimed.\par\smallskip
Now, we prove that $\overline E$ admits $L^1_{loc}$-bounded $m$-bounds. For each $f\in \overline E$ for each $f\in \overline E$, let $(f_n)_\nin$ be a sequence in $E$ converging to $f$ with respect to $\sigma_D$ and for any $j\in\N$, let $m_f^j$ be either, the optimal $m$-bound of $f$ on $B_j$ if $f\in E$, or the $m$-bound given by \eqref{eq:19.05-12:37}, i.e. the absolutely continuous part of a limit measure through sequence of the $m$-bounds of $(f_n)_\nin$, if $f\in \overline E\setminus E$.
\par\smallskip
Consider $j\in\N$, $r,\delta>0$ and $\phi\in C^+_C$ such that $\supp \phi\subset [-r-\delta,r+\delta]$ and $\phi\equiv 1$ in $[-r,r]$, then, we have
\begin{equation}\label{eq:boundedMbounds}
\begin{split}
\int_{-r}^r m_f^j(t)\, dt&\le\int_\R\phi(t)\,  m_f^j(t)\, dt\le\lim_\nti \int_\R \phi(t)\, m_{f_n}^{j}(t)\, dt\\
&\le\sup_{g\in E}\int_{-r-\delta}^{r+\delta} \, m_{g}^{j}(t)\, dt\, ,
\end{split}
\end{equation}
where the last inequality comes from the assumption of  $L^1_{loc}$-boundedness for the $m$-bounds of $E$. The chain of inequalities in \eqref{eq:boundedMbounds} already allows to prove that $\overline E$ admits $L^1_{loc}$-bounded $m$-bounds, but a qualitative refinement is actually possible. By the continuity of the Lebesgue integral, for any $f\in \overline E$, $r>0$  and $\ep>0$ there exists $\delta=\delta(f,r,\ep)>0$ such that
\begin{equation*}
\int_{-r}^r m_f^j(t)\, dt\le\ep+\int_{-r+\delta}^{r-\delta} m_f^j(t)\, dt\le \ep+\sup_{g\in E}\int_{-r}^{r} \, m_{g}^{j}(t)\, dt,
\end{equation*}
where the last inequality is achieved using \eqref{eq:boundedMbounds}. Then, by the arbitrariness of $\ep$ first  and  by  taking the superior on $f\in\overline E$ on both sides afterwards, one actually obtains that, not only $\overline E$ has $L^1_{loc}$-bounded $m$-bounds, but also that, fixed any interval $[-r,r]$, the constant provided by the $L^1_{loc}$-boundedness of the $m$-bounds of $E$ also applies to the $m$-bounds of $\overline E$, i.e.
\begin{equation}\label{eq:boundedMbounds-sameCONSTANT}
\int_{-r}^r m_f^j(t)\, dt\le \sup_{g\in E}\int_{-r}^{r} \, m_{g}^{j}(t)\, dt.
\end{equation}
\par\smallskip
Let us now assume that $E$ has $L^1_{loc}$-equicontinuous $m$-bounds. As before, for each $f\in\overline E$, let $(f_n)_\nin$ be a sequence in $E$ converging to $f$ with respect to $\sigma_D$ and for any $j\in\N$ let $m_f^j$ be either, the optimal $m$-bound of $f$ on $B_j$ if $f\in E$, or the $m$-bound given by \eqref{eq:19.05-12:37} if $f\in\overline E\setminus E$, i.e. the absolutely continuous part of a limit measure. By the $L^1_{loc}$-equicontinuity of the $m$-bounds, we have that for each $j\in\N$, and $r,\ep>0$ there exists $\delta=\delta(r,\ep)>0$ such that
\begin{equation*}
\text{for all } t,s\in[-r,r]\, :\quad 0<t-s<\delta\quad\Rightarrow\quad \sup_{g\in E}\int_s^tm^j_g(u)\,du<\ep.
\end{equation*}
Thus, considered $t,s\in[-r,r]$ with $s<t$ and $t-s<\delta$ and reasoning as for \eqref{eq:boundedMbounds} and \eqref {eq:boundedMbounds-sameCONSTANT} one obtains the aimed inequality which implies that  $\overline E$ admits $L^1_{loc}$-equicontinuous $m$-bounds. \par\smallskip
Let us now treat the case $p>1$.  By \eqref{eq:DelayTopologiesChain}, if we prove the result for the topology $\T_D$, where $D$ is any countable dense subset of $\R^M$, we have it for all the other strong topologies. Now, for any sequence $(f_n)_\nin$ in $E$ converging to $f$ with respect to $\T_D$ and using the previously set notation, if $\{ m^j_n(\cdot)\mid\nin\}$ is $L^p_{loc}$-bounded, since by Alaoglu-Bourbaki theorem, for every $r>0$ the closed balls of $L^p([-r,r])$ are relatively compact in the weak topology $\sigma\big(L^p([-r,r]),L^q([-r,r])\big)$, then there exists a weakly convergent subsequence of $\big( m^j_{n}(\cdot)\big)_\nin$, that we keep denoting with the same indexes, with limit $m^*(\cdot)\in L^p([-r,r])$. As a consequence, the sequence of induced measures $(\mu^j_{n})_\nin$ vaguely converges to the absolutely continuous measure whose density is $m^*(\cdot)$ in $[-r,r]$. Now, recalling that  $L^p_{loc}\subset L^1_{loc}$ and using the same reasoning as before, we can obtain again $m^j(\cdot)\in L^1_{loc}$, defined as in \eqref{eq:19.05-12:37}, which satisfies \eqref{eq:21.05-12:41}, i.e. $m^j(\cdot)$ is an $m$-bound for the limit function $f$. By the uniqueness of the limit, $m^*(\cdot) $ has to coincide with $m^j(\cdot)$ in $[-r,r]$, which proves that $m^j(\cdot)\in L^p _{loc}$ and it allows to preserve the property of $L^p_{loc}$-boundedness of the $m$-bounds for $\overline E$. For the case of $L^1_{loc}$-equicontinuous $m$-bounds, the same argument used for $p=1$ applies.\par\smallskip
The result for the $l$-bounds, the $l_1$-bounds  and the $l_2$-bounds can be obtained by analogous reasoning.
\end{proof}
As we have noticed before, all the introduced topologies can be induced on $\LC(\R^{M},\R^N)$. As follows, we provide sufficient conditions under which some of those topologies coincide on suitable subsets of $\LC(\R^{M},\R^N)$. A direct consequence is that, on such sets, one can switch to the simplest and most maneuverable topologies which involve some sort of point-wise convergence. Notice also that in many applications such assumptions are trivially satisfied as, for example, when the $l$-bounds of a set $E$ are taken constant and bounded.
\par\smallskip
The following result contains Theorem 4.12 in \cite{paper:LNO1}, as a particular case.  Furthermore, under the same assumptions but using new arguments, we also extend the result of equivalence of the topologies to the weak topologies in~\eqref{eq:DelayTopologiesChain}.
\begin{thm}\label{thm:equivalence_topologies_nonhybrid}
Let $E$ be a set in $\LC_p\big(\R^{M},\R^N\big)$, $\Theta$ be any pair of suitable sets of moduli of continuity and $D$ any dense and countable subset of $\R^M$. The following statements are true.
\begin{itemize}[leftmargin=20pt]
\item[{\rm (i)}] If $E$ has $L^p_{loc}$-bounded $l$-bounds and $\T_1,\T_2$ is any pair of strong topologies in~\eqref{eq:DelayTopologiesChain}, then one has
\begin{equation*}
\!(E,\T_1)=(E,\T_2)\ \ \text{and}\ \ \mathrm{cls}_{(\SC_p(\R^{M},\R^N),\T_1)}(E)=\mathrm{cls}_{(\SC_p(\R^{M},\R^N),\T_2)}(E)\,.
\end{equation*}
\item[{\rm (ii)}] If $E$ has $L^1_{loc}$-bounded $l$-bounds and $\T_1,\T_2$ is any pair of weak topologies in~\eqref{eq:DelayTopologiesChain}, then one has
\begin{equation*}
\!(E,\T_1)=(E,\T_2)\ \ \text{and}\ \ \mathrm{cls}_{(\SC(\R^{M},\R^N),\T_1)}(E)=\mathrm{cls}_{(\SC(\R^{M},\R^N),\T_2)}(E)\,.
\end{equation*}
\end{itemize}
\end{thm}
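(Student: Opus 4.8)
The plan is to exploit that all topologies occurring in a single chain of \eqref{eq:DelayTopologiesChain} are totally ordered, so that to prove $(E,\T_1)=(E,\T_2)$ for \emph{every} admissible pair it suffices to show that the finest topology of the chain is already controlled by the coarsest one, once restricted to $E$. Since every space in sight is metrizable, I may argue with sequences. Concretely, for the strong case (i) I would show that a sequence $(f_n)_\nin$ in $E$ converging in $\T_D$ converges to the same limit in $\T_B$; for the weak case (ii), that $\sigma_D$-convergence forces $\sigma_{\Theta\smash{\widehat\Theta}}$-convergence. As every topology of the corresponding chain is squeezed between these two extremes, they all induce the same topology on $E$. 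Throughout, both $f_n$ and the limit $f$ must carry an $l$-bound on each $B_j$ with a \emph{common} integrability constant on each rational interval: for $f_n\in E$ this is the hypothesis, while for the limit it is supplied by Proposition~\ref{prop:07.07-19:44p=1}, which even preserves the constant, cf. \eqref{eq:boundedMbounds-sameCONSTANT}.

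\textbf{The strong case.} Fix $I=[q_1,q_2]$ with $q_1,q_2\in\Q$ and $j\in\N$, and let $C_{I,j}$ be the common $L^p(I)$-bound of the optimal $l$-bounds on $B_j$ of the functions involved. Given $\rho>0$, I would cover $B_j$ by finitely many balls of radius $\rho$ centred at points $z_1,\dots,z_K\in D\cap B_j$. For an arbitrary $x(\cdot)\in C(I,B_j)$, define the measurable selection $y(t)=z_{k(t)}$ with $|x(t)-z_{k(t)}|\le\rho$. Splitting $f_n(t,x(t))-f(t,x(t))$ into $[f_n-f](t,y(t))$ and the two Lipschitz commutators, the $L^p(I)$ triangle inequality gives
\begin{equation*}
\Big[\int_I|[f_n-f](t,x(t))|^p\,dt\Big]^{1/p}\le \sum_{k=1}^{K} p_{I,z_k}(f_n-f)+\rho\Big[\int_I(l^j_{f_n})^p\Big]^{1/p}+\rho\Big[\int_I(l^j_{f})^p\Big]^{1/p},
\end{equation*}
where $p_{I,z_k}$ are the $\T_D$-seminorms and the last two terms are each $\le\rho\,C_{I,j}$. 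Since the right-hand side is independent of $x(\cdot)$, it also bounds the $\T_B$-seminorm $p_{I,j}(f_n-f)$. Given $\ep>0$ I would first choose $\rho$ with $2\rho\,C_{I,j}<\ep/2$ (fixing the finite set $z_1,\dots,z_K$), and then $n$ so large that $\sum_k p_{I,z_k}(f_n-f)<\ep/2$. No modulus of continuity is needed here, precisely because $y(\cdot)$ need only be measurable.

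\textbf{The weak case (the main obstacle).} Here the absolute value sits \emph{outside} the integral, so the pointwise trick fails and one cannot reach $\T_B$; this is why the target must be $\sigma_{\Theta\smash{\widehat\Theta}}$ and why the moduli become indispensable. Using Lemma~\ref{lem:alternative-seminorms} I would replace the defining seminorm by $\sup_{x\in\K_j^I,\,u\in\widehat\K_j^I}|\int_I[f_n-f](t,x(t),u(t))\,dt|$, so the argument $(x(\cdot),u(\cdot))$ ranges over an equicontinuous family whose oscillation is controlled by $\theta^I_j$ and $\hat\theta^I_j$. I would then partition $I$ into rational subintervals $J_1,\dots,J_m$ of length $<\delta$ and, after covering $B_j\subset\R^{2N}$ by $\rho$-balls centred at points $z_1,\dots,z_K\in D$, freeze on each $J_i$ the continuous argument at a $z_k$ within $\rho$ of its value at the left endpoint. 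Equicontinuity bounds the displacement on $J_i$ by some $\omega(\delta)+\rho$ with $\omega(\delta)\to0$ as $\delta\to0$, whence, by the $L^1_{loc}$-boundedness of the $l$-bounds,
\begin{equation*}
\Big|\int_I[f_n-f]\big(t,x(t),u(t)\big)\,dt\Big|\le \sum_{i=1}^{m}\sum_{k=1}^{K}\Big|\int_{J_i}[f_n-f](t,z_k)\,dt\Big|+\big(\omega(\delta)+\rho\big)\,2\,C_{I,j}.
\end{equation*}
Each term of the double sum is a $\sigma_D$-seminorm over a rational subinterval, and there are only finitely many of them, so the bound is uniform in $(x,u)$. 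Choosing $\delta,\rho$ small (using $\theta^I_j(0)=\hat\theta^I_j(0)=0$) to make the last summand $<\ep/2$ and then $n$ large settles the implication $\sigma_D\Rightarrow\sigma_{\Theta\smash{\widehat\Theta}}$. This localization-plus-equicontinuity device is the genuinely new ingredient relative to the strong case, and I expect it to be the only delicate point.

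\textbf{Closures.} Finally, for the equality of closures, if $\T_1\le\T_2$ in a chain then $\mathrm{cls}_{\T_2}(E)\subseteq\mathrm{cls}_{\T_1}(E)$ trivially; conversely, any $f\in\mathrm{cls}_{\T_1}(E)$ is the $\T_1$-limit of a sequence in $E$, and since Proposition~\ref{prop:07.07-19:44p=1} guarantees that such $f$ carries an $l$-bound with the same constant, the estimates above apply verbatim to this sequence and yield $\T_2$-convergence, so $f\in\mathrm{cls}_{\T_2}(E)$. Hence the two closures coincide, and the remainder of the argument is an organized bookkeeping of the chain \eqref{eq:DelayTopologiesChain} together with Proposition~\ref{prop:07.07-19:44p=1}.
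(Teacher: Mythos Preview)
Your argument is correct. For part~(i) the paper simply invokes Theorem~4.12 of \cite{paper:LNO1}, so your direct measurable-selection estimate is an equivalent self-contained substitute. For part~(ii), however, your route genuinely differs from the paper's. The paper localizes in the \emph{space} variable via a continuous partition of unity $\{\phi_i\}$ subordinate to a $\delta$-cover of $B_j$ by balls centred at points of $D$, and only afterwards uses the common modulus of continuity to replace $\phi_i(x(\cdot))$ by step functions in $t$; the price is an error term $\int_I|h_n(t,x_i)|\,|\phi_i(x(t))-\bar\phi_i(x)(t)|\,dt$ that requires $L^1_{loc}$-bounded $m$-bounds, forcing a two-step scheme (Step~1 under the extra $m$-bound hypothesis, Step~2 a reduction via $h(t,x)=f(t,x)-f(t,x_0)$). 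Your approach instead localizes directly in the \emph{time} variable: partition $I$ into short rational subintervals, then freeze the equicontinuous argument at a nearby point of $D$ on each piece; summing the resulting $\sigma_D$-seminorms over \emph{all} $(i,k)$ makes the bound uniform in $(x,u)$ at once. This avoids both the partition of unity and the auxiliary $m$-bound hypothesis, and hence the two-step reduction, at no loss of generality. What the paper's route buys is a slightly more modular structure (the partition-of-unity device is reused elsewhere in \cite{paper:LNO1,paper:LNO2}); what yours buys is a shorter, more elementary proof that uses only the $l$-bound hypothesis actually stated.
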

\begin{proof}
The statement (i) is a consequence of Theorem 4.12 of~\cite{paper:LNO1} and the order relation of the strong topologies in~\eqref{eq:DelayTopologiesChain}. \par\smallskip
As regards (ii), fix $D$, dense and countable subset of $\R^M$, and  $\Theta$, suitable set of moduli of continuity as in Definition \ref{def:ssmc}. Thanks to Proposition~\ref{prop:07.07-19:44p=1}, we know that $\mathrm{cls}_{(\SC(\R^M,\R^N),\sigma_D)}(E)\subset\LC(\R^M)$ and it has  $L^1_{loc}$-bounded $l$-bounds. Assume that $(f_n)_\nin$ is a sequence of elements in $ E$ converging to some $f$ in $\big(\SC(\R^{M},\R^N),\sigma_{D}\big)$. We prove that $(f_n)_\nin$ converges to $f$ in $\big(\SC(\R^{M},\R^N),\sigma_{\Theta}\big)$. We proceed by dividing the proof in two steps.\par\smallskip
\emph{Step 1.} Consider a set $E_1$ with $L^1_{loc}$-bounded $m$-bounds and $L^1_{loc}$-bounded $l$-bounds. Let $(h_n)_\nin$ be a sequence of elements of $E_1$ converging to some $h$ in $\big(\LC(\R^{M},\R^N),\sigma_{ D}\big)$. We shall prove the convergence  in $\big(\LC(\R^{M},\R^N),\sigma_{\Theta}\big)$. Fix a compact interval  $I=[q_1,q_2]$, with $q_1,q_2\in\Q$, $j\in\N$ and, for any $\nin$,  let $m_n^j(\cdot),l_n^j(\cdot)\in L^1_{loc}$ be respectively the optimal $m$-bound and the optimal $l$-bound of $h_n$ on $B_j$. By the $L^1_{loc}$-boundedness of the $l$-bounds, there is a $\rho>0$ such that
\begin{equation*}
\sup_{\nin}\int_Il^j_n(s)\, ds<\rho<\infty\, .
\end{equation*}
Fix $\ep>0$ and consider $\delta=\ep/3\rho$. Since $ B_j \subset \R^{M}$ is compact, and $D$ is dense in $\R^{M}$, there exist $x_1,\dots x_{\nu}\in D$ such that $B_j\subset \bigcup_{i=1}^{\nu} \overcircle{B_\delta}(x_i)$, where $\overcircle{B_\delta}(x)$ 
denotes the open ball of $\R^{M}$ of radius $\delta$ centered at $x\in\R^{M}$. For $i=1,\dots,\nu$, let us consider the continuous functions $\phi_i:\R^{M}\to[0,1]$, so that
\begin{equation*}
\supp(\phi_i)\subset \overcircle{B_{\delta}}(x_i)\, \qquad \mathrm{and}\qquad \sum_{i=1}^\nu \phi_i(x)=1 \quad \forall\, x\in B_{j} \, ,
\end{equation*}
and define the functions
\begin{equation*}
h^*_n(t,x)=\sum_{i=1}^\nu\phi_i(x)\, h_n(t,x_i)\qquad \mathrm{and}\qquad
h^*(t,x)=\sum_{i=1}^\nu \phi_i(x)\,  h(t,x_i)\, .
\end{equation*}
Denoted by $\K^I_j$ the compact subset of  $C(I,B_j)$ admitting $\theta^
I_j\in\Theta$ as a modulus of continuity,  one has that for any $x(\cdot)\in \K^I_j$
\begin{equation}\label{eq:25.05-16:26}
\begin{split}
\!\!\!\bigg|\int_I&\big[h_{n}\big(t,x(t)\big)-h\big(t,x(t)\big)\big]dt\bigg|\le \bigg|\int_I\!\big[h_{n}\big(t,x(t)\big)- h^*_{n}\big(t,x(t)\big)\big]dt\bigg| \\
&\ +\bigg|\int_I\!\big[h^*_{n}\big(t,x(t)\big)-h^*\big(t,x(t)\big)\big]dt\bigg|+\bigg|\int_I\!\big[h^*\big(t,x(t)\big)-h\big(t,x(t)\big)\big]dt\bigg| \, .
\end{split}
\end{equation}
Let us separately analyze each element in the sum on the right-hand side of equation~\eqref{eq:25.05-16:26}. As regards the first one, we have that
\begin{equation}\label{eq:03.05-17:38}
\begin{split}
\bigg|\int_I\big[h_{n}\big(t,&\,x(t)\big)- h^*_{n}\big(t,x(t)\big)\big]dt\bigg|\\
&=\bigg|\int_I\sum_{i=1}^\nu \phi_i\big(x(t)\big)\left[ h_{n}\big(t,x(t)\big)- h_{n}(t,x_i)\right]dt\bigg|\\
&\le \int_I \sum_{i=1}^\nu \phi_i\big(x(t)\big)\, l^j_{n}(t)\ \delta\, dt=\frac{\ep}{3\rho}\int_Il^j_{n}(t)\, dt\le \frac{\ep}{3}\, .
\end{split}
\end{equation}
Similar reasonings apply to the third element of the sum in \eqref{eq:25.05-16:26}: in particular, recall that, reasoning as in~\eqref{eq:boundedMbounds} and \eqref{eq:boundedMbounds-sameCONSTANT}, the $l$-bound for $h$ on $B_j$, namely $\bar{l}^j(\cdot)\in L^1_{loc}$,  satisfies
\begin{equation*}
\int_I\bar l^j(s)\, ds<\rho<\infty.
\end{equation*}
Finally, let us deal with  the remaining integral in \eqref{eq:25.05-16:26}.  By the uniform continuity of the functions $\phi_i(\cdot)$ on $B_j$, and recalling that all $x(\cdot)\in\K^I_j$ share the same modulus of continuity, we have that for the given $\ep>0$ there exists $\delta>0$ such that for all $i\in\{1,\dots,\nu\}$ one has
\begin{equation*}
\forall \,s,t\in I,\, \forall\, x(\cdot)\in\K^I_j:\quad |s-t|<\delta\quad \Rightarrow\quad  \big|\phi_i\big(x(s)\big)-\phi_i\big(x(t)\big)\big|<\frac{\ep}{9\, \nu\rho_m}\,,
\end{equation*}
where
\begin{equation*}
\rho_m:=\max\left\{\int_Im^j(t)\,dt,\,\sup_\nin\int_Im^j_{n}(t)\,dt\right\}<\infty\,,
\end{equation*}
and $m_j(\cdot)\in L^1_{loc}$  denotes the optimal $m$-bound for $h$ on $B^j$ whose existence is guaranteed by Proposition~\ref{prop:07.07-19:44p=1}. In particular, the constant $\rho_m$ is well defined thanks to the $L^1_{loc}$-boundedness of  the $m$-bounds of the functions in $E_1$.
Thus, let us consider a $\delta$-partition of $I$, i.e. $\tau_1,\dots,\tau_{\eta}\in I\cap\Q$ such that $I=[\tau_1,\tau_\eta]$ and $0<\tau_{k+1}-\tau_k<\delta$, for any $k=1,\dots,\eta-1$, and a function
\begin{equation*}
\bar \phi_i: \K^I_j\to L^\infty(I,\R)\quad\text{ defined by }\quad
\bar \phi_i(x)(t)= \sum_{k=1}^{\eta} \phi_i\big(x(\tau_k)\big)\chi_{(\tau_k,\tau_{k+1}]}(t)
\end{equation*}
Notice that, for any $x(\cdot)\in\K^I_j$ and any $i=1,\dots,\nu$ one has
\begin{equation*}
\|\phi_i\big(x(\cdot)\big)-\bar \phi_i(x)(\cdot)\|_{L^\infty(I)}<\frac{\ep}{9\,\nu\rho_m}\,,
\end{equation*}
Now, we can write
\begin{equation}\label{eq:02.05-16:41}
\begin{split}
\bigg|\int_I\big[h^*_{n}\big(t,&\,x(t)\big)-h^*\big(t,x(t)\big)\big]dt\bigg|\\
&\le\sum_{i=1}^{\nu}\bigg|\int_I \phi_i\big(x(t)\big)\left[ h_{n}\big(t,x_i\big)- h(t,x_i)\right]dt\bigg|\\
&\le \sum_{i=1}^{\nu}\bigg|\int_I \bar \phi_i(x)(t)\left[ h_{n}\big(t,x_i\big)- h(t,x_i)\right]dt\bigg|\\
&\qquad+\sum_{i=1}^{\nu}\int_I \big| h_{n}\big(t,x_i\big)\big|\ \big|\phi_i\big(x(t)\big)- \bar \phi_i(x)(t)\big|\,dt\\
&\qquad+\sum_{i=1}^{\nu}\int_I \big| h\big(t,x_i\big)\big|\ \big|\bar \phi_i(x)(t)- \phi_i\big(x(t)\big)\big|\,dt\\
&\le \sum_{i=1}^{\nu}\bigg[\,\sum_{k=1}^{\eta}\phi_i\big(x(\tau_k)\big)\left|\int_{\tau_k}^{\tau_{k+1}}\left[ h_{n}\big(t,x_i\big)- h(t,x_i)\right]dt\right|\\
&\qquad+2\,\rho_m\|\phi_i\big(x(\cdot)\big)-\bar \phi_i(x)(\cdot)\|_{L^\infty(I)}\bigg].
\end{split}
\end{equation}
By the convergence  of $(h_n)_\nin$ to $h$ in $\big(\LC(\R^{M},\R^N),\sigma_{D}\big)$ and considering that we are only using a finite number of points $x_i$, with $i=1,\dots,\nu$, there exists $n_0\in\N$ such that, if $n>n_0$, then for all $i=1,\dots,\nu$ and for all $k=1,\dots,\eta$ one has
\begin{equation*}
\left|\int_{\tau_k}^{\tau_{k+1}}\left[ h_{n}\big(t,x_i\big)-h(t,x_i)\right]dt\right|<\frac{\ep}{9\,\nu\,\eta}\,.
\end{equation*}
Additionally, notice that due to the convergence of $(h_n)_\nin$ to $h$ with respect to $\sigma_{D}$, inequality~\eqref{eq:02.05-16:41} is uniform in $x(\cdot)\in\K^I_j$.
Thus, for $n>n_0$, \eqref{eq:02.05-16:41} becomes
\begin{equation}\label{eq:02.05-16:40}
\bigg|\int_I\big[h^*_{n}\big(t,x(t)\big)-h^*\big(t,x(t)\big)\big]dt\bigg| <\frac{\ep}{9} +\frac{2\,\nu\rho_m\ep}{9\,\nu\rho_m}=\frac{\ep}{3}.
\end{equation}
From \eqref{eq:25.05-16:26}, \eqref{eq:03.05-17:38} and \eqref{eq:02.05-16:40}, and the uniformity in $x(\cdot)\in\K^I_j$, we obtain that  the sequence $(h_n)_\nin$ converges to $h$ in $\big(\LC(\R^{M},\R^N),\sigma_{\Theta}\big)$. Consequently, the topologies of type $\sigma_D$ and $\sigma_\Theta$ are equivalent on $E_1$.\par\smallskip
\emph{Step 2.} Consider $x_0\in B_1\cap D$ and define the functions
\begin{equation*}
h(t,x)= f(t,x)-f(t,x_0)\quad\text{and}\quad h_n(t,x)= f_n(t,x)-f_n(t,x_0),\ \forall\,\nin.
\end{equation*}
Using the $L^1_{loc}$-boundedness of the $l$-bounds of $E$, it is easy to show that the set $\{h_n\mid\nin\}\cup\{h\}$ has $L^1_{loc}$-bounded $m$-bounds and $L^1_{loc}$-bounded $l$-bounds. Furthermore, from the convergence of $(f_n)_\nin$ to $f$ in $\big(\SC(\R^{M},\R^N),\sigma_{D}\big)$, one easily deduces that also $(h_n)_\nin$ converges to $h$ in $\big(\SC(\R^{M},\R^N),\sigma_{ D}\big)$. Therefore, the assumptions of step 1 apply to the set $E_1= \{h_n\mid\nin\}\cup\{h\}$ and thus one has that $(h_n)_\nin$ converges to $h$ in $\big(\LC(\R^{M}),\sigma_{ \Theta}\big)$. Hence, for each interval $I=[q_1,q_2]$, with $q_1,q_2\in\Q$ and for each $j\in\N$ one has
\begin{equation*}
\begin{split}
\sup_{x(\cdot)\in\K^I_j}\bigg|&\int_I[f_n\big(t,x(t)\big)-f\big(t,x(t)\big)]\,dt\bigg|\\
&\le \sup_{x(\cdot)\in\K^I_j}\bigg|\int_I[h_n\big(t,x(t)\big)-h\big(t,x(t)\big)]\,dt\bigg|+\bigg|\int_I[f_n\big(t,x_0\big)-f\big(t,x_0\big)]\,dt\bigg|,
\end{split}
\end{equation*}
and the right-hand side goes to zero as $\nti$ because $(h_n)_\nin$ converges to $h$ in $\big(\LC(\R^{M}),\sigma_{ \Theta}\big)$ and $(f_n)_\nin$ converges to $f$ in $\big(\LC(\R^{M}),\sigma_{ D}\big)$, which implies that $(f_n)_\nin$ converges to $f$ in $\big(\LC(\R^{M}),\sigma_{ \Theta }\big)$ and, as a consequence, all the topologies of type $\sigma_D$ and $\sigma_\Theta$ coincide on $E$.
\end{proof}
On the other hand, by weakening the assumptions of Theorem \ref{thm:equivalence_topologies_nonhybrid} to the sole $l_2$-bounds, we show how it is still possible to obtain equivalence of the hybrid topologies (either weak or strong).
\begin{thm}\label{thm:equivalence_topologies}
Let $E$ be a set in $\SC_p\big(\R^{2N},\R^N\big)$, $\Theta$ and $\widehat\Theta$ be any pair of suitable sets of moduli of continuity and $D$ any dense and countable subset of $\R^N$. The following statements are true.
\begin{itemize}[leftmargin=20pt]
\item[{\rm (i)}]  If $E$ has $L^p_{loc}$-bounded $l_2$-bounds and $\T_1,\T_2\in\{ \T_{\Theta D},\T_{\Theta \smash{\widehat\Theta}},\T_{\Theta B}\}$, then one has
\begin{equation*}
\!(E,\T_1)=(E,\T_2)\ \ \text{and}\ \ \mathrm{cls}_{(\SC_p(\R^{2N},\R^N),\T_1)}(E)=\mathrm{cls}_{(\SC_p(\R^{2N},\R^N),\T_2)}(E).
\end{equation*}
\item[{\rm (ii)}]  If $E$ has $L^1_{loc}$-bounded $l_2$-bounds and $\T_1,\T_2\in\{ \sigma_{\Theta D},\sigma_{\Theta \smash{\widehat\Theta}}\}$, then one has
\begin{equation*}
\!(E,\T_1)=(E,\T_2)\ \ \text{and}\ \ \mathrm{cls}_{(\SC(\R^{2N},\R^N),\T_1)}(E)=\mathrm{cls}_{(\SC(\R^{2N},\R^N),\T_2)}(E).
\end{equation*}
\end{itemize}
\end{thm}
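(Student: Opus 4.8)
The plan is to mirror the structure of the proof of Theorem~\ref{thm:equivalence_topologies_nonhybrid}, but now the roles of the two $N$-blocks of spatial variables are asymmetric: we only have control on the $l_2$-bounds, i.e.\ Lipschitz continuity in the \emph{last} $N$ variables $u$. Because of the chain of order relations established in Remark~\ref{rmk:DelayTopologiesChain}, it suffices to prove the reverse inequalities, namely that convergence in the coarsest of the three topologies already forces convergence in the finest. Concretely, for part (ii) it is enough to show $\sigma_{\Theta D}\ge\sigma_{\Theta\smash{\widehat\Theta}}$ on $E$ (the opposite inequality being free), and for part (i) that $\T_{\Theta D}\ge\T_{\Theta B}$ on $E$, since $\T_{\Theta D}\le\T_{\Theta\smash{\widehat\Theta}}\le\T_{\Theta B}$ always holds. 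I will argue the weak case (ii) in detail, the strong case being analogous and, as in the previous theorem, simpler because integrals are controlled by integrals of absolute values.

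\textbf{The key reduction.} First I would invoke Lemma~\ref{lem:alternative-seminorms}, which lets me replace the awkward translated-history seminorms of $\sigma_{\Theta\smash{\widehat\Theta}}$ by the equivalent untranslated family $\sup_{x(\cdot)\in\K_j^I,\,u(\cdot)\in\widehat\K_j^I}|\int_I f(t,x(t),u(t))\,dt|$, where now $x$ and $u$ range over compact sets on the \emph{same} interval $I$. This aligns the two blocks and makes the problem formally resemble the non-hybrid one. Then, fixing $I=[q_1,q_2]$ with $q_1,q_2\in\Q$ and $j\in\N$, I would exploit the fact that the history-block functions $u(\cdot)\in\widehat\K_j^I$ all share the modulus of continuity $\hat\theta_j^I$. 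Using compactness of $B_j\subset\R^N$ and density of $D$, cover $B_j$ by finitely many $\delta$-balls centred at points $u_1,\dots,u_\nu\in D$, take a subordinate partition of unity $\{\phi_i\}$, and form the approximants
\begin{equation*}
f_n^*(t,x,u)=\sum_{i=1}^\nu\phi_i(u)\,f_n(t,x,u_i),\qquad f^*(t,x,u)=\sum_{i=1}^\nu\phi_i(u)\,f(t,x,u_i),
\end{equation*}
splitting $|\int_I[f_n-f]\,dt|$ into the three usual pieces $|\int_I(f_n-f_n^*)|$, $|\int_I(f_n^*-f^*)|$ and $|\int_I(f^*-f)|$. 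The crucial point is that freezing the $u$-variable at the centres $u_i$ reduces the middle term to finitely many seminorms $\sup_{x(\cdot)\in\K_j^I}|\int_I[f_n(t,x(t),u_i)-f(t,x(t),u_i)]\,dt|$, each of which is exactly a $\sigma_{\Theta D}$-seminorm (with the frozen value $u_i\in D$) and hence tends to $0$ by hypothesis, uniformly in $x(\cdot)\in\K_j^I$.

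\textbf{The main obstacle.} The delicate part is controlling the two outer terms $|\int_I(f_n-f_n^*)|$ and $|\int_I(f^*-f)|$, which measure the error of freezing $u$. Here the $L^1_{loc}$-bounded $l_2$-bounds enter decisively: since $|f_n(t,x,u)-f_n(t,x,u_i)|\le l_{2,n}^j(t)\,\delta$ whenever $u\in\overcircle{B_\delta}(u_i)$, and $\int_I l_{2,n}^j\le\rho$ uniformly in $n$ by the bounded-$l_2$-bounds assumption, the error is bounded by $\rho\delta$ and can be made $<\ep/3$ by choosing $\delta$ small. For the limit function $f$ I need that it too lies in $\SC_p$ with an $L^1_{loc}$-bounded $l_2$-bound satisfying the same integral bound $\rho$; this is supplied by Proposition~\ref{prop:07.07-19:44p=1}, applied to the $l_2$-bounds, exactly as the $m$- and $l$-bounds were propagated there. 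As in Step~2 of the previous proof, to guarantee the relevant $m$-bounds are finite (needed when estimating the partition-of-unity oscillation term $|\phi_i(u(s))-\phi_i(u(t))|$ against the shared modulus $\hat\theta_j^I$ of the $u(\cdot)$) I would pass to the difference $h_n(t,x,u)=f_n(t,x,u)-f_n(t,x,u_0)$ for a fixed $u_0\in B_1\cap D$, which has $L^1_{loc}$-bounded $m$-bounds inherited from the $l_2$-bounds, run Step~1 on $E_1=\{h_n\}\cup\{h\}$, and recombine. Once the three pieces are each $<\ep/3$ uniformly in $x(\cdot)\in\K_j^I$ and $u(\cdot)\in\widehat\K_j^I$, the alternative seminorm of $\sigma_{\Theta\smash{\widehat\Theta}}$ tends to $0$, giving the equivalence on $E$; the equality of closures then follows because Proposition~\ref{prop:07.07-19:44p=1} keeps the defining $l_2$-bound property inside the closure, so the argument applies verbatim there. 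The strong statement (i) is obtained by the same scheme with absolute values inside the integrals.
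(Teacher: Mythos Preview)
Your approach is correct but differs from the paper's. You rebuild the partition-of-unity machinery of Theorem~\ref{thm:equivalence_topologies_nonhybrid} from scratch, applying it only to the $u$-block while keeping the supremum over $x(\cdot)\in\K_j^I$ throughout; the uniformity in $x(\cdot)$ is then automatic because every estimate (the $l_2$-bound error $\rho\delta$, the step-function approximation of $\phi_i(u(\cdot))$, and the residual $\sigma_{\Theta D}$-seminorms on subintervals) is uniform from the outset. The paper instead treats Theorem~\ref{thm:equivalence_topologies_nonhybrid} as a black box: for each \emph{fixed} $x(\cdot)\in\K_j^I$ it defines $\widehat f_n(t,u)=f_n(t,x(t),u)$, observes that $\{\widehat f_n\}\cup\{\widehat f\}\subset\LC(\R^N,\R^N)$ has $L^1_{loc}$-bounded $l$-bounds (these are precisely the $l_2$-bounds of the $f_n$), and applies Theorem~\ref{thm:equivalence_topologies_nonhybrid}(ii) to upgrade $\sigma_D$-convergence to $\sigma_{\smash{\widehat\Theta}}$-convergence. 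The missing uniformity in $x(\cdot)$ is then recovered by contradiction and compactness of $\K_j^I$: if it failed along some $x_k(\cdot)\to x(\cdot)$, one sets $g_k(t,u)=f_{n_k}(t,x_k(t),u)$, checks $g_k\to g$ in $\sigma_D$, and re-applies Theorem~\ref{thm:equivalence_topologies_nonhybrid} to reach a contradiction. Your direct argument avoids this two-pass compactness step and makes the uniformity transparent, at the cost of repeating the Step~1/Step~2 scaffolding; the paper's argument is shorter and more modular but relies on the somewhat delicate contradiction device.
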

\begin{proof}
We will prove (ii), that is, the case of the weak hybrid topologies because (i) can be carried out with the same reasoning and simplifying some arguments. Consider $E\subset\SC\big(\R^{2N},\R^N\big)$ with $L^1_{loc}$-bounded $l_2$-bounds, and fixed any numerable set $D$, dense in $\R^{N}$, and any pair of suitable sets of moduli of continuity $\Theta$ and $\smash{\widehat\Theta}$ (see {\rm Definition~\ref{def:ssmc}}), assume that $(f_n)_\nin$ is a sequence of elements of $ E$ converging to some $f$ in $\big(\SC(\R^{2N},\R^N),\sigma_{\Theta D}\big)$. We shall prove that $(f_n)_\nin$ converges to $f$ in $\big(\SC(\R^{2N},\R^N),\sigma_{\Theta \smash{\widehat\Theta}}\big)$. In order to simplify the notation,
we use the following symbols:
\begin{equation*}
\overline E_{\Theta D}=\mathrm{cls}_{(\SC(\R^{2N},\,\R^N),\sigma_{\Theta D})}(E),\quad\text{and}\quad \overline E_{\Theta \smash{\widehat\Theta}}=\mathrm{cls}_{(\SC(\R^{2N},\,\R^N),\sigma_{\Theta \smash{\widehat\Theta}})}(E).
\end{equation*}
Notice that, thanks to Proposition~\ref{prop:07.07-19:44p=1}, both $\overline E_{\Theta D}$ and $ \overline E_{\Theta \smash{\widehat\Theta}}$ have $L^1_{loc}$-bounded $l_2$-bounds. Thus, fixed $x(\cdot)\in \K^I_j$, we can define the functions $\widehat f,\widehat f_n\in\LC(\R^{N},\R^N)$, with $\nin$, by
\begin{equation*}
\widehat f (t,u)=f\big(t,x(t ),u\big)\quad\text{and}\quad \widehat f_n (t,u)=f_n\big(t,x(t ),u\big)\,.
\end{equation*}
It is immediate to prove that  $(\widehat f_n)_\nin$ converges $\widehat f$ in $\big(\LC(\R^{N},\R^N),\sigma_{D}\big)$. In fact, since, by construction $\{\widehat f_n\mid\nin\}\cup\{\widehat f\}$ has $L^1_{loc}$-bounded $l$-bounds, then, due to Theorem \ref{thm:equivalence_topologies_nonhybrid}(ii), we have that $(\widehat f_n)_\nin$ converges to $\widehat f$ in $\big(\LC(\R^{N},\R^N),\sigma_{\smash{\widehat\Theta}}\big)$. Consequently, for any interval $I$ with rational extrema and $j\in\N$ one has
\begin{equation}\label{eq:09.11:19:12}
\sup_{u(\cdot)\in\widehat\K^I_j}\bigg|\int_I[f_n\big(t,x(t),u(t)\big)-f\big(t,x(t),u(t)\big)]\,dt\bigg|\xrightarrow{\nti} 0\,.
\end{equation}
As a matter of fact, \eqref{eq:09.11:19:12} is also uniform in $x(\cdot)\in \K^I_j$. Indeed, if this were not the case there would exist an $\ep>0$, a subsequence $(n_k)_{k\in\N}$, and a sequence $\big(x_k(\cdot)\big)_{k\in\N}$ in $\K_j^I$ such that
\begin{equation}\label{eq:09.11-19:10}
\ep<\sup_{u(\cdot)\in\widehat\K^I_j}\bigg|\int_I[f_{n_k}\big(t,x_k(t),u(t)\big) -f\big(t,x_k(t),u(t)\big)]\,dt\bigg|\,.
\end{equation}
However, if we consider the functions $g,g_k\in\LC(\R^{N},\R^N)$, with $k\in\N$, defined by
\begin{equation*}
g(t,u)=f\big(t,x(t ),u\big)\quad\text{and}\quad g_k (t,u)=f_{n_k}\big(t,x_k(t ),u\big)\,,
\end{equation*}
where $x(\cdot)$ is the limit, up to a subsequence, of $\big(x_{n_k}(\cdot)\big)_{k\in\N}$ (such limit exists because $\K_j^I$ is compact), we have that $(g_{k})_{k\in\N}$ converges to $g$ in $\big(\LC(\R^{N},\R^N),\sigma_{D}\big)$. Indeed, for any interval $I$ with rational extrema and any $u\in D$ one has
\begin{align*}
\bigg|\int_I[g_k(t,u)-g(t,u)]\,dt\bigg|\le & \bigg|\int_I[f_{n_k}\big(t,x_k(t),u\big)-f\big(t,x_k(t),u\big)]\,dt\bigg|\\
& \quad + \bigg|\int_I[f\big(t,x_k(t),u\big)-f\big(t,x(t),u\big)]\,dt\bigg|\,,
\end{align*}
and the right-hand side of the previous inequality goes to zero as $k\to\infty$ because by assumption $(f_n)_\nin$ converges to $f$ in $\big(\SC(\R^{2N},\R^N),\sigma_{\Theta D}\big)$ and due to the fact that $f\in\SC(\R^{2N},\R^N)$ and $\big(x_k(\cdot)\big)_{k\in\N}$ converges to $x(\cdot)$. Hence, noticing that $\{g_k\mid k\in\N\}\cup\{g\}$ has $L^1_{loc}$-bounded $l$-bound,  from Theorem~\ref{thm:equivalence_topologies_nonhybrid}(ii) one has that $(g_{k})_{k\in\N}$ converges to $g$ in $\big(\LC(\R^{N},\R^N),\sigma_{\smash{\widehat\Theta}}\big)$ which in turn contradicts \eqref{eq:09.11-19:10}. Therefore, we have that \eqref{eq:09.11:19:12} is uniform in $x(\cdot)\in \K^I_j$, which, thanks to Lemma~\ref{lem:alternative-seminorms}, shows that $(f_n)_\nin$ converges to $f$ in $\big(\SC(\R^{2N},\R^N),\sigma_{\Theta \smash{\widehat\Theta}}\big)$ and ends the proof.
\end{proof}
\section{Continuous dependence of the solutions in $\mathcal{C}$}\label{sec:Continuity-classic-topologies}
This section contains the results of continuous variation on initial data for the topologies treated in Section~\ref{sectopo} and the respective results of continuity of the skew-product semiflows on $E\times C([-1,0],\R^N)$ generated by the delay differential problem of the type \eqref{eq:16.07-21:13}, where $E\subset\LC(\R^{2N},\R^N)$ satisfies specific assumptions on the $m$-bounds and/or $l$-bounds and $ C([-1,0],\R^N)$ denotes the set of continuous functions mapping $[-1,0]$ onto $\R^N$. \par\smallskip
As a general rule, when a result of equivalence of the topologies (either Theorem~\ref{thm:equivalence_topologies_nonhybrid} or Theorem~\ref{thm:equivalence_topologies}) holds on $E$, we will write the continuity of the skew-product semiflow using the most practical available topology, i.e. the one that involves some kind of point-wise convergence. It is implicit that the same result holds with respect to any of the other equivalent or stronger topologies of type~\eqref{eq:DelayTopologiesChain}.\par\smallskip
Throughout the whole section (and the rest of the work), an important role will be played by specific suitable sets of moduli of continuity, that is the ones provided by the $m$-bounds of a Carath\'eodory function or of a set of Carath\'eodory functions.
\begin{defn}\label{def:THETAjseq}
Let $E\subset\LC_p(\R^{2N},\R^N)$ admit $L^1_{loc}$-equicontinuous $m$-bounds. For any $j\in\N$ and for any interval $I=[q_1,q_2]$, $q_1,q_2\in\Q$, define
\begin{equation*}
\theta^I_j(s):=
 \sup_{t\in I,f\in E}\int_t^{t+s}m_f^j(u)\, du\, ,
\end{equation*}
where, for any $f\in E$, the function $m_f^j(\cdot)\in L^1_{loc}$ denotes the optimal $m$-bounds of $f$ on $B_j$. Notice that, since $E$ admits $L^1_{loc}$-equicontinuous $m$-bounds, then  $\Theta=\{\theta^I_j(\cdot)\mid I=[q_1,q_2],\, q_1,q_2\in\Q,\, j\in\N\}$ defines a suitable set of moduli of continuity.
\end{defn}
\begin{rmk}\label{rmk:THETAjfunc}
If $f\in\LC_p(\R^{2N},\R^N)$ has $L^1_{loc}$-equicontinuous $m$-bounds we similarly define for any $B_j\subset\R^N$,
\begin{equation}\label{eq:moduli-m-bounds-f}
\theta_j(s):= \sup_{t\in\R}\int_t^{t+s}m^j(u)\, du\, ,
\end{equation}
where $m^j(\cdot)$ is the optimal $m$-bound for $f$ on $B_j$. Here again, notice that $\Theta=\{\theta^I_j(\cdot)\mid I=[q_1,q_2],\, q_1,q_2\in\Q,\, j\in\N\}$  defines a suitable set of moduli of continuity thanks to the $L^1_{loc}$-equicontinuity. As a matter of fact, the so-defined suitable set of moduli of continuity $\Theta$, is the one defined in Definition~\ref{def:THETAjseq} when $E=\{f_t\mid t\in\R\}$ that is, $E$ is the set of the time translations of $f$. For this reason, the elements of $\Theta$ are independent of the interval $I$. In other words, in this case, for any $j\in\N$ one has that any $\theta^I_j(\cdot)$ of Definition~\ref{def:THETAjseq}, where $I=[q_1,q_2]$, $q_1,q_2\in\Q$,  is in fact the  $\theta_j(\cdot)$ of \eqref{eq:moduli-m-bounds-f}.
\end{rmk}
For the sake of completeness and to set some notation, we also state a theorem of existence and uniqueness of the solution for a Cauchy Problem of Carath\'eodory type with constant delay.  A proof can be derived by the one given for Carath\'eodory ordinary differential equations in Coddington and Levinson~\cite[Theorem 1.1, p.43, Theorem 1.2, p.45 and Theorem 2.2, p.49]{book:CL} (see Hale and Cruz~\cite{paper:CH} and Hale and Verduyn Lunel~\cite{book:HAL} for the proofs of existence, uniqueness and continuous dependence for a more general class of delay differential equations). Notice also that, in order to simplify the notation, from now on we will denote by $\mathcal{C}$ the set
\[
\mathcal{C}:= C([-1,0],\R^N).
\]
\begin{thm}\label{thm:13.04-16:49}
For any $f\in\LC_p(\R^{2N},\R^N)$ and any $\phi\in \mathcal{C}$ there exists a maximal interval $I_{f,\phi}=[-1,b_{f,\phi})$ and a unique continuous function $x(\cdot,f,\phi)$ defined on $I_{f,\phi}$ which is the solution of the delay differential problem
\begin{equation}\label{eq:solLCEDDE}
\begin{cases}
\dot x=f\big(t,x(t),x(t-1)\big)&\text{for }t>0\,,\\
x(t)=\phi(t) &\text{for }t\in[-1,0].
\end{cases}
\end{equation}
In particular, if $b_{f,\phi}<\infty$, then $|x(t,f,\phi)|\to\infty$ as $t\to b_{f,\phi}$.
\end{thm}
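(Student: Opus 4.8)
The plan is to reduce the delay differential problem \eqref{eq:solLCEDDE} to a sequence of ordinary Carath\'eodory Cauchy problems solved step by step on successive intervals of length $1$, exploiting the fact that the delay is constant and equal to $1$. On the first interval $[0,1]$, the term $x(t-1)$ is simply $\phi(t-1)$, which is a known continuous function since $\phi\in\mathcal{C}$; hence the problem decouples into the ordinary Carath\'eodory equation $\dot x = g(t,x)$ with $g(t,x):=f\big(t,x,\phi(t-1)\big)$ and initial condition $x(0)=\phi(0)$. I would first verify that $g\in\LC_p(\R^N,\R^N)$: its $m$-bound and $l$-bound on any compact set follow from those of $f$ restricted to the compact tube $\{(x,\phi(t-1)):t\in[0,1]\}$, using (C2) and (L) together with the continuity (hence boundedness) of $\phi$. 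The classical existence-uniqueness-continuation theory for Carath\'eodory ODEs (Coddington--Levinson, Theorems~1.1, 1.2, 2.2) then yields a unique absolutely continuous solution on a maximal subinterval $[0,b_1)$ with $b_1\le 1$, and blow-up of $|x|$ if $b_1<1$.

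Next I would iterate. Suppose the solution has been constructed on $[-1,k]$ for some integer $k\ge 0$ (with the full interval reached, i.e. no blow-up before $k$). On $[k,k+1]$ the delayed argument $x(t-1)$ ranges over $[k-1,k]$, where the solution is already known and continuous; so again we face an ordinary Carath\'eodory problem $\dot x = f\big(t,x(t),\psi(t)\big)$ with $\psi(t):=x(t-1,f,\phi)$ a fixed continuous function and initial datum $x(k)$ matching the previously built solution. The same ODE theory produces a unique maximal continuation, and the solutions glue continuously at the integer nodes because the initial value at $t=k$ is inherited from the prior step. Concatenating, I obtain a unique continuous solution on a maximal interval $I_{f,\phi}=[-1,b_{f,\phi})$. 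Uniqueness propagates node by node: on each $[k,k+1]$ the right-hand side is Lipschitz in the state variable with locally integrable constant (from the $l$-bound of $f$), so the ODE solution is unique, and by induction the whole solution is unique.

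For the blow-up alternative, I would argue that if $b_{f,\phi}<\infty$ then the solution must escape every compact set as $t\to b_{f,\phi}$. The only subtlety is that the delayed term $x(t-1)$ involves values of the solution on $[b_{f,\phi}-1,b_{f,\phi})$, which are themselves part of the solution being controlled; but on the final step the relevant history lies on an interval where the solution is already bounded (being one unit earlier), so the effective ODE on the last subinterval has the standard blow-up behavior, giving $|x(t,f,\phi)|\to\infty$ as $t\to b_{f,\phi}$. The main obstacle, modest as it is, lies in the bookkeeping that shows the step-method right-hand sides genuinely satisfy the Carath\'eodory and Lipschitz hypotheses uniformly enough to invoke the ODE theorems, and in verifying that the blow-up at $b_{f,\phi}$ is not artificially caused by the history term but reflects a genuine escape of the current state; both are handled by noting that on each unit interval the history is a fixed, bounded continuous function, so all estimates reduce cleanly to the ordinary-differential-equation case already available in the literature.
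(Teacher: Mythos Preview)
Your proposal is correct and matches the paper's approach: the paper does not give a self-contained proof but states that one ``can be derived by the one given for Carath\'eodory ordinary differential equations in Coddington and Levinson [Theorem~1.1, Theorem~1.2, Theorem~2.2]'', and your method-of-steps reduction is precisely that derivation (indeed the same step-by-step construction via $g(t,x)=f(t,x,\phi(t-1))$ is used explicitly in the paper's proof of the subsequent continuous-dependence theorem). Your handling of the blow-up alternative is also sound, since on the final subinterval the history lies entirely in the previously constructed (hence bounded) portion of the solution, reducing the escape criterion to the ordinary Carath\'eodory case.
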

\subsection{Continuity with respect to $\T_B$, $\T_D$ and $\sigma_D$}\label{subsec:classictopologies}
As follows, we prove the first results of continuous variation on initial data for solutions of problems like \eqref{eq:solLCEDDE} and deduce the continuity of the generated skew-product semiflows with respect to the classic topologies $\T_B$, $\T_D$ and $\sigma_D$. The statement show a parallelism with the analogous results in \cite{paper:LNO1} and \cite{paper:LNO2} for Carath\'eodory ordinary differential equations.
\begin{thm}\label{thm:ContinuousTBdelay}
Consider a sequence $(f_n)_\nin$ in $\LC_p(\R^{2N},\R^N)$ converging to $f$ in $(\LC_p(\R^{2N},\R^N),\T_B)$ and $\big(\phi_{n}(\cdot)\big)_\nin$ in $\mathcal{C}$ converging uniformly to $\phi\in \mathcal{C}$. Then, with the notation of {\rm Theorem~\ref{thm:13.04-16:49}},
\begin{equation*}
 x(\cdot,f_n,\phi_{n}) \xrightarrow{\nti}  x(\cdot,f,\phi)
\end{equation*}
uniformly in any $[-1,T]\subset I_{f,\phi}$.
\end{thm}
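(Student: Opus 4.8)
The plan is to prove continuous dependence by first establishing a uniform a priori bound on the approximating solutions, then using that bound to reduce the problem to a fixed compact region where the Carath\'eodory structure can be controlled, and finally invoking an integral (Gronwall-type) estimate together with the convergence $f_n\xrightarrow{\T_B}f$ to close the argument. Throughout, the key mechanism is the integrated form of \eqref{eq:solLCEDDE}, namely
\begin{equation*}
x(t,f_n,\phi_n)=\phi_n(0)+\int_0^t f_n\big(s,x(s,f_n,\phi_n),x(s-1,f_n,\phi_n)\big)\,ds,
\end{equation*}
which converts derivatives into integrals and thereby matches the integral nature of $\T_B$.

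\emph{Step 1 (uniform bounds).} I would fix a compact interval $[-1,T]\subset I_{f,\phi}$ and first show that, for $n$ large, the solutions $x(\cdot,f_n,\phi_n)$ are defined on all of $[-1,T]$ and are uniformly bounded there. Since $\big(\phi_n(\cdot)\big)_\nin$ converges uniformly, the initial data lie in a fixed ball; and because convergence in $\T_B$ controls $\sup_{x(\cdot)\in C(I,B_j)}\int_I|f_n(t,x(t))|^p\,dt$ uniformly in $n$, the $m$-bounds of the $f_n$ on each $B_j$ are uniformly integrable on compact intervals. Combined with the blow-up alternative in Theorem~\ref{thm:13.04-16:49} and a continuation argument, this yields a radius $R>0$ and an index $n_0$ so that all the functions $x(\cdot,f_n,\phi_n)$ with $n\ge n_0$, together with $x(\cdot,f,\phi)$, take values in $B_R$ on $[-1,T]$.

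\emph{Step 2 (passing to a compact set and equicontinuity).} With values confined to $B_R$, the relevant $m$-bounds live in a fixed $L^p_{loc}$ class, so the family $\{x(\cdot,f_n,\phi_n)\}_{n\ge n_0}$ is equicontinuous on $[-1,T]$ (the increments are controlled by $\int m^{R}_{f_n}$, hence by the uniform $\T_B$ bound via H\"older). By Arzel\`a--Ascoli, every subsequence has a uniformly convergent sub-subsequence; the standard strategy is to show any such limit must equal $x(\cdot,f,\phi)$, which by uniqueness (Theorem~\ref{thm:13.04-16:49}) forces the whole sequence to converge. To identify the limit $y(\cdot)$ of a convergent subsequence, I would pass to the limit in the integral equation, writing $x(s,f_n,\phi_n)\to y(s)$ and $x(s-1,f_n,\phi_n)\to y(s-1)$ uniformly, and using the $\T_B$ convergence of $f_n$ to $f$ on the compact set $C([0,T],B_R)\times C([-1,T-1],B_R)$ of admissible arguments to replace $f_n$ by $f$ inside the integral.

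\emph{The main obstacle} I expect is precisely this last limit passage: the integrand $f_n\big(s,x(s,f_n,\phi_n),x(s-1,f_n,\phi_n)\big)$ has \emph{both} the function $f_n$ varying and its arguments varying, so one cannot directly cite the $\T_B$ seminorm, which only controls $f_n-f$ evaluated along a \emph{fixed} continuous curve. The clean way to handle this is to split the difference,
\begin{equation*}
f_n\big(s,x_n(s),x_n(s-1)\big)-f\big(s,y(s),y(s-1)\big),
\end{equation*}
into $\big[f_n-f\big]\big(s,y(s),y(s-1)\big)$, whose integral over $[0,t]$ tends to $0$ by the definition of $\T_B$ evaluated along the fixed limit curve $(y(\cdot),y(\cdot-1))$, plus a remainder $f_n\big(s,x_n(s),x_n(s-1)\big)-f_n\big(s,y(s),y(s-1)\big)$ controlled by the Lipschitz bound (L): this remainder is dominated by $\int_0^t l^{R}_{f_n}(s)\big(|x_n(s)-y(s)|+|x_n(s-1)-y(s-1)|\big)\,ds$. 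Showing the $l$-bounds $l^R_{f_n}$ stay uniformly integrable (again from the hypotheses, or by extracting a further subsequence) lets one bound this term and, via a Gronwall argument on the delayed integral inequality (handled interval-by-interval on $[0,1],[1,2],\dots$ so that the delayed term is already known), conclude $x_n\to y=x(\cdot,f,\phi)$ uniformly. Uniqueness then removes the subsequence and delivers the full-sequence convergence on $[-1,T]$.
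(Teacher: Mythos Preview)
Your overall strategy---compactness via Arzel\`a--Ascoli plus identification of the limit through the integral equation---is a legitimate route and genuinely different from the paper's, which instead reduces the delay problem to an ODE on each successive unit interval (the method of steps): on $[0,1]$ one sets $g_n(t,x)=f_n\big(t,x,\phi_n(t-1)\big)$, shows $g_n\to g$ in $(\LC_p(\R^N,\R^N),\T_B)$, cites the known continuous-dependence theorem for Carath\'eodory ODEs, and then iterates. The paper's approach is shorter because it offloads all the analytic work to the ODE case; yours is more self-contained but requires you to establish the uniform a priori bound in Step~1 directly, which you only sketch.

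There is, however, a genuine gap in your identification step. You split so that the remainder is controlled by the $l$-bounds $l^R_{f_n}$ of the \emph{approximating} functions, and then assert these stay uniformly integrable ``from the hypotheses, or by extracting a further subsequence.'' Neither works: convergence in $\T_B$ controls the $m$-bounds but says nothing about $l$-bounds, and no subsequence can tame an unbounded family. For instance, $f_n(t,x)=n^{-1/2}\sin(nx_1)$ satisfies $f_n\to0$ in $\T_B$ while its optimal $l$-bound on every $B_j$ is the constant $n^{1/2}\to\infty$. The fix is to reverse the splitting:
\begin{equation*}
\begin{split}
f_n\big(s,x_n(s),x_n(s-1)\big)-f\big(s,y(s),y(s-1)\big)
&=\big[f_n-f\big]\big(s,x_n(s),x_n(s-1)\big)\\
&\quad+\big[f\big(s,x_n(s),x_n(s-1)\big)-f\big(s,y(s),y(s-1)\big)\big].
\end{split}
\end{equation*}
Now the first piece is dominated uniformly by the $\T_B$ seminorm on a suitable $C([0,T],B_j)$ (the curves $\big(x_n(\cdot),x_n(\cdot-1)\big)$ all lie there), and the second by the single fixed $l$-bound $l^{j}_f\in L^p_{loc}$ of the limit $f$. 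With this correction the identification goes through and the extra Gronwall step becomes unnecessary.
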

\begin{proof}
Following the notation of Theorem~\ref{thm:13.04-16:49}, let $I_{f,\phi}=[-1,b_{f,\phi})$ be the maximal interval of definition of the solution of the delay differential problem~\eqref{eq:solLCEDDE}.  For $n\in\N$ we define the family of functions belonging to $\LC_p(\R^{N},\R^N)$
\begin{equation}\label{eq:gng}
\begin{split}
g_n(t,x)&:=
\begin{cases}
f_n\big(t,x,\phi_n(t-1)\big), &\text{if }t\in[0,1]\,,\\
0 &\text{otherwise}\,,
\end{cases}\\
g(t,x)&:=
\begin{cases}
f\big(t,x,\phi(t-1)\big), &\;\;\text{ if }t\in[0,1]\,,\\
0 &\;\;\text{ otherwise}.
\end{cases}
\end{split}
\end{equation}
 Moreover, for any $I=[q_1,q_2]$, $q_1,q_2\in\Q$ and $j\in\N$ one has
\begin{align*}
&\sup_{x(\cdot)\in C(I,B_j)}\bigg[\int_I\left| g_n\big(t,x(t)\big)-g\big(t,x(t)\big) \right|^pdt\bigg]^{1/p}\\
& \le\sup_{x(\cdot)\in C([0,1],B_j)}\bigg[\int_0^1 \left| f_n\big(t,x(t),\phi_n(t-1)\big)-f\big(t,x(t),\phi_n(t-1)\big)\right|^pdt\bigg]^{1/p}\\
& \quad + \sup_{x(\cdot)\in C([0,1],B_j)}\bigg[\int_0^1 \left| f\big(t,x(t),\phi_n(t-1)\big)-f\big(t,x(t),\phi(t-1)\big)\right|^pdt\bigg]^{1/p}\!\!\!\!\!\!=  I_1+ I_2\,.
\end{align*}
Hence, for an appropriate $k\geq j$, we deduce that
\begin{equation}\label{eq:05.10-13-31}
I_1\leq \sup_{(x(\cdot),y(\cdot))\in C([0,1],B_k)}\bigg[\int_0^1  \left| f_n\big(t,x(t),y(t)\big)-f\big(t,x(t),y(t)\big)\right|^pdt\bigg]^{1/p},
\end{equation}
which goes to $0$ as $\nti$ because $(f_n)_\nin$  converges to $f$ in $(\LC_p(\R^{2N},\R^N),\T_B)$, and that
\begin{equation*}\label{eq:05.10-13-32}
I_2\leq\bigg[ \int_0^1 \Big( l^k(t)\Big)^p \left| \phi_n(t-1)-\phi(t-1)\right|^pdt\bigg]^{1/p},
\end{equation*}
which goes to $0$ as $\nti$ due to the uniform convergence of $(\phi_n)_\nin$ to $\phi$ in $\mathcal{C}$, and where $l^k(\cdot)\in L^1_{loc}$ is the optimal $l$-bound for $f$ on $B_k\subset \R^{2N}$. Therefore  we have that $(g_n)_{n\in\N}$ converges to $g$ in
 $(\LC_p(\R^{N},\R^N),\T_B)$,   and since $x(t,f_n,\phi_n)$ and $x(t,f,\phi)$ are respectively the solutions of
\begin{equation}\label{eq:0510-12:26}
\begin{cases}
\dot x=g_n\big(t,x\big)\,,\\
x(0)=\phi_n(0)\,,
\end{cases}
\text{ and }\quad\begin{cases}
\dot x=g\big(t,x\big)\,,\\
x(0)=\phi(0)\,,
\end{cases}
\end{equation}
on $[0,1]$, by Theorem~4 in~\cite{paper:RMGS1} we conclude that $x(\cdot,f_n,\phi_{n}) \xrightarrow{\nti}  x(\cdot,f,\phi)$ on $[0,T]$, where $0\le T<\min\{1,b_{f,\phi}\}$. If $b_{f,\phi}\leq 1$, the above reasoning shows the convergence of the solutions on any compact subset of $[-1,b_{f,\phi})$. On the other hand, if $b_{f,\phi}>1$, the previous reasoning shows the convergence of the solutions on $[-1,1]$ and  the  recursive iteration of such argument provides the uniform convergence on any compact subset $[-1,T]$ of the maximal interval $I_{f,\phi}$, which ends the proof.
\end{proof}
As a direct consequence of Theorem~\ref{thm:ContinuousTBdelay} and Theorem~\ref{thm:equivalence_topologies_nonhybrid}(i), one can obtain an analogous result for the topology $\T_D$, whose proof is omitted.
\begin{thm}\label{thm:ContinuousTDdelay}
Consider a set $E\subset\LC_p(\R^{2N},\R^N)$ with $L^1_{loc}$-bounded $l$-bounds, a sequence $(f_n)_\nin$ in $E$ converging to $f$ in $(\LC_p(\R^{2N},\R^N),\T_D)$ and  a sequence $\big(\phi_{n}(\cdot)\big)_\nin$  in $\mathcal{C}$ converging uniformly to $\phi\in \mathcal{C}$. Then, with the notation of {\rm Theorem~\ref{thm:13.04-16:49}}, one has
\begin{equation*}
 x(\cdot,f_n,\phi_{n}) \xrightarrow{\nti}  x(\cdot,f,\phi)
\end{equation*}
uniformly in any $[-1,T]\subset I_{f,\phi}$.
\end{thm}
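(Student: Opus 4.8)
The plan is to reduce the statement to the already-proved case of the topology $\T_B$ (Theorem~\ref{thm:ContinuousTBdelay}) by showing that, on the relevant set of vector fields, the hypothesis of $\T_D$-convergence is in fact equivalent to $\T_B$-convergence. Once the $\T_D$-convergent sequence $(f_n)_\nin$ is seen to converge to $f$ also in $\T_B$, the conclusion follows verbatim from Theorem~\ref{thm:ContinuousTBdelay}, since the uniform convergence $\phi_n\to\phi$ in $\mathcal{C}$ is already assumed.

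First I would identify the set on which the topologies are to be compared. Since $(f_n)_\nin\subset E$ converges to $f$ in $(\LC_p(\R^{2N},\R^N),\T_D)$, the limit belongs to $\overline E:=\mathrm{cls}_{(\SC_p(\R^{2N},\R^N),\T_D)}(E)$. By Proposition~\ref{prop:07.07-19:44p=1} the closure $\overline E$ is again contained in $\LC_p(\R^{2N},\R^N)$ and inherits from $E$ the boundedness of the $l$-bounds, so that the hypotheses of Theorem~\ref{thm:equivalence_topologies_nonhybrid}(i) are met on $\overline E$.

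Next I would invoke Theorem~\ref{thm:equivalence_topologies_nonhybrid}(i) with $\T_1=\T_D$ and $\T_2=\T_B$: applied to $\overline E$ it gives that these two strong topologies induce one and the same (metrizable) topology on $\overline E$, and that $\mathrm{cls}_{(\SC_p,\T_D)}(E)=\mathrm{cls}_{(\SC_p,\T_B)}(E)=\overline E$. In particular $f\in\overline E$, and the sequence $(f_n)_\nin\subset E\subset\overline E$, which converges to $f$ in $\T_D$, converges to the same limit $f$ in $\T_B$. At this point Theorem~\ref{thm:ContinuousTBdelay} applies directly to $(f_n)_\nin$ and $(\phi_n)_\nin$ and yields $x(\cdot,f_n,\phi_n)\to x(\cdot,f,\phi)$ uniformly on every $[-1,T]\subset I_{f,\phi}$.

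The only delicate point is the transfer of convergence from $\T_D$ to $\T_B$: the equivalence of the two topologies on $E$ alone would not suffice, because the limit $f$ need not lie in $E$. The correct order of operations is therefore to first pass to the closure $\overline E$, then use Proposition~\ref{prop:07.07-19:44p=1} to guarantee that $\overline E$ still has bounded $l$-bounds, and only then apply the equivalence of $\T_D$ and $\T_B$ on $\overline E$; this is what legitimately moves the limit point $f$ between the two topologies. Everything else is a mechanical reduction to Theorem~\ref{thm:ContinuousTBdelay}.
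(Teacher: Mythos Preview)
Your proposal is correct and follows essentially the same route as the paper, which simply states that the result is ``a direct consequence of Theorem~\ref{thm:ContinuousTBdelay} and Theorem~\ref{thm:equivalence_topologies_nonhybrid}(i)'' and omits the proof. Your additional care in invoking Proposition~\ref{prop:07.07-19:44p=1} to ensure that the closure $\overline E$ inherits the $l$-bound hypothesis, so that the equivalence of $\T_D$ and $\T_B$ genuinely holds at the limit point $f$, is a sensible elaboration of what the paper leaves implicit.
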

As regards the topology $\sigma_D$, the idea is, once again, to use Theorem~\ref{thm:equivalence_topologies_nonhybrid}(ii) so that for any specific converging sequence of initial data, one selects the appropriate suitable set of moduli of continuity as shown in the following result which also permits to simplify the proof of Theorem 3.8 in \cite{paper:LNO2}.
\begin{thm}\label{thm:ContinuousSigmaDdelay}
Consider $E\subset\LC(\R^{2N},\R^N)$  with $L^1_{loc}$-equicontinuous $m$-bounds and let $\Theta$ be the suitable set of moduli of continuity given by the $m$-bounds.
\begin{itemize}[leftmargin=20pt]\setlength\itemsep{0.3em}
\item[\rm (i)]   Let $\big(\phi_{n}(\cdot)\big)_\nin$ be a sequence in $\mathcal{C}$ converging uniformly to $\phi\in \mathcal{C}$ and let $\theta_0$ be the modulus of continuity shared by all the functions in $\{\phi_n\mid\nin\}\cup\{\phi\}$ (such a $\theta_0$ exists thanks to Ascoli-Arzel\'a's theorem). Furthermore, let  $\overline\Theta$ be the suitable set of moduli of continuity whose elements are constructed as follows: for any $I=[q_1,q_2]$, $q_1,q_2\in\Q$ and $j\in\N$
\begin{equation*}
\bar\theta^I_j(s)=\max\{\theta^I_j(s) ,\,\theta_0(s)\}.
\end{equation*}
If $(f_n)_\nin$ is a sequence in $E$ converging to $f$ in $(\LC(\R^{2N},\R^N),\sigma_{\smash{\Theta\overline\Theta}})$, then, with the notation of {\rm Theorem~\ref{thm:13.04-16:49}}, one has that
\begin{equation*}
 x(\cdot,f_n,\phi_{n}) \xrightarrow{\nti}  x(\cdot,f,\phi)
\end{equation*}
uniformly in any $[-1,T]\subset I_{f,\phi}$.
\item[\rm (ii)] If additionally $E$ has $L^1_{loc}$-bounded $l$-bounds, then the result keeps holding true when $(f_n)_\nin$ is a sequence in $E$ converging to $f$ in $(\LC(\R^{2N},\R^N),\sigma_D)$.
\end{itemize}
\end{thm}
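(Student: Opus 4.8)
The plan is to reduce the delay problem to a non-autonomous Carath\'eodory ODE on $[0,1]$, exactly as in the proof of Theorem~\ref{thm:ContinuousTBdelay}, and then to invoke the known continuous-dependence result for ordinary Carath\'eodory equations with respect to $\sigma_\Theta$. For $\nin$ put
\[
g_n(t,x):=f_n\big(t,x,\phi_n(t-1)\big),\qquad g(t,x):=f\big(t,x,\phi(t-1)\big)\qquad(t\in[0,1]),
\]
both extended by $0$ outside $[0,1]$, so that $x(\cdot,f_n,\phi_n)$ and $x(\cdot,f,\phi)$ solve $\dot x=g_n(t,x)$, $x(0)=\phi_n(0)$ and $\dot x=g(t,x)$, $x(0)=\phi(0)$ on $[0,1]$. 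Choosing $k\in\N$ large enough that the solution and every $\phi_n$ take values in $B_k$, the $m$-bound of $g_n$ on $B_j$ is dominated by $m_{f_n}^k$; hence the $g_n$ inherit from $E$ the $L^1_{loc}$-equicontinuity of their $m$-bounds, with modulus exactly the $\Theta$ attached to the $m$-bounds, and the solutions on $[0,1]$ have modulus $\theta^{[0,1]}_k\in\Theta$. The crux of part~(i) is therefore to prove that $(g_n)_\nin$ converges to $g$ in $\big(\LC(\R^N,\R^N),\sigma_\Theta\big)$, after which the continuous-dependence theorem for Carath\'eodory ordinary differential equations with respect to $\sigma_\Theta$ from~\cite{paper:LNO2} gives uniform convergence of the solutions on any $[0,T]$ with $T<\min\{1,b_{f,\phi}\}$.

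To establish $g_n\xrightarrow{\sigma_\Theta}g$, I would fix $I=[q_1,q_2]\subseteq[0,1]$ and $j\in\N$ and, for $x(\cdot)\in\K^I_j$, split
\[
\int_I\big[g_n\big(t,x(t)\big)-g\big(t,x(t)\big)\big]\,dt
=\int_I\big[f_n-f\big]\big(t,x(t),\phi_n(t-1)\big)\,dt
+\int_I\big[f\big(t,x(t),\phi_n(t-1)\big)-f\big(t,x(t),\phi(t-1)\big)\big]\,dt.
\]
The second integral is bounded, through condition~(L) for the single function $f$, by $\|\phi_n-\phi\|_\infty\int_I l^k(s)\,ds$, which tends to $0$ uniformly in $x(\cdot)$ by the uniform convergence of $(\phi_n)_\nin$. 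For the first integral I would use the hypothesis $f_n\xrightarrow{\sigma_{\Theta\overline\Theta}}f$: since all the $\phi_n$ share the modulus $\theta_0$ and $\bar\theta^{I-1}_k\ge\theta_0$ by construction, each $\phi_n$ lies in the compact set $\overline\K^{I-1}_k$ associated with $\overline\Theta$, so the first integral is dominated, uniformly in $x(\cdot)\in\K^I_k$, by
\[
\sup_{x(\cdot)\in\K^I_k,\,u(\cdot)\in\overline\K^{I-1}_k}\bigg|\int_I\big[f_n-f\big]\big(t,x(t),u(t-1)\big)\,dt\bigg|\xrightarrow{\nti}0,
\]
where Lemma~\ref{lem:conv-subintHYBRID} secures the convergence also on the rational subintervals $I\subseteq[0,1]$. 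Taking the supremum over $x(\cdot)\in\K^I_j$ then yields $g_n\xrightarrow{\sigma_\Theta}g$.

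Once the convergence is known on $[0,1]$ I would iterate over successive unit intervals: on $[1,2]$ the new history is the restriction to $[0,1]$ of $x(\cdot,f_n,\phi_n)$, which converges uniformly to that of $x(\cdot,f,\phi)$ and shares the modulus $\theta^{[0,1]}_k\le\bar\theta^{[0,1]}_k$; thus these histories again lie in the relevant $\overline\K$ and the estimate above applies unchanged, so the recursion covers every $[-1,T]\subset I_{f,\phi}$. Part~(ii) then follows at once: when $E$ has $L^1_{loc}$-bounded $l$-bounds, Theorem~\ref{thm:equivalence_topologies_nonhybrid}(ii) makes all the weak topologies of~\eqref{eq:DelayTopologiesChain} coincide on $E$, and in particular $\sigma_D=\sigma_{\Theta\overline\Theta}$ there, so $\sigma_D$-convergence of $(f_n)_\nin$ automatically upgrades to $\sigma_{\Theta\overline\Theta}$-convergence and part~(i) applies verbatim.

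The step I expect to be the main obstacle is the first integral, i.e.\ controlling $f_n-f$ when the history slot carries the \emph{moving} function $\phi_n(t-1)$ rather than a fixed point of $D$, for which plain $\sigma_D$-convergence says nothing uniform in $n$. The device that resolves it is precisely the construction of $\overline\Theta$: replacing $\theta^I_j$ by its pointwise maximum with $\theta_0$ forces the entire sequence $(\phi_n)_\nin$ into a single compact set $\overline\K^{I-1}_k$, so that the supremum defining the $\sigma_{\Theta\overline\Theta}$-seminorm absorbs every $\phi_n$ at once and renders the convergence uniform in $n$.
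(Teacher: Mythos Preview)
Your proposal is correct and follows essentially the same route as the paper's own proof: the reduction to the auxiliary ODEs $\dot x=g_n(t,x)$ via \eqref{eq:gng}, the two-term splitting of $\int_I[g_n-g]$ with the Lipschitz bound for the second term and the $\sigma_{\Theta\overline\Theta}$-seminorm (together with Lemma~\ref{lem:conv-subintHYBRID}) for the first, the appeal to the $\sigma_\Theta$ continuous-dependence result of~\cite{paper:LNO2}, the unit-interval iteration, and the deduction of~(ii) from Theorem~\ref{thm:equivalence_topologies_nonhybrid}(ii) all match the paper. The only cosmetic difference is that the paper treats a general interval $I$ and passes to $I\cap[0,1]$ (enlarging to a rational $J\supset I\cup[0,1]$ before invoking Lemma~\ref{lem:conv-subintHYBRID}), whereas you restrict to $I\subseteq[0,1]$ from the start; since $g_n,g$ vanish outside $[0,1]$ this is harmless.
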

\begin{proof}
The proof  of (i) can be carried out with a reasoning similar to the one used in the proof of Theorem~\ref{thm:ContinuousTBdelay}. Again,  define $g$ and $g_n$, with $\nin$, as in \eqref{eq:gng}.  Let $\overline K_i^I$ be the set of functions in $C(I,B_j)$ which admit $\overline\theta^I_j$ as a modulus of continuity.  Consider $i\in\N$, such that $\{\phi_n(\cdot)\mid\nin\}\cup\{\phi\}\subset\overline\K^{[-1,0]}_i$,  $I=[q_1,q_2]$, $q_1,q_2\in\Q$, $j\in\N$, and let $k\in\N$ be an integer such that $k\ge \max\{i,j\}$. Denoting by $l^{2k}(\cdot)\in L^1_{loc}$ the optimal $l$-bound for $f$ on $B_{2k}\subset \R^{2N}$  and using the triangular inequality, one has that
\begin{equation}\label{eq:05.10-11:41}
\begin{split}
\!\!\!\sup_{x(\cdot)\in \K^{I}_j}\left|\int_I\big[ g_n\big(t,x(t)\big)-g\big(t,x(t)\big)\big] \,dt\right|\le \int_0^1  l^{2k}(t) \left| \phi_n(t-1)-\phi(t-1)\right|\,dt& \,+\\[-0.3ex]
 +\sup_{\substack{x(\cdot)\in \K^{I}_k,\\[0.5ex] y(\cdot) \in \overline\K^{[-1,0]}_{k}}}\bigg| \int_{I\cap[0,1]} \big[ f_n\big(t,x(t),y(t-1)\big)-f\big(t,x(t),y(t-1)\big)\big]\,dt&\,\bigg|.
\end{split}
\end{equation}
In particular, the first integral on the right-hand side of the previous inequality goes to $0$ as $\nti$ due to the uniform convergence of $(\phi_n)_\nin$ to $\phi$ in $\mathcal{C}$.  As regards the second integral, notice that the case $I\cap[0,1]=\varnothing$ is trivial; on the other hand, if $I\cap[0,1]$ is nonempty, one can consider an interval $J$ with rational extrema such that $I\cup[0,1]\subset J$ and, up to extending the functions in $\K^{I}_k$ and $ \overline\K^{[-1,0]}_{k}$ by constants to $J$ and $J-1$ respectively, one has that, since $(f_n)_\nin$ converges to $f$ in $(\LC(\R^{2N},\R^N),\sigma_{\smash{\Theta\overline\Theta}})$, then, thanks to Lemma~\ref{lem:conv-subintHYBRID},
\[
\lim_\nti\sup_{\substack{x(\cdot)\in \K^{J}_k,\\[0.5ex] y(\cdot) \in \overline\K^{J-1}_{k}}}\bigg| \int_{I\cap[0,1]} \big[ f_n\big(t,x(t),y(t-1)\big)-f\big(t,x(t),y(t-1)\big)\big]\,dt\bigg|=0.
\]
Consequently, also the second integral at the right-hand side of \eqref{eq:05.10-11:41} goes to zero as $\nti$.
\par\smallskip
Therefore  we have that $(g_n)_{n\in\N}$ converges to $g$ in $(\LC(\R^{N},\R^N),\sigma_{\Theta})$, and since $x(t,f_n,\phi_n)$ and $x(t,f,\phi)$ are respectively the solutions of the systems in \eqref{eq:0510-12:26}, then, applying Theorem 3.8 in \cite{paper:LNO2}, we have that $x(\cdot,f_n,\phi_{n}) \xrightarrow{\nti}  x(\cdot,f,\phi)$  on $[0,T]$, where $0\le T<\min\{1,b_{f,\phi}\}$. If $b_{f,\phi}\leq 1$ the above reasoning shows the convergence of the solutions on any compact subset of $[-1,b_{f,\phi})$. On the other hand, if $b_{f,\phi}>1$, the previous reasoning shows the convergence of the solutions on $[-1,1]$ and  the  recursive iteration of such argument provides the uniform convergence on any compact subset $[-1,T]$ of the maximal interval $I_{f,\phi}$, which finishes the proof of (i).\par\smallskip
The proof of (ii) is a consequence of (i) and the fact that, due to  Theorem~\ref{thm:equivalence_topologies_nonhybrid}(ii), $\sigma_{\smash{\Theta\widehat\Theta}}$ coincides with $\sigma_D$ under the given assumptions.
\end{proof}
As a consequence of the previous theorems,  one can deduce a first result of continuity of the skew-product semiflow composed of the time translation of the initial vector field and the solutions of the respective delay differential equation.\par\smallskip
Consider $f\in\LC_p(\R^{2N},\R^N)$ and any dense numerable subset $D$ of $\R^N$.
With the notation introduced in Theorem~\ref{thm:13.04-16:49}, let us denote by $\U_{(E,\T)}$ the subset of $\R\times\mathrm{Hull}_{(E,\T)}(f)\times \mathcal{C}$ given by
\begin{equation*}
\U_{(E,\T)}=\bigcup_{g\in\mathrm{Hull}_{(E,\T)}(f)\,,\,\phi\in \mathcal{C}}
\{(t,g,\phi)\mid t\in I_{g,\phi}\}
\end{equation*}
where $(E,\T)\in\{(\LC_p,\T_B), (\LC_p,\T_D), (\LC,\sigma_D)\}$\,.
\begin{thm}
Consider $f\in\LC_p(\R^{2N},\R^N)$ and the map
\begin{equation}\label{eq:skewproductsemiflow}
\begin{split}
 \U_{(E,T)}\subset \R\times\mathrm{Hull}_{(E,\T)}(f)\times \mathcal{C}\;  &\to\; \mathrm{Hull}_{(E,\T)}(f)\times \mathcal{C}\\
 (t,g,\phi) \qquad \qquad &\mapsto \;\; \ \big(g_t, x_t(\cdot,g,\phi)\big)\,.
\end{split}
\end{equation}
\begin{itemize}[leftmargin=20pt]\setlength\itemsep{0.3em}
\item[\rm (i)]
If $(E,\T)=(\LC_p,\T_B)$  the map~\eqref{eq:skewproductsemiflow}
defines a local continuous skew-product semiflow on $\mathrm{Hull}_{(\LC_p,\T_B)}(f)\times \mathcal{C}$.
\item[\rm (ii)] If  $f$ admits $L^p_{loc}$-bounded $l$-bounds and $(E,\T)=(\LC_p,\T_D)$,
the map~\eqref{eq:skewproductsemiflow} defines a local continuous skew-product semiflow on $\mathrm{Hull}_{(\LC_p,\T_D)}(f)\times \mathcal{C}$.
\item[\rm (iii)] If  $f$ has $L^1_{loc}$-equicontinuous $m$-bounds, $L^1_{loc}$-bounded $l$-bounds and $(E,\T)=(\LC,\sigma_D)$, the map~\eqref{eq:skewproductsemiflow} defines a local continuous skew-product semiflow on $\mathrm{Hull}_{(\LC,\sigma_D)}(f)\times \mathcal{C}$.
\end{itemize}
\end{thm}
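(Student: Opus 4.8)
The plan is to verify the three ingredients of a local continuous skew-product semiflow: a continuous base flow $(t,g)\mapsto g_t$ on $\mathrm{Hull}_{(E,\T)}(f)$, the cocycle identity for the fibre maps $u(t,g,\phi):=x_t(\cdot,g,\phi)$, and continuity of the full map \eqref{eq:skewproductsemiflow} on an open domain $\U_{(E,\T)}$. Since time translation preserves $\LC_p(\R^{2N},\R^N)$, the hull is well defined in each of the three topologies, and by Proposition~\ref{prop:07.07-19:44p=1} it inherits from $\{f_t\mid t\in\R\}$ exactly the boundedness or equicontinuity hypotheses imposed in (ii) and (iii); the continuity of the shift in $\T_B$, $\T_D$ and $\sigma_D$ (under these hypotheses) is the flow-continuity property established in the companion works \cite{paper:LNO1,paper:LNO2}, and together with the group law $g_{t+s}=(g_t)_s$ this yields the base flow. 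By Theorem~\ref{thm:13.04-16:49} each pair $(g,\phi)$ with $g$ in the hull admits a unique maximal solution, so $u(t,g,\phi)$ is defined for $t\in I_{g,\phi}$ and satisfies $u(0,g,\phi)=\phi$; the translation relation $x(t+s,g,\phi)=x(t,g_s,x_s(\cdot,g,\phi))$ follows from uniqueness, since both sides solve the same Carath\'eodory delay problem driven by $g_s$ with the same history segment at time $0$, and rereading this in terms of segments gives the cocycle identity $u(t+s,g,\phi)=u(t,g_s,u(s,g,\phi))$.

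For the continuity of \eqref{eq:skewproductsemiflow}, I would take a sequence $(t_n,g_n,\phi_n)\to(t,g,\phi)$ in $\U_{(E,\T)}$. The first component converges because the base flow is continuous. For the second component I would invoke the matching continuous-dependence result: Theorem~\ref{thm:ContinuousTBdelay} in case (i); Theorem~\ref{thm:ContinuousTDdelay} in case (ii), whose $L^p_{loc}$-bounded $l$-bound hypothesis is precisely what Proposition~\ref{prop:07.07-19:44p=1} transfers to the hull; and Theorem~\ref{thm:ContinuousSigmaDdelay}(ii) in case (iii), whose requirements of $L^1_{loc}$-equicontinuous $m$-bounds and $L^1_{loc}$-bounded $l$-bounds are again furnished on the hull by the same proposition (the underlying mechanism being the equivalence $\sigma_D=\sigma_{\Theta\overline\Theta}$ on the hull supplied by Theorem~\ref{thm:equivalence_topologies_nonhybrid}(ii)). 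Each of these gives $x(\cdot,g_n,\phi_n)\to x(\cdot,g,\phi)$ uniformly on every compact interval $[-1,T]\subset I_{g,\phi}$.

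The main obstacle, which I would treat last, is to upgrade this locally uniform convergence of the solutions, together with $t_n\to t$, to convergence of the translated segments in $\mathcal{C}$, and at the same time to check that $\U_{(E,\T)}$ is open so that the semiflow is genuinely local. The openness amounts to lower semicontinuity of $b_{g,\phi}$: fixing $T$ with $t<T<b_{g,\phi}$, the uniform convergence on $[-1,T]$ keeps the approximating solutions defined and bounded up to $T$ for all large $n$ (otherwise the blow-up alternative in Theorem~\ref{thm:13.04-16:49} would be violated in the limit), so $(t_n,g_n,\phi_n)\in\U_{(E,\T)}$ eventually. For the segment convergence I would estimate, for $s\in[-1,0]$,
\[
\big|x(t_n+s,g_n,\phi_n)-x(t+s,g,\phi)\big|\le \|x(\cdot,g_n,\phi_n)-x(\cdot,g,\phi)\|_{\infty,[-1,T]}+\big|x(t_n+s,g,\phi)-x(t+s,g,\phi)\big|,
\]
bounding the first term by the uniform solution convergence on $[-1,T]$ and the second by the uniform continuity of the limit solution near $t$, through $|t_n-t|$. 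Taking the supremum over $s\in[-1,0]$ yields convergence in $\|\cdot\|_\infty$, that is, in $\mathcal{C}$, which completes the verification of continuity and hence establishes the local continuous skew-product semiflow in each of the three cases.
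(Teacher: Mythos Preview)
Your proposal is correct and follows essentially the same approach as the paper: the paper's own proof is a one-line citation of \cite[Corollary 3.4]{paper:LNO1} (for continuity of the base shift) together with Theorems~\ref{thm:ContinuousTBdelay}, \ref{thm:ContinuousTDdelay} and \ref{thm:ContinuousSigmaDdelay} (for the fibre continuity), and you invoke exactly these ingredients, supplemented by Proposition~\ref{prop:07.07-19:44p=1} to propagate the $m$- and $l$-bound hypotheses to the hull. The additional verifications you supply---the cocycle identity via uniqueness, lower semicontinuity of $b_{g,\phi}$ for openness of $\U_{(E,\T)}$, and the triangle-inequality estimate yielding segment convergence in $\mathcal{C}$---are the routine details the paper leaves implicit, and they are handled correctly.
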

\begin{proof}
The proof is a direct consequence of \cite[Corollary 3.4]{paper:LNO1}, of Theorem~\ref{thm:ContinuousTBdelay}, Theorem~\ref{thm:ContinuousTDdelay} and Theorem~\ref{thm:ContinuousSigmaDdelay}.
\end{proof}
\subsection{Continuity with respect to $\T_{\Theta B}$,  $\T_{\Theta D}$ and $\sigma_{\Theta D}$}\label{subsec:Ttx}
As follows, we want to look into the new hybrid topologies $\T_{\Theta B}$,  $\T_{\Theta D}$ and $\sigma_{\Theta D}$ presented in Definition~\ref{def:hybridtopologies} and see which advantages they have with respect to $\T_B$, $\T_D$ and $\sigma_D$. Besides the already noticed relations of order given
in~\eqref{eq:DelayTopologiesChain}, we will show how $\T_{\Theta B}$,  $\T_{\Theta D}$ and $\sigma_{\Theta D}$ are the natural choice in the case of delay differential equations. Indeed, the $m$-bounds of the vector field provide valuable information on the modulus of continuity of the solutions so that for the first $N$ components of the space variables, accounting for the present state of the solution, we precisely know on which compact set of continuous function is sufficient to ask for convergence.\par\smallskip
On one hand, this idea allows to restrict the requirement of convergence on bounded sets of continuous functions only to the last $N$ components of the space variables, accounting for the past state of the solution (see $\T_{\Theta B}$ in Definition \ref{def:hybridtopologies}), so that also any possible initial data is covered. On the other hand, if one aims to use some kind of point-wise convergence (either $\T_{\Theta D}$ or $\sigma_{\Theta D}$) the same reasoning permits to obtain the result with assumptions on the sole $l_2$-bounds instead of the whole $l$-bounds of the vector field.
\par\smallskip
As a first step we prove the continuity of the flow defined by the time-translations for such hybrid topologies.
\begin{thm}\label{thm:cont_time_transl_hybrid}
Let $\Theta$ and $\smash{\widehat\Theta}$ be any pair of suitable sets of moduli of continuity, as in {\rm Definition~\ref{def:ssmc}}, and $D$ any dense and countable subset of $\R^N$. The following statements hold.
\begin{itemize}[leftmargin=20pt]\setlength\itemsep{0.3em}
\item[{\rm (i)}] The map
\begin{equation*}
\Pi:\R\times \LC_p(\R^{2N},\R^N)\to \LC_p\big(\R^{2N},\R^N)\, ,\qquad (t,f)\mapsto\Pi(t,f)=f_t\, ,
\end{equation*}
defines a continuous flow on $\left(\LC_p(\R^{2N},\R^N),\T\right)$, with $\T\in\{\T_{\Theta B}, \T_{\Theta \smash{\widehat\Theta}}, \T_{\Theta D}\}$.
\item[\rm(ii)] Moreover, if $E\subset\LC(\R^{2N},\R^N)$ has $L^1_{loc}$-equicontinuous $m$-bounds and it is such that $f\in E$ implies $f_t\in E$ for all $t\in\R$ , then the map
\begin{equation*}
\Pi:\R\times E\to E\, ,\qquad (t,f)\mapsto\Pi(t,f)=f_t\, ,
\end{equation*}
defines a continuous flow on $\left(E,\sigma\right)$, where $\sigma\in\{\sigma_{\Theta \smash{\widehat\Theta}}, \sigma_{\Theta D}\}$.
\end{itemize}
\end{thm}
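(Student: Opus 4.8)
The plan is to verify the two flow axioms and then reduce the continuity of $\Pi$ to two separate estimates, exploiting that all the spaces in play are metric (so sequential arguments suffice) and that each topology is generated by seminorms that behave well under the linear translation operators $T_s\colon g\mapsto g_s$. The flow axioms are immediate from \eqref{eq:time-translations}: one has $\Pi(0,f)=f_0=f$, and $\Pi(t,\Pi(s,f))=(f_s)_t=f_{s+t}=\Pi(s+t,f)$ because $(f_s)_t(\sigma,\cdot)=f(\sigma+t+s,\cdot)$. For continuity at a point $(t_0,f)$ I take $(t_n,f_n)\to(t_0,f)$ and, for each generating seminorm $p$ (attached to a rational interval $I$ and an index $j$), I use linearity of $T_{t_n}$ to split
\[
p\big(\Pi(t_n,f_n)-\Pi(t_0,f)\big)\le p\big((f_n-f)_{t_n}\big)+p\big(f_{t_n}-f_{t_0}\big).
\]
The first summand is continuity in the function argument, uniformly over the bounded set of times $\{t_n\}$; the second is continuity of the orbit map $t\mapsto f_t$ for the fixed limit.

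For the first summand I establish a translation bound. Given $R\ge\sup_n|t_n|$, I fix a rational interval $J\supseteq I\cup\bigcup_{|s|\le R}(I+s)$. For the strong seminorms $\T\in\{\T_{\Theta B},\T_{\Theta\smash{\widehat\Theta}},\T_{\Theta D}\}$, the change of variables $\tau=t+s$ rewrites $p\big((f_n-f)_s\big)$ as an integral of $|f_n-f|^p$ over $I+s$ with the inner functions replaced by their translates $x(\cdot-s)$; these keep the modulus $\theta^I_j$, so extending them by constants to $J$ (as in the proof of Lemma~\ref{lem:alternative-seminorms}) lands them in $\K^J_j$ by the monotonicity built into Definition~\ref{def:ssmc}. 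Since the integrand is nonnegative, enlarging the domain to $J$ yields a bound by a seminorm of $f_n-f$ on $J$, uniformly in $|s|\le R$, and this tends to $0$ because $f_n\to f$. For the weak seminorms $\sigma\in\{\sigma_{\Theta D},\sigma_{\Theta\smash{\widehat\Theta}}\}$ positivity is lost, so I instead keep the integral over the moving interval $I+t_n$, approximate it by a fixed rational subinterval of $J$ near $I+t_0$, and use Lemma~\ref{lem:conv-subintHYBRID} (together with Lemma~\ref{lem:alternative-seminorms} for $\sigma_{\Theta\smash{\widehat\Theta}}$) to send the rational-interval part to $0$ uniformly in the inner functions.

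For the second summand it suffices, by the cocycle identity, to prove $p(g_\ep-g)\to0$ as $\ep\to0$ for a fixed $g$ (namely $g=f_{t_0}$). In the strong case I over-estimate the supremum over the inner functions by a supremum over points of a ball $B_k\supseteq B_j\times B_j$, reducing the task to $\int_I\big(\sup_{y\in B_k}|g(t+\ep,y)-g(t,y)|\big)^p\,dt\to0$; choosing a finite $\eta$-net of $B_k$ and using the optimal $l$-bound $l^{B_k}\in L^p_{loc}$ of $g$, the net error is $O(\eta^p)$ uniformly in $\ep$, while for each net point $\int_I|g(t+\ep,y_m)-g(t,y_m)|^p\,dt\to0$ by $L^p$-continuity of translation, giving $\limsup_{\ep\to0}\le C\eta^p$ for every $\eta$. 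In the weak case the change of variables produces an overlap integral bounded by $\theta^I_j(\ep)\int_I l^{B_k}(t)\,dt\to0$ through the $l$-bound of $g$ and the modulus, plus a boundary integral over $I\triangle(I+\ep)$, which vanishes because the single $m$-bound $m^{B_k}$ of $g$ has absolutely continuous integral.

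The main obstacle is the weak case of the first summand: when $I+t_n$ genuinely moves with $n$, the boundary terms produced by replacing it with a fixed rational interval must be controlled uniformly in $n$, and not merely for the fixed limit. This is exactly where the hypothesis that $E$ has $L^1_{loc}$-equicontinuous $m$-bounds is indispensable: by Definition~\ref{weakcomp} it forces $\sup_{g\in E}\int_S m_g^{B_k}$ to be small whenever $\meas(S)$ is small on a bounded window, so the boundary contributions of all the $f_n\in E$ vanish simultaneously. By contrast, in part~(i) the positivity of the strong integrands removes any boundary issue, which is why no equicontinuity assumption is needed and the statement holds on the whole of $\LC_p(\R^{2N},\R^N)$.
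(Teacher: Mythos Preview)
Your argument is correct, but it follows a genuinely different route from the paper's. The paper does not split $p((f_n)_{t_n}-f_{t_0})$ directly; instead, for each fixed $x(\cdot)\in\K^I_j$ it freezes the first spatial slot by defining $\widehat f_n(s,u)=f_n(s,x(s-t_n),u)$ and $\widehat f(s,u)=f(s,x(s-t),u)$, checks that $\widehat f_n\to\widehat f$ in the corresponding non-hybrid topology on $\LC_p(\R^N,\R^N)$ (namely $\T_B$, $\T_{\smash{\widehat\Theta}}$ or $\T_D$), and then quotes the already-established continuity of the translation flow in those simpler spaces (Miller--Sell for $\T_B$, \cite[Theorem~3.1]{paper:LNO1} for $\T_{\smash{\widehat\Theta}}$, \cite[Theorem~3.1]{paper:LNO2} for the weak cases) to get $(\widehat f_n)_{t_n}\to\widehat f_t$. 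Uniformity over $x(\cdot)\in\K^I_j$ is then recovered by a compactness-and-contradiction step: assume it fails, extract a convergent sequence $x_k\to x$ in $\K^I_j$, rebuild the auxiliary functions, and reach a contradiction.

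What each approach buys: the paper's argument is modular---it recycles continuity of the base flow in the lower-dimensional, non-hybrid topologies and hides all the hard analysis inside those citations, at the price of an extra compactness layer to upgrade from ``for each $x$'' to ``uniformly in $x$''. Your argument is self-contained and quantitative: the enlargement-to-$J$ trick handles uniformity in $x$ from the outset, and the $\eta$-net/$l$-bound computation for $p(g_\varepsilon-g)\to0$ makes the role of the Lipschitz structure explicit. Your treatment also makes transparent exactly why $L^1_{loc}$-equicontinuous $m$-bounds are needed in~(ii) and not in~(i)---the boundary correction $\int_{(I+t_n)\triangle[p_1,p_2]}m^k_{f_n}$ must vanish uniformly in~$n$---whereas in the paper this is buried inside the reference to \cite{paper:LNO2}. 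One small imprecision: in your overlap estimate for $\sigma_{\Theta\smash{\widehat\Theta}}$ the bound should carry both moduli, i.e.\ $(\theta^I_j(\varepsilon)+\hat\theta^{I-1}_j(\varepsilon))\int_I l^{B_k}$, not just $\theta^I_j(\varepsilon)$; and for $\sigma_{\Theta D}$ you are implicitly using the $\sigma_\Theta$-analogue of Lemma~\ref{lem:conv-subintHYBRID} (Lemma~2.13(ii) of \cite{paper:LNO2}), which holds by the same proof.
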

\begin{proof}
Let us firstly deal with the case $\left(\LC_p(\R^{2N},\R^N), \T_{\Theta B}\right)$.
Given a sequence $(t_n)_\nin$ of real numbers converging to some $t\in\R$ and a sequence $(f_n)_\nin$ converging to some $f$ in $\left(\LC_p(\R^{2N},\R^N), \T_{\Theta B}\right)$, we shall prove that $\left((f_n)_{t_n}\right)_\nin$ converges to $f_t$ in
$\left(\LC_p(\R^{2N},\R^N), \T_{\Theta B}\right)$.  Thanks to Lemma~\ref{lem:alternative-seminorms}, from the convergence of the sequence $(f_n)_\nin$ to $f$ we deduce that
\begin{equation}\label{fnthetaB}
\sup_{x(\cdot)\in\K_j^I,\; u(\cdot)\in C(I,B_j)}\left[\int_I\big|f_n(s,x(s),u(s))-f(s,x(s),u(s))\big|^p ds \right]^{1/p}\xrightarrow{\nti}  0\,.
\end{equation}
 Fixed a function $x(\cdot)\in \K^I_j$ and extending it by constants to the real line $\R$, we can define the functions $\widehat f,\widehat f_n\in\LC_p(\R^{N},\R^N)$, with $n\in\N$, by
\begin{equation*}
\widehat f (s,u)=f\big(s,x(s-t),u\big)\quad\text{and}\quad \widehat f_n (s,u)=f_n\big(s,x(s- t_n),u\big)\,.
\end{equation*}
Notice that $(\widehat f_n)_\nin$ converges to $\widehat f$ in $\big(\LC_p(\R^{N},\R^N),\T_{B}\big)$ because
\begin{align*}
\sup_{u(\cdot)\in C(I,B_j)}&\left[\int_I\big|f_n(s,x(s-t_n),u(s))-f(s,x(s-t),u(s))\big|^p ds \right]^{1/p}\\
 & \!\!\!\!\! \leq \sup_{u(\cdot)\in C(I,B_j)} \left[\int_I\big|f_n(s,x(s-t_n),u(s))-f(s,x(s-t_n),u(s))\big|^p ds \right]^{1/p}\\
&\; + \sup_{u(\cdot)\in C(I,B_j)} \left[\int_I\big|f(s,x(s-t_n),u(s))-f(s,x(s-t),u(s))\big|^p ds \right]^{1/p}\,,
\end{align*}
and the right-hand side goes to zero as $n\to\infty$ because of~\eqref{fnthetaB} and due to the fact that $f\in \LC_p(\R^{2N},\R^N)$ and $(x(\cdot-t_n))_\nin$ converges to $x(\cdot -t)$ on $I$. Therefore, from Theorem~II.1 in~\cite{book:RMGS}  we deduce that $((\widehat f_n)_{t_n})_\nin$ converges to ${\widehat f}_t$ in $\big(\LC_p(\R^{N},\R^N),\T_B\big)$. More precisely, for any interval $I$ with rational extrema and $j\in\N$ one has
\begin{equation}\label{eq:hatfnf}
\sup_{u(\cdot)\in C(I,B_j)}\bigg[\int_I|f_n\big(s+t_n,x(s),u(s)\big)-f\big(s+t,x(s),u(s)\big)|^p\,ds\bigg]^{1/p}\xrightarrow{\nti} 0\,.
\end{equation}
As a matter of fact, \eqref{eq:hatfnf} is also uniform in $x(\cdot)\in \K^I_j$. Indeed, if this were not the case there would exist an $\ep>0$, a subsequence $(t_{n_k})_{k\in\N}$, and a sequence $(x_k)_{k\in\N}$ in $\K_j^I$ such that
\begin{equation}\label{eq:contradiction}
\ep<\sup_{u(\cdot)\in C(I,B_j)}\bigg[\int_I\big|f_{n_k}\big(s+t_{n_k},x_k(s),u(s)\big)-
f\big(s+t,x_k(s),u(s)\big)\big|^p\,ds\bigg]^{1/p}.
\end{equation}
However, if we consider the functions $g,g_k\in\LC_p(\R^{N},\R^N)$, with $k\in\N$, defined  by
\begin{equation*}
g(s,u)=f\big(s,x(s-t ),u\big)\quad\text{and}\quad g_k (s,u)=f_{n_k}\big(s,x_k(s-t_{n_k} ),u\big)\,,
\end{equation*}
where $x(\cdot)$ is the limit, up to a subsequence, of $\big(x_k(\cdot)\big)_{k\in\N}$ (such limit exists because $\K_j^I$ is compact), we have that $(g_{k})_{k\in\N}$ converges to $g$ in $\big(\LC_p(\R^{N},\R^N),\T_B\big)$. Indeed, for any interval $I$ with rational extrema and any $j\in\N$ one has
\begin{align*}
\sup_{u(\cdot)\in C(I,B_j)}&\left[\int_I\big|f_{n_k}(s,x_k(s-t_{n_k}),u(s))-f(s,x(s-t),u(s))\big|^p ds \right]^{1/p}\\
 & \!\!\!\!\!\!\!\!\! \leq \sup_{u(\cdot)\in C(I,B_j)} \left[\int_I\big|f_{n_k}(s,x_k(s-t_{n_k}),u(s))-f(s,x_k(s-t_{n_k}),u(s))\big|^p ds \right]^{1/p}\\
&\; + \sup_{u(\cdot)\in C(I,B_j)} \left[\int_I\big|f(s,x_k(s-t_{n_k}),u(s))-f(s,x(s-t),u(s))\big|^p ds \right]^{1/p}
\end{align*}
and again the right-hand side of the previous inequality goes to zero as $k\to\infty$  because of~\eqref{fnthetaB} and due to the fact that $f\in \LC_p(\R^{2N},\R^N)$ and $(x_k(\cdot-t_{n_k}))_\nin$ converges to $x(\cdot -t)$ on $I$.
Thus, as above, we deduce that  $((g_k)_{t_{n_k}})_\nin$ converges to ${g}_t$ in $\big(\LC_p(\R^{N},\R^N),\T_B\big)$. This fact, together with the convergence of  $(x_k(\cdot))_{k\in\N}$  to $x(\cdot)$ in $I$, contradicts~\eqref{eq:contradiction}, shows that~\eqref{eq:hatfnf} is uniform in $x(\cdot)\in \K^I_j$, and, thanks to Lemma~\ref{lem:alternative-seminorms}, ends the proof of (i) for $\T_{\Theta B}$.
\par\smallskip
The proof for the topologies $ \T_{\Theta \smash{\widehat\Theta}}$ and  $ \T_{\Theta D}$ can be obtained following similar arguments, using Theorem~3.1 and Remark~3.2 in \cite{paper:LNO1} respectively (instead of Theorem~II.1 in~\cite{book:RMGS}). The proof for the weak hybrid topologies $\sigma_{\Theta \smash{\widehat\Theta}}$ and $ \sigma_{\Theta D}$ can be deduced from the one of Theorem 3.1 in \cite{paper:LNO2} with minor modifications.
\end{proof}
The previous result solves the problem of the continuity of the time-translations for the topologies $\T_{\Theta B}$,  $\T_{\Theta D}$ and $\sigma_{\Theta D}$ so that, in order to tackle the continuity of the skew-product semiflow, now we need to address the question of continuous variation of the solutions.
\begin{thm}\label{thm:ContinuousTTBdelay}
Consider $E\in\LC_p(\R^{2N},\R^N)$ with $L^1_{loc}$-equicontinuous $m$-bounds and let $\Theta$ be the suitable set of moduli of continuity given by the $m$-bounds as in {\rm Definition~\ref{def:THETAjseq}}.  Moreover, let $D$ be any dense and countable subset of $\R^N$. With the notation of {\rm Theorem~\ref{thm:13.04-16:49}}, the following statements hold.
\begin{itemize}[leftmargin=20pt]
\item[{\rm (i)}] If $(f_n)_\nin$ is a sequence in $E$ converging to $f$ in $(\LC_p(\R^{2N},\R^N),\T_{\Theta B})$ and $\big(\phi_{n}(\cdot)\big)_\nin$ is a sequence in $\mathcal{C}$ converging uniformly to some function  $\phi\in \mathcal{C}$, then
\begin{equation*}
 x(\cdot,f_n,\phi_{n}) \xrightarrow{\nti}  x(\cdot,f,\phi)
\end{equation*}
uniformly in any $[-1, T]\subset I_{f,\phi}$.
\item[{\rm (ii)}]  If  $E$ has also $L^1_{loc}$-bounded $l_2$-bounds, $(f_n)_\nin$ is a sequence in $E$ converging to $f$ in $(\LC(\R^{2N},\R^N),\sigma_{\Theta D})$ and $\big(\phi_{n}(\cdot)\big)_\nin$ is a sequence in $\mathcal{C}$ converging uniformly to $\phi\in \mathcal{C}$, then
\begin{equation*}
 x(\cdot,f_n,\phi_{n}) \xrightarrow{\nti}  x(\cdot,f,\phi)
\end{equation*}
uniformly in any $[-1, T]\subset I_{f,\phi}$.
\end{itemize}
\end{thm}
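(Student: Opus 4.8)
The plan is to follow the interval-by-interval reduction to an ordinary differential equation already exploited in Theorems~\ref{thm:ContinuousTBdelay} and~\ref{thm:ContinuousSigmaDdelay}, the decisive new ingredient being that $\Theta$ is the set of moduli of continuity \emph{produced by the $m$-bounds} (Definition~\ref{def:THETAjseq}). Since any solution of~\eqref{eq:solLCEDDE} satisfies $\dot x(t)=f\big(t,x(t),x(t-1)\big)$, on any interval where it stays in $B_k$ one has $|x(t)-x(s)|\le\int_s^t m^k(\tau)\,d\tau\le\theta^I_k(t-s)$; hence the solution pieces belong to the compact sets $\K^I_k$ attached to $\Theta$. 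This is exactly what renders the first $N$ space variables ``visible'' through the $\T_\Theta$/$\sigma_\Theta$ seminorms and is the reason the hybrid topologies suffice. For both parts I would freeze the past, defining $g_n$ and $g$ as in \eqref{eq:gng}, reduce~\eqref{eq:solLCEDDE} on $[0,1]$ to the ordinary Cauchy problems~\eqref{eq:0510-12:26}, prove that $g_n\to g$ in the single-variable topology $\T_\Theta$ (resp. $\sigma_\Theta$) on $\LC_p(\R^N,\R^N)$, invoke the corresponding continuous-dependence theorem for ordinary equations in~\cite{paper:LNO1} (resp. Theorem~3.8 in~\cite{paper:LNO2}), and then iterate on $[1,2],[2,3],\dots$; at each step the new initial data converge uniformly and, being uniformly bounded, share the modulus $\theta_k$, so the recursion closes up to any $T<b_{f,\phi}$.

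For (i) I would estimate the single-variable $\T_\Theta$-seminorm of $g_n-g$ on an interval $I$ by the usual split $I_1+I_2$. The term $I_1$, in which only the vector field changes while the frozen past $\phi_n(\cdot-1)$ is kept, is dominated --- after enlarging $I$ to a rational interval $J\supseteq I\cup[0,1]$ and using that for a strong seminorm the integral over a subinterval is controlled by the integral over $J$ --- by the $\T_{\Theta B}$-seminorm
\[
\sup_{x(\cdot)\in\K^{J}_k,\ u(\cdot)\in C(J-1,B_k)}\left[\int_{J}\big|f_n\big(t,x(t),u(t-1)\big)-f\big(t,x(t),u(t-1)\big)\big|^p\,dt\right]^{1/p}
\]
(Definition~\ref{def:hybridtopologies}), which tends to $0$ by hypothesis; the crucial point is that the \emph{bounded} set $C(J-1,B_k)$ of admissible pasts already contains $\phi_n(\cdot-1)$, so no Lipschitz control in the last variable is needed. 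The term $I_2$, in which the past changes from $\phi_n$ to $\phi$, is bounded by $\|\phi_n-\phi\|_\infty\,[\int_0^1 (l^k(t))^p\,dt]^{1/p}$ using the Lipschitz bound of the single function $f$ in its last variable, and vanishes by uniform convergence. This is why (i) requires nothing beyond $f\in\LC_p$.

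For (ii) the same $I_1$ is \emph{not} controlled by $\sigma_{\Theta D}$, whose seminorms only test \emph{constant} values of the last variable; this is where the difficulty lies. The cleanest route is to first upgrade the topology: by Ascoli--Arzel\'a the family $\{\phi_n\}\cup\{\phi\}$ shares a modulus $\theta_0$, and taking $\overline\Theta$ with $\bar\theta^I_j=\max\{\theta^I_j,\theta_0\}$ as in Theorem~\ref{thm:ContinuousSigmaDdelay}(i), the $L^1_{loc}$-boundedness of the $l_2$-bounds lets Theorem~\ref{thm:equivalence_topologies}(ii) identify $\sigma_{\Theta D}$ with $\sigma_{\Theta\overline\Theta}$ on $E$. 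Thus $f_n\to f$ in $\sigma_{\Theta\overline\Theta}$ and the conclusion follows \emph{directly} from Theorem~\ref{thm:ContinuousSigmaDdelay}(i). Alternatively one argues by hand as in that proof: with $\sigma_{\Theta\overline\Theta}$-convergence in place, $I_1$ is dominated --- after extending the functions by constants to a larger rational interval $J$ and using Lemma~\ref{lem:conv-subintHYBRID} to pass to the subinterval $I\cap[0,1]$ --- by a supremum over $x(\cdot)\in\K^{J}_k$ and $u(\cdot)\in\overline\K^{J-1}_k$, which tends to $0$ because $\phi_n(\cdot-1)\in\overline\K$; and $I_2\le\|\phi_n-\phi\|_\infty\int_0^1 l_2^k(t)\,dt\to0$ uses exactly the $l_2$-bound, which is where the extra hypothesis of (ii) enters. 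Either way (ii) reduces to the $\sigma_\Theta$ continuous-dependence result and the recursion above.

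The only genuinely delicate point, and where I would concentrate, is this weak case: making the frozen, time-dependent past $\phi_n(\cdot-1)$ admissible for a topology that a priori only tests point values in $D$. The device of trading $\sigma_{\Theta D}$ for $\sigma_{\Theta\overline\Theta}$ through the $l_2$-bound equivalence of Theorem~\ref{thm:equivalence_topologies}(ii), with $\overline\Theta$ tuned to the common modulus of the initial data, is precisely what the $l_2$-bound hypothesis of (ii) is designed to supply, mirroring the role played by the full $l$-bound in the classical $\sigma_D$ statement of Theorem~\ref{thm:ContinuousSigmaDdelay}.
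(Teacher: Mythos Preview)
Your proposal is correct and follows essentially the same route as the paper. For (i) the paper likewise instructs one to reproduce the proof of Theorem~\ref{thm:ContinuousTBdelay} verbatim, replacing the $\T_B$ estimate for $I_1$ by the $\T_{\Theta B}$-seminorm over $x(\cdot)\in\K^{[0,1]}_k$, $y(\cdot)\in C([-1,0],B_k)$; for (ii) the paper's proof is the one-line reduction ``immediate from Theorem~\ref{thm:ContinuousSigmaDdelay} and Theorem~\ref{thm:equivalence_topologies}'', which is exactly your primary route of upgrading $\sigma_{\Theta D}$ to $\sigma_{\Theta\overline\Theta}$ via the $l_2$-bound equivalence and then invoking Theorem~\ref{thm:ContinuousSigmaDdelay}(i).
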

\begin{proof}
As regards the proof of (i), proceed as for the proof of Theorem~\ref{thm:ContinuousTBdelay}, except for the fact that instead of \eqref{eq:05.10-13-31}, now one has,
\begin{equation*}
I_1\leq \sup_{\substack{x(\cdot)\in\K^{[0,1]}_k,\\[0.5ex] y(\cdot)\in C([-1,0],B_k)}}\bigg[\int_0^1  \left| f_n\big(t,x(t),y(t-1)\big)-f\big(t,x(t),y(t-1)\big)\right|^pdt\bigg]^{1/p},
\end{equation*}
 and the same reasoning applies, where instead of the topology $\T_B$, now we use the topology $\T_{\Theta B}$.\par\smallskip
On the other hand, (ii) is immediate due to Theorem~\ref{thm:ContinuousSigmaDdelay} and Theorem~\ref{thm:equivalence_topologies}.
\end{proof}
As a consequence of the previous theorems, one can obtain new results of continuity of the induced skew-product semiflow for the topologies $\T_{\Theta B}$, $\T_{\Theta D}$ and $\sigma_{\Theta D}$.
\par\smallskip
Consider $f\in\LC_p(\R^{2N},\R^N)$ and a dense and countable set $D\subset\R^N$.
With the notation of Theorem~\ref{thm:13.04-16:49}, let  $\U_{(E,\T)}$ be the subset of $\R\times\mathrm{Hull}_{(E,\T)}(f)\times \mathcal{C}$ given by
\begin{equation*}
\U_{(E,\T)}=\bigcup_{g\in\mathrm{Hull}_{(\LC_p,\T)}(f)\,,\,\phi\in \mathcal{C}}
\{(t,g,\phi)\mid t\in I_{g,\phi}\}\,,
\end{equation*}
where $(E,\T)\in\{( \LC_p,\T_{\Theta B}),\,(\LC_p,\T_{\Theta D}),\,(\LC,\sigma_{\Theta D})\}$.
\begin{thm}\label{thm:cont-skewprod-hybrid}
Consider $f\in\LC_p(\R^{2N},\R^N)$ with $L^1_{loc}$-equicontinuous $m$-bounds, the suitable set of moduli of continuity  $\Theta$   given by the $m$-bounds, as defined in {\rm Definition~\ref{def:THETAjseq}}, and the map
\begin{equation}\label{eq:skewproductsemiflow2}
\begin{split}
 \U_{(E,T)}\subset \R\times\mathrm{Hull}_{(E,\T)}(f)\times \mathcal{C}\;  &\to\; \mathrm{Hull}_{(E,\T)}(f)\times \mathcal{C}\\
 (t,g,\phi) \qquad \qquad &\mapsto \;\; \ \big(g_t, x_t(\cdot,g,\phi)\big)\,.
\end{split}
\end{equation}
\begin{itemize}[leftmargin=20pt]\setlength\itemsep{0.3em}
\item[\rm(i)]
If $(E,\T)=(\LC_p,\T_{\Theta B})$, the map~\eqref{eq:skewproductsemiflow2}
defines a local continuous skew-product semiflow on $\mathrm{Hull}_{(\LC_p,\T_{\Theta B})}(f)\times \mathcal{C}$.
\item[\rm (ii)] If  $f$ has $L^1_{loc}$-bounded $l_2$-bounds, $(E,\T)\in \{ (\LC,\T_{\Theta D}),\,(\LC,\sigma_{\Theta D})\}$ where $D$ is any contable dense subset of $\R^n$, then the map \eqref{eq:skewproductsemiflow2}
defines a local continuous skew-product semiflow on $\mathrm{Hull}_{(E,\T)}(f)\times \mathcal{C}$.
\end{itemize}
\end{thm}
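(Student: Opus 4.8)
The plan is to assemble the local continuous skew-product semiflow from three ingredients that are already available: the well-definedness of the base space $\mathrm{Hull}_{(E,\T)}(f)$ together with the propagation of the bounds that the continuity theorems require, the continuity of the base flow of time-translations, and the joint continuity of the solution map in the pair (base point, initial datum). These are then packaged into a semiflow by the standard construction, i.e. the analog of \cite[Corollary 3.4]{paper:LNO1} that was invoked in the classic-topology case.

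First I would check that $\mathrm{Hull}_{(E,\T)}(f)$ is a well-defined metric space on which the hypotheses of the continuity results hold. Since time-translation preserves the Lipschitz Carath\'eodory property, $\{f_t\mid t\in\R\}\subset E$, so the hull is meaningful. The crucial point is that, by Proposition~\ref{prop:07.07-19:44p=1}, the closure inherits the $L^1_{loc}$-equicontinuity of the $m$-bounds from $\{f_t\mid t\in\R\}$, and in case (ii) it also inherits the $L^1_{loc}$-boundedness of the $l_2$-bounds. Moreover, because the equicontinuity constants of the $m$-bounds are preserved under closure (formula~\eqref{eq:boundedMbounds-sameCONSTANT}), the single suitable set $\Theta$ built from the $m$-bounds of $f$ dominates the $m$-bounds of every $g$ in the hull; consequently the compact sets $\K_j^I$ attached to $\Theta$ simultaneously control the moduli of continuity of all solutions $x(\cdot,g,\phi)$ with $g$ in the hull, which is exactly what makes $\Theta$ the correct uniform choice over the whole base space.

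Next I would establish continuity of the two coordinate maps of~\eqref{eq:skewproductsemiflow2}. Continuity of the base flow $(t,g)\mapsto g_t$ follows from Theorem~\ref{thm:cont_time_transl_hybrid}: part (i) gives it on all of $\LC_p$ for $\T_{\Theta B}$ and $\T_{\Theta D}$, while part (ii) gives it for $\sigma_{\Theta D}$ once the hull is known to have $L^1_{loc}$-equicontinuous $m$-bounds and to be translation-invariant, both secured in the previous step. For the cocycle coordinate I would apply Theorem~\ref{thm:ContinuousTTBdelay} with the hull in the role of $E$: given $(g_n,\phi_n)\to(g,\phi)$ in $\mathrm{Hull}_{(E,\T)}(f)\times\mathcal{C}$, part (i) handles $\T_{\Theta B}$ and part (ii) handles $\sigma_{\Theta D}$ (using the $l_2$-bound hypothesis), yielding $x(\cdot,g_n,\phi_n)\to x(\cdot,g,\phi)$ uniformly on compact subintervals. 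The case $\T_{\Theta D}$ of the final statement is then reduced to the $\sigma_{\Theta D}$ case by Theorem~\ref{thm:equivalence_topologies}, under which the two point-wise hybrid topologies coincide on the hull. Since all spaces are metric, this sequential continuity is genuine continuity, and combined with the standard continuity of $t\mapsto x_t(\cdot,g,\phi)$ in $\mathcal{C}$ it gives joint continuity of $(t,g,\phi)\mapsto x_t(\cdot,g,\phi)$. The semiflow identities, namely $\Pi(0,\cdot)=\mathrm{id}$ and the cocycle relation $x_{t+s}(\cdot,g,\phi)=x_s(\cdot,g_t,x_t(\cdot,g,\phi))$, are immediate from the definition of the solution and its uniqueness (Theorem~\ref{thm:13.04-16:49}), while the qualifier \emph{local} reflects that one works on the maximal intervals $I_{g,\phi}$ defining $\U_{(E,\T)}$.

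I expect the main obstacle to be the bookkeeping of the first step rather than any new analytic difficulty: one must verify that the moduli $\Theta$ produced from $f$ alone remain adequate uniformly over the \emph{entire} hull and not merely over the single orbit of translates, and that the $l_2$-bound hypothesis of case (ii) survives the passage to the closure. Both are delivered by Proposition~\ref{prop:07.07-19:44p=1} together with the sharpened constant in~\eqref{eq:boundedMbounds-sameCONSTANT}, so once these are invoked correctly the remainder reduces to quoting Theorems~\ref{thm:cont_time_transl_hybrid} and~\ref{thm:ContinuousTTBdelay}, the equivalence Theorem~\ref{thm:equivalence_topologies}, and the general skew-product construction.
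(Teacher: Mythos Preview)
Your proposal is correct and follows essentially the same route as the paper's proof, which simply cites Theorem~\ref{thm:cont_time_transl_hybrid} for the base flow and Theorem~\ref{thm:ContinuousTTBdelay} for the cocycle; your additional bookkeeping via Proposition~\ref{prop:07.07-19:44p=1} makes explicit what the paper leaves implicit. One small slip: Theorem~\ref{thm:equivalence_topologies} does \emph{not} identify $\T_{\Theta D}$ with $\sigma_{\Theta D}$ (it only equates the strong hybrid topologies among themselves and the weak ones among themselves), so for the $\T_{\Theta D}$ case you should instead use the elementary order relation $\sigma_{\Theta D}\le\T_{\Theta D}$ from~\eqref{eq:DelayTopologiesChain}, exactly as the paper does when it says that the $\sigma_{\Theta D}$ result ``implies the result for the case $(\LC,\T_{\Theta D})$''.
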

\begin{proof}
For any of the cited topologies, the continuity on the base flow is a consequence of Theorem~\ref{thm:cont_time_transl_hybrid}. Additionally, Theorem~\ref{thm:ContinuousTTBdelay} allows to deduce the continuity of the solutions for the cases $(\LC_p,\T_{\Theta B})$ and $ (\LC,\sigma_{\Theta D})$ and, in particular, the last one also implies the result for the case $(\LC,\T_{\Theta D})$.
\end{proof}
\section{Continuous dependence of the solutions in the Sobolev spaces $\mathcal{C}^{1,p}$}\label{sec:Sobolev}
One might inquire if under no assumptions on the $l_2$-bounds whatsoever, it is still possible to obtain any result of global continuity for the skew-product semiflows generated by Carath\'eodory differential equations using a topology weaker than $\T_{\Theta B}$. The topologies of type $\T_{\Theta \smash{\widehat\Theta}}$ come in help but at the price of needing more regularity on the initial data and a slightly technical choice of the set $\widehat \Theta$.
\par\smallskip
In particular, as regards the additional regularity on the initial data, it will be sufficient to use Sobolev spaces of continuous functions whose first derivative exists almost everywhere and it is locally integrable. As a matter of fact, such choice is the natural one in the context of Carath\'eodory delay differential equations. Indeed, due to the existence of the $m$-bounds, the solutions of Carath\'eodory delay differential equation intrinsically satisfy such condition as long as $t>0$.
\begin{defn}
Consider $1\le p <\infty$, a compact interval $I\subset \R$, and let $\mathcal{C}^{1,p}(I)$ denote  the vector space defined by
\begin{equation*}
\mathcal{C}^{1,p}(I):=\{x\in\mathcal{C}(I,\R^N)\mid x \text{ differentiable a.e. and }\dot x\in L^p(I)\}.
\end{equation*}
Such space will be endowed with the norm
\[
\|\cdot\|_{\mathcal{C}^{1,p}(I)}\colon\mathcal{C}^{1,p}(I)\to\R^+,\quad x\mapsto\|x\|_{\mathcal{C}^{1,p}(I)}:=\|x\|_{L^\infty(I)}+\|\dot x\|_{L^p(I)}.
\]
In the following, if $I=[-1,0]$ we will simply write $\mathcal{C}^{1,p}$.
\end{defn}
\begin{prop}
$\mathcal{C}^{1,p}(I)$ is a Banach space with respect to the norm $\|\cdot\|_{\mathcal{C}^{1,p}(I)}$.
\end{prop}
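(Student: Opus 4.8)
The plan is to realize $(\mathcal{C}^{1,p}(I),\|\cdot\|_{\mathcal{C}^{1,p}(I)})$ as a closed subspace of a complete space via the natural isometric embedding, and deduce completeness from closedness. First I would record that $\|\cdot\|_{\mathcal{C}^{1,p}(I)}$ is genuinely a norm: positive homogeneity and the triangle inequality are inherited termwise from $\|\cdot\|_{L^\infty(I)}$ and $\|\cdot\|_{L^p(I)}$ together with linearity of differentiation, while definiteness is immediate, since $\|x\|_{\mathcal{C}^{1,p}(I)}=0$ forces $\|x\|_{L^\infty(I)}=0$ and hence $x=0$. With $I=[a,b]$, the assignment $\iota\colon x\mapsto(x,\dot x)$ is then a linear isometry of $\mathcal{C}^{1,p}(I)$ onto its image inside the Banach space $C(I,\R^N)\times L^p(I,\R^N)$, the latter endowed with the sum of the sup-norm and the $L^p$-norm; thus it suffices to prove that $\iota\big(\mathcal{C}^{1,p}(I)\big)$ is closed.

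Starting from a Cauchy sequence $(x_n)_\nin$ in $\mathcal{C}^{1,p}(I)$, the termwise estimates $\|x_n-x_m\|_{L^\infty(I)}\le\|x_n-x_m\|_{\mathcal{C}^{1,p}(I)}$ and $\|\dot x_n-\dot x_m\|_{L^p(I)}\le\|x_n-x_m\|_{\mathcal{C}^{1,p}(I)}$ show that $(x_n)_\nin$ is Cauchy in $C(I,\R^N)$ and that $(\dot x_n)_\nin$ is Cauchy in $L^p(I,\R^N)$. By completeness of these two classical spaces, there exist $x\in C(I,\R^N)$ with $x_n\to x$ uniformly and $g\in L^p(I,\R^N)$ with $\dot x_n\to g$ in $L^p(I,\R^N)$. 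The whole content of the proposition is now to show that the candidate limit $(x,g)$ lies in the image of $\iota$, that is, that $x$ is differentiable almost everywhere with $\dot x=g$ a.e.; once this is done, $x\in\mathcal{C}^{1,p}(I)$ and $\|x_n-x\|_{\mathcal{C}^{1,p}(I)}=\|x_n-x\|_{L^\infty(I)}+\|\dot x_n-g\|_{L^p(I)}\to0$ as $\nti$, giving completeness.

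For this crucial identification I would pass through the integral representation. Each $x_n$, being absolutely continuous on the bounded interval $I$, satisfies $x_n(t)=x_n(a)+\int_a^t\dot x_n(s)\,ds$ for every $t\in I$. Since $I$ is bounded, $L^p(I)\subset L^1(I)$ continuously, so $\dot x_n\to g$ also in $L^1(I)$; combining this with the uniform (hence pointwise) convergence $x_n\to x$ and letting $\nti$ in the displayed identity yields
\[
x(t)=x(a)+\int_a^t g(s)\,ds\qquad\text{for all }t\in I.
\]
Thus $x$ is the primitive of an $L^1$ function, hence absolutely continuous, and by the Lebesgue differentiation theorem it is differentiable almost everywhere with $\dot x=g$ a.e. This places $x$ in $\mathcal{C}^{1,p}(I)$ and closes the argument.

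The main obstacle is exactly the equality $\dot x=g$: a merely continuous, almost-everywhere differentiable function need not obey the fundamental theorem of calculus (the Cantor function is the standard warning), so the reasoning genuinely relies on the elements of $\mathcal{C}^{1,p}(I)$ being absolutely continuous, equivalently on $\dot x$ being the weak derivative. A transparent way to secure this is to characterize $g$ distributionally: passing to the limit in $\int_I x_n\,\varphi'\,dt=-\int_I\dot x_n\,\varphi\,dt$ for every $\varphi\in C^\infty_c\big((a,b)\big)$, which is legitimate by the $L^1$-convergence just used, identifies $g$ as the distributional derivative of $x$; the one-dimensional Sobolev theory then furnishes the absolutely continuous representative with classical a.e. derivative $g$, so that $\mathcal{C}^{1,p}(I)$ is, as expected, the Sobolev space $W^{1,p}(I)$ under an equivalent norm and is therefore complete.
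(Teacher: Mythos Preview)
The paper states this proposition without proof, so there is no argument to compare against. Your proof is correct and is the standard route: embed isometrically into $C(I,\R^N)\times L^p(I,\R^N)$, pass to the integral representation $x_n(t)=x_n(a)+\int_a^t\dot x_n$, take limits, and invoke Lebesgue differentiation to identify $\dot x=g$. You also correctly flag the one genuine subtlety in the paper's literal definition---``continuous, differentiable a.e.\ with $\dot x\in L^p$'' does not by itself force absolute continuity, as the Cantor function shows---and resolve it by reading $\mathcal{C}^{1,p}(I)$ as the Sobolev space $W^{1,p}(I)$; this is exactly how the authors use the space (they call it a Sobolev space in the introduction and apply it to Carath\'eodory solutions, which are absolutely continuous by construction), so your interpretation is the intended one.
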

Concerning the choice of $\widehat \Theta$, the idea is to associate a set of moduli of continuity to the space $\mathcal{C}^{1,p}$. For every $j\in\N$ consider
\[
\tau_j(h)=\begin{cases}
\displaystyle\sup_{|t-s|<h\,,\; \|\phi\|_{\mathcal{C}^{1,p}}\le j}|\phi(t)-\phi(s)|, &\text{if } 0\le h\le 1\\[2.4ex]
\qquad \quad\tau_j(1) &\text{if } h>1\\
\end{cases}
\]
and notice that for $p>1$, $\tau_j(\cdot)$ is a modulus of continuity shared by all the functions in the ball of radius $j$ in $\mathcal{C}^{1,p}$. Indeed, considered  $\phi\in\mathcal{C}^{1,p}$ with $\|\phi\|_{\mathcal{C}^{1,p}}\le j$, thanks to H\"older inequality we can write
\begin{equation}\label{eq:27.09-19:17}
 |\phi(t+h)-\phi(t)|\le \int_t^{t+h}|\phi'(s)|\,ds\le h^{1/q}\,\|\phi'(\cdot)\|_{L^p([-1,0])}\le j\,h^{1/q}
\end{equation}
where $1/q+1/p=1$, and notice that the last inequality is uniform in $\phi\in\mathcal{C}^{1,p}$ with $\|\phi\|_{\mathcal{C}^{1,p}}\le j$.  Then, for $p>1$ one can consider the suitable set of moduli of continuity defined by
\begin{equation}\label{eq:C1pmoduli}
\widehat \Theta=\left\{\widehat\theta^I_j(\cdot)=\max\{\theta^I_j(\cdot) ,\,\tau_j(\cdot)\}\,\Big| \, j\in\N,\, I=[q_1,q_2], q_1,q_2\in\Q\right\}.
\end{equation}
\begin{thm}\label{thm:C1p>1}
 Consider $p>1$, $E\subset\LC_p(\R^{2N},\R^N)$,  and any dense and countable subset $D$ of $\R^N$. Then, with the notation of {\rm Theorem~\ref{thm:13.04-16:49}}, the following statements~hold.
\begin{itemize}[leftmargin=20pt]\setlength\itemsep{0.3em}
\item[\rm (i)] If $E$ has $L^1_{loc}$-equicontinuous $m$-bounds, let $\Theta$ be the suitable set of moduli of continuity given by the $m$-bounds,  $\widehat\Theta$  the suitable set of moduli of continuity given in \eqref{eq:C1pmoduli}, and  $\overline E$  the closure of $E$  in $\LC_p(\R^{2N},\R^N)$ with respect to the topology $\T_{\Theta \smash{\widehat\Theta}}$. Then, for any sequence $(f_n)_\nin$ in $\overline E$ converging to $f$ in $(\overline E,\T_{\Theta \smash{\widehat\Theta}})$, and  for any sequence $\big(\phi_{n}(\cdot)\big)_\nin$ in $\mathcal{C}^{1,p}$ converging uniformly to $\phi\in \mathcal{C}^{1,p}$, one has that
\begin{equation}\label{convergencec1p}
\| x(\cdot,f_n,\phi_{n})  - x(\cdot,f,\phi)\|_{\mathcal{C}^{1,p}([-1,T])}\xrightarrow{\nti}0
\end{equation}
for any $[-1,T]\subset I_{f,\phi}$.
\item[\rm (ii)]  If $E$ has $L^1_{loc}$-equicontinuous $m$-bounds and $L^p_{loc}$-bounded $l_2$-bounds,  then \eqref{convergencec1p} holds when the closure of $E$, and hence the convergence of the sequence $(f_n)_\nin$  to $f$, is taken with respect to $\T_{\Theta D}$ for the same modulus of continuity $\Theta$ of {\rm(i)}.
\item[\rm (iii)]  If $E$ has $L^p_{loc}$-bounded $l$-bounds, then~\eqref{convergencec1p} holds when the closure of $E$, and hence  the convergence of the sequence $(f_n)_\nin$  to $f$, is taken  with respect to~$\T_D$.
\end{itemize}
\end{thm}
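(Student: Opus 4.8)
The plan is to reduce everything to statement (i) and to split the $\mathcal{C}^{1,p}$-convergence into the uniform convergence already provided by Section~\ref{sec:Continuity-classic-topologies} and a new $L^p$-estimate for the derivatives. Write $x_n(\cdot)=x(\cdot,f_n,\phi_n)$ and $x(\cdot)=x(\cdot,f,\phi)$. Since $\|\cdot\|_{\mathcal{C}^{1,p}([-1,T])}$ is the sum of the sup norm and of the $L^p$ norm of the derivative, proving \eqref{convergencec1p} amounts to showing (a) $x_n\to x$ uniformly on $[-1,T]$ and (b) $\dot x_n\to\dot x$ in $L^p([0,T])$, the contribution of the initial interval $[-1,0]$ being exactly $\|\phi_n-\phi\|_{\mathcal{C}^{1,p}}$. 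For (a) in case (i), I would exploit that the common $\mathcal{C}^{1,p}$-bound $j$ of the functions $\phi_n$ forces them all to share the modulus of continuity $\tau_j$ of \eqref{eq:27.09-19:17} (this is where $p>1$ is indispensable, since for $p=1$ the function $\tau_j$ does not vanish at $0$). Hence $\widehat\theta^I_j=\max\{\theta^I_j,\tau_j\}$ dominates the modulus of the initial data, and, because $\sigma_{\Theta\smash{\widehat\Theta}}\le\T_{\Theta\smash{\widehat\Theta}}$ in \eqref{eq:DelayTopologiesChain}, the $\T_{\Theta\smash{\widehat\Theta}}$-convergence of $(f_n)$ descends to convergence in the weak hybrid topology covered by Theorem~\ref{thm:ContinuousSigmaDdelay}(i) (with $\tau_j$ playing the role of $\theta_0$). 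That theorem then yields the uniform convergence of the solutions on every $[-1,T]\subset I_{f,\phi}$.

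For (b), which is the genuinely new ingredient, I would write for a.e.\ $t\in[0,T]$
\[
\dot x_n(t)-\dot x(t)=A_n(t)+B_n(t),
\]
with $A_n(t)=f_n\big(t,x_n(t),x_n(t-1)\big)-f\big(t,x_n(t),x_n(t-1)\big)$ and $B_n(t)=f\big(t,x_n(t),x_n(t-1)\big)-f\big(t,x(t),x(t-1)\big)$. The term $B_n$ is controlled by the optimal $l$-bound $l^k(\cdot)\in L^p_{loc}$ of $f$ on a ball $B_k$ containing all the relevant values, since $|B_n(t)|\le l^k(t)\big(\|x_n-x\|_\infty+\|x_n(\cdot-1)-x(\cdot-1)\|_\infty\big)$ gives $\|B_n\|_{L^p([0,T])}\to0$ by step (a). For $A_n$ the point is that, by Proposition~\ref{prop:07.07-19:44p=1}, the closure $\overline E$ keeps the same $L^1_{loc}$-equicontinuous $m$-bounds and the same constants as $E$, so that $|x_n(t)-x_n(s)|\le\int_s^t m^k_{f_n}(u)\,du\le\theta_k(|t-s|)$ and therefore $x_n(\cdot)\in\K^I_k$; moreover, on each unit subinterval the history $x_n(\cdot-1)$ belongs to $\widehat\K_k$ (its modulus being $\tau_k$ on $[-1,0]$ and $\theta_k$ afterwards, both $\le\widehat\theta_k$). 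Splitting $[0,T]$ into unit subintervals and using that every $x_n$ lies in the fixed compact sets over which the $\T_{\Theta\smash{\widehat\Theta}}$-seminorms test convergence — invoking Lemma~\ref{lem:alternative-seminorms} to rewrite the translated histories through functions of $\widehat\K^I_k$ — I would bound $\|A_n\|_{L^p([0,T])}^p$ by a finite sum of $\T_{\Theta\smash{\widehat\Theta}}$-seminorms of $f_n-f$, each tending to $0$. Together with the $B_n$ estimate this gives (b), and hence \eqref{convergencec1p} in case (i).

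Statements (ii) and (iii) I would then obtain from the equivalence theorems of Section~\ref{sectopo}. Under the hypothesis of (ii), the $L^p_{loc}$-bounded $l_2$-bounds and Theorem~\ref{thm:equivalence_topologies}(i) give $\T_{\Theta D}=\T_{\Theta\smash{\widehat\Theta}}$ on $E$ together with the equality of the respective closures, so convergence in $\T_{\Theta D}$ is convergence in $\T_{\Theta\smash{\widehat\Theta}}$ and (i) applies unchanged. Under the hypothesis of (iii), the $L^p_{loc}$-bounded $l$-bounds are in particular $L^1_{loc}$-bounded (H\"older, as $p>1$), so Theorem~\ref{thm:equivalence_topologies_nonhybrid}(i) collapses all the strong topologies of \eqref{eq:DelayTopologiesChain}, and in particular $\T_D=\T_B$ on $E$; the uniform convergence then comes from Theorem~\ref{thm:ContinuousTDdelay}, while in the derivative estimate the term $A_n$ is now dominated directly by the $\T_B$-seminorm, whose supremum runs over all bounded continuous functions and hence needs no reference to the moduli $\Theta,\widehat\Theta$, the term $B_n$ being treated as before.

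The main obstacle is precisely the term $A_n$: because its argument $x_n$ depends on $n$, the integral $\int_0^T|A_n|^p\,dt$ is not a priori a seminorm of $f_n-f$, and the whole scheme rests on trapping all the solutions simultaneously inside one fixed family of compact sets of continuous functions. This is made possible exactly by the design of the topology, whose moduli $\Theta$ are those generated by the $m$-bounds and thus coincide with the moduli of continuity of the solutions, while $\widehat\Theta$ absorbs the $\mathcal{C}^{1,p}$-modulus $\tau_j$ of the history along the initial interval. The one delicate spot, the absence of a global modulus for the history across the junction $t=0$, is avoided by performing the estimate interval by interval, exactly as in the proof of Theorem~\ref{thm:ContinuousTBdelay}.
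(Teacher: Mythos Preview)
Your proposal is correct and follows essentially the same route as the paper: reduce the $\mathcal{C}^{1,p}$-convergence to uniform convergence via Theorem~\ref{thm:ContinuousSigmaDdelay}(i) (using that $\tau_j$ is the common modulus of the initial data and $\sigma_{\Theta\smash{\widehat\Theta}}\le\T_{\Theta\smash{\widehat\Theta}}$), then split $\dot x_n-\dot x$ into the two terms you call $A_n$ and $B_n$, controlling $B_n$ with the $l$-bound of $f$ and $A_n$ by trapping $x_n$ and its history in the compact sets $\K_j$, $\widehat\K_j$ over which the $\T_{\Theta\smash{\widehat\Theta}}$-seminorms test convergence; parts (ii) and (iii) are then obtained from Theorems~\ref{thm:equivalence_topologies}(i) and~\ref{thm:equivalence_topologies_nonhybrid}(i) exactly as you indicate. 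The only cosmetic difference is that the paper handles $A_n$ with a single interval $J\supset[0,T]$ rather than your unit-by-unit splitting, which is actually a little more careful about the junction at $t=0$ than the paper's own argument.
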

\begin{proof} Firstly notice that, with the notation of {\rm Theorem~\ref{thm:13.04-16:49}}, one has that the solution $x(\cdot, f , \phi)$ of the delay differential problem with vector field $f\in\LC_p(\R^{2N},\R^N)$ and initial data $\phi\in\mathcal{C}^{1,p}$, belongs to $\mathcal{C}^{1,p}(I)$ for any compact interval $I\subset I_{f,\phi}$. Indeed, for any $t\ge 0$ it satisfies $\dot x(t,f,\phi)=f\big(t,x(t),x(t-1)\big)\in L^p_{loc}$. Let us consider any $0\le T< b_{f,\phi}$.
\par\smallskip
Concerning the proof of (i), by construction of $\widehat\Theta$,  one has that for some  $k\in\N$ $\{\phi_n(\cdot)\mid\nin\}\cup\{\phi(\cdot)\}\subset\widehat\K^{[-1,0]}_k$ , and notice that, considered the topology $\sigma_{\Theta \smash{\overline\Theta}}$ constructed as in the statement of Theorem~\ref{thm:ContinuousSigmaDdelay} for $\theta_0(\cdot)=\tau_k(\cdot)$, it is immediate to prove that  $\sigma_{\Theta \smash{\overline\Theta}}\le \T_{\Theta \smash{\widehat\Theta}}$. As a consequence, by Theorem~\ref{thm:ContinuousSigmaDdelay}, one immediately has that the sequence of solutions $(x(\cdot,f_n,\phi_n))_\nin$ converges uniformly  to $x(\cdot,f,\phi)$ in $[-1,T]\subset[-1,b_{f,\phi})$.
\par\smallskip
In order to prove~\eqref{convergencec1p}, we check the continuous variation of the derivatives in $L^p([-1,T])$. To simplify the notation, denote by $x_n(\cdot)=x (\cdot, f_n,\phi_n)$ and by $x(\cdot)=x (\cdot, f,\phi)$. Moreover, consider a compact interval $J\subset [0, b_{f,\phi})$ with rational extrema and such that $[0,T]\subset J$, and $j\geq k$ such that $\{x_n(t)\mid\nin\}\cup\{x(t)\}\subset B_j$ for each $t\in J$. Then one has
\begin{align*}
\big\|\dot x_n (\cdot)-& \dot x(\cdot)\big\|_{L^p(J)}=\big\|f_n\big(\cdot,x_n(\cdot),x_n(\cdot-1)\big)-f\big(\cdot,x(\cdot),x(\cdot-1)\big)\big\|_{L^p(J)}\\[0.5ex]
&\le \big\|f_n\big(\cdot,x_n(\cdot),x_n(\cdot-1)\big)-f\big(\cdot,x_n(\cdot),x_n(\cdot-1)\big)\big\|_{L^p(J)}\\[0.5ex]
&\qquad\qquad+ \big\|f\big(\cdot,x_n(\cdot),x_n(\cdot-1)\big)-f\big(\cdot,x(\cdot),x(\cdot-1)\big)\big\|_{L^p(J)}\\[0.5ex]
&\le \sup_{\xi(\cdot)\in\K^{J}_j,\eta(\cdot)\in\widehat\K^{J-1}_j} \left\|f_n\big(\cdot,\xi(\cdot),\eta(\cdot-1)\big)-f\big(\cdot,\xi(\cdot),\eta(\cdot-1)\big)\right\|_{L^p(J)}\\
&\qquad\qquad+2\, \big\|x_n (\cdot)- x(\cdot)\big\|_{L^\infty(J)}\,\big\|l^{2j}_f(\cdot)\big\|_{L^p(J)},
\end{align*}
and since $f_n\xrightarrow{\T_{\Theta \smash{\widehat\Theta}}}f$, $x_n(\cdot)\xrightarrow{L^\infty(J)}x(\cdot)$, and $l^{2j}_f(\cdot)\in L^p_{loc}$, then the right-hand side of the previous chain of inequalities goes to zero as $\nti$, which eventually implies that  $x_n(\cdot)\xrightarrow{\nti}x(\cdot)$ in $\mathcal{C}^{1,p}([-1,T])$ and concludes the proof of (i).\par\smallskip
Statement (ii) follows from (i) and Theorem~\eqref{thm:equivalence_topologies}(i). In order to prove (iii), from Theorem~\ref{thm:ContinuousTDdelay} we deduce that the sequence of solutions $(x(\cdot,f_n,\phi_n))_\nin$ converges uniformly  to $x(\cdot,f,\phi)$ in $[-1,T]\subset[-1,b_{f,\phi})$. Moreover, from Theorem~\ref{thm:equivalence_topologies_nonhybrid}(i) $(f_n)_{\nin}$ converges to $f$ in $\T_{B}$. Therefore, if in the above chain of inequalities  we change the first term of the last inequality by
\begin{align*}
\sup_{(\xi(\cdot),\eta(\cdot))\in C(J,B_{2j})} \left\|f_n\big(\cdot,\xi(\cdot),\eta(\cdot)\big)-f\big(\cdot,\xi(\cdot),
\eta(\cdot)\big)\right\|_{L^p(J)}\,,
\end{align*}
we deduce that $\big\|\dot x_n(\cdot)-\dot x (\cdot)\big\|_{L^p(J)}\xrightarrow{\nti} 0$, which finishes the proof of (iii).
\end{proof}
In fact, using the same notation and reasoning of Theorem~\ref{thm:C1p>1}, and recalling that $\sigma_{\smash{\Theta\overline\Theta}}\le \T_{\smash{\Theta\overline\Theta}} $ for any pair of suitable sets of moduli of continuity $ \Theta$ and $\overline\Theta$, one can improve the information on the solutions obtained in Theorem~\ref{thm:ContinuousSigmaDdelay} once the interval $[-1,0]$ is disregarded. More precisely, also from the continuous variation of the initial data in $\mathcal{C}([-1,0])$ we obtain  the continuous variation of the solutions in $\mathcal{C}^{1,p}([0,T])$ for any $[0,T]\subset I_{f,\phi}$, as stated in the following proposition whose proof is omitted.
\begin{prop} \label{prop:C}
Consider $p>1$ and a subset $E\subset\LC_p(\R^{2N},\R^N)$. With the notation of {\rm Theorem~\ref{thm:13.04-16:49}}, the following statements hold.
\begin{itemize}[leftmargin=20pt]\setlength\itemsep{0.3em}
\item[\rm (i)]   If $E$ has $L^1_{loc}$-equicontinuous $m$-bounds,  let $\Theta$ be the suitable set of moduli of continuity given by the $m$-bounds. Assume that $\big(\phi_{n}(\cdot)\big)_\nin$ converges  uniformly to $\phi$ in $\mathcal{C}$ and let $\overline\Theta$ be the suitable set of moduli of continuity constructed in {\rm Theorem~\ref{thm:ContinuousSigmaDdelay}(i)}. Therefore,
if $(f_n)_\nin$ is a sequence in $E$ converging to $f$ in $(\LC_p(\R^{2N},\R^N),\T_{\smash{\Theta\overline\Theta}})$ one has that
\begin{equation}\label{convergence[0T]}
\| x(\cdot,f_n,\phi_{n})  - x(\cdot,f,\phi)\|_{\mathcal{C}^{1,p}([0,T])}\xrightarrow{\nti}0
\end{equation}
for any $[0,T]\subset I_{f,\phi}$.
\item[\rm (ii)]   If $E$ has $L^1_{loc}$-equicontinuous $m$-bounds and $L^p_{loc}$-bounded $l_2$-bounds, then \eqref{convergence[0T]} holds when the convergence of the sequence $(f_n)_\nin$  to $f$, is taken with respect to  $\T_{\Theta D}$ for the same modulus of continuity $\Theta$ of {\rm(i)};
\item[\rm (iii)] if $E$ has $L^p_{loc}$-bounded $l$-bounds, then~\eqref{convergence[0T]} also holds when the convergence of the sequence $(f_n)_\nin$  to $f$ is taken  with respect to ~$\T_D$.
\end{itemize}
\end{prop}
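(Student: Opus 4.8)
The plan is to exploit the structural fact that, although the initial data now lie only in $\mathcal{C}=C([-1,0],\R^N)$, the solution satisfies $\dot x(t,f,\phi)=f\big(t,x(t),x(t-1)\big)\in L^p_{loc}$ for every $t\ge 0$ by virtue of the $m$-bounds; hence on any $[0,T]\subset I_{f,\phi}$ the solution automatically belongs to $\mathcal{C}^{1,p}$, and the only real work is to establish convergence of the derivatives in $L^p([0,T])$. I would prove (i) directly, following the scheme of Theorem~\ref{thm:C1p>1}(i) but restricting every $\mathcal{C}^{1,p}$-estimate to $[0,T]$ rather than to $[-1,T]$, and then reduce (ii) and (iii) to (i) by means of the equivalence results Theorem~\ref{thm:equivalence_topologies} and Theorem~\ref{thm:equivalence_topologies_nonhybrid}.

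For (i), I would first recover the uniform convergence of the solutions. Since $\overline\Theta$ is precisely the suitable set of moduli of continuity built in Theorem~\ref{thm:ContinuousSigmaDdelay}(i) from $\Theta$ and the common modulus $\theta_0$ of $\{\phi_n\}\cup\{\phi\}$, and since $\sigma_{\Theta\overline\Theta}\le\T_{\Theta\overline\Theta}$ by the chain in Remark~\ref{rmk:DelayTopologiesChain}, convergence of $(f_n)_\nin$ to $f$ in $\T_{\Theta\overline\Theta}$ entails convergence in $\sigma_{\Theta\overline\Theta}$; Theorem~\ref{thm:ContinuousSigmaDdelay}(i) then yields $x(\cdot,f_n,\phi_n)\to x(\cdot,f,\phi)$ uniformly on every $[-1,T']\subset I_{f,\phi}$. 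Next, fixing a rational interval $J$ with $[0,T]\subset J\subset[0,b_{f,\phi})$ and $j$ large enough that all solution values on $J$ and on $J-1$ lie in $B_j$, I would bound $\|\dot x_n-\dot x\|_{L^p(J)}$ exactly as in Theorem~\ref{thm:C1p>1}(i): splitting $f_n(\cdot,x_n,x_n(\cdot-1))-f(\cdot,x,x(\cdot-1))$ into a term in which only the vector field changes and a term dominated by $\|l^{2j}_f\|_{L^p(J)}\,\|x_n-x\|_{L^\infty(J\cup(J-1))}$, with $l^{2j}_f\in L^p_{loc}$ the optimal $l$-bound of the limit. The second term vanishes by the uniform convergence just obtained, while the first is dominated by a $\T_{\Theta\overline\Theta}$-seminorm and therefore tends to zero.

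The delicate point, which I would treat with care, is that the first term asks the history window $x_n(\cdot-1)$ on $J-1$ to lie in a compact set of functions admitting $\overline\theta^{\,J-1}_j$ as modulus of continuity: on $(J-1)\cap[-1,0]$ this window is the datum $\phi_n$ (modulus $\theta_0\le\overline\theta^{\,J-1}_j$) and on $(J-1)\cap[0,\infty)$ it is a piece of the solution (modulus $\theta^{J-1}_j\le\overline\theta^{\,J-1}_j$ by Definition~\ref{def:THETAjseq}), yet a single window straddling $t=0$ need not inherit the joint modulus cleanly. I would dispose of this exactly as the recursion of Theorem~\ref{thm:ContinuousTBdelay} does, carrying out the derivative estimate over the successive unit intervals $[0,1],[1,2],\dots$, so that each history window is \emph{entirely} initial datum or \emph{entirely} solution, and summing the finitely many $L^p$-contributions; this gives~\eqref{convergence[0T]} and proves (i). Then (ii) follows because under $L^p_{loc}$-bounded $l_2$-bounds Theorem~\ref{thm:equivalence_topologies}(i) makes $\T_{\Theta D}$ coincide with $\T_{\Theta\overline\Theta}$ on $E$, and (iii) follows because $L^p_{loc}$-bounded $l$-bounds make $\T_D$ coincide with $\T_B$ on $E$ by Theorem~\ref{thm:equivalence_topologies_nonhybrid}(i); in the latter case I would rerun the derivative estimate taking the first term as a $\T_B$-seminorm over all of $C(J,B_{2j})$, the uniform convergence of solutions being supplied by Theorem~\ref{thm:ContinuousTDdelay}. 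The main obstacle is thus the control of the history window across the initial instant in (i) (inherited by (ii)); part (iii) sidesteps it altogether, since the $\T_B$-seminorm ranges over unrestricted continuous functions and no modulus matching is required.
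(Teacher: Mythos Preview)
Your proposal is correct and follows essentially the same route the paper indicates: the paper omits the proof and states just before the proposition that it is obtained ``using the same notation and reasoning of Theorem~\ref{thm:C1p>1}, and recalling that $\sigma_{\Theta\overline\Theta}\le\T_{\Theta\overline\Theta}$'', i.e.\ uniform convergence via Theorem~\ref{thm:ContinuousSigmaDdelay}(i), then the $L^p$-derivative estimate of Theorem~\ref{thm:C1p>1}, with (ii) and (iii) reduced via Theorems~\ref{thm:equivalence_topologies}(i) and~\ref{thm:equivalence_topologies_nonhybrid}(i). Your added care about the history window straddling $t=0$---resolved by iterating over unit intervals as in Theorem~\ref{thm:ContinuousTBdelay}---is a point the paper does not make explicit (neither here nor in Theorem~\ref{thm:C1p>1}); it is a valid and welcome refinement rather than a different approach.
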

As a consequence of \cite[Corollary 3.4]{paper:LNO1},  Theorem~\ref{thm:cont_time_transl_hybrid}(i) and Theorem~\ref{thm:C1p>1}  one immediately obtains the following theorem of continuity of the skew-product semiflow defined on the hull of a function $f\in\LC_p(\R^{2N},\R^N)$
with phase space $\mathcal{C}^{1,p}$. The proof is omitted. \par\smallskip
We denote by
$\U_{(\LC_p,\T)}$ the subset of $\R\times\mathrm{Hull}_{(\LC_p,\T)}(f)\times \mathcal{C}^{1,p}$ given by
\begin{equation*}
\U_{(\LC_p,\T)}=\bigcup_{g\in\mathrm{Hull}_{(\LC_p,\T)}(f)\,,\,\phi\in \mathcal{C}^{1,p}}
\{(t,g,\phi)\mid t\in I_{g,\phi}\}\,,
\end{equation*}
 where $\T$ is any of the topologies defined on $\LC_p(\R^{2N},\R^N)$.
\begin{thm}\label{thm:skew_p>1}
Consider $p>1$, $f\in\LC_p(\R^{2N},\R^N)$ and the map
\begin{equation}\label{eq:skewproductsemiflow3}
\begin{split}
 \U_{(\LC_p,T)}\subset \R\times\mathrm{Hull}_{(\LC_p,\T)}(f)\times  \mathcal{C}^{1,p}\;  &\to\; \mathrm{Hull}_{(\LC_p,\T)}(f)\times  \mathcal{C}^{1,p}\\
 (t,g,\phi) \qquad \qquad &\mapsto \;\; \ \big(g_t, x_t(\cdot,g,\phi)\big)\,.
\end{split}
\end{equation}
\begin{itemize}[leftmargin=20pt]\setlength\itemsep{0.3em}
  \item[\rm (i)] If f has $L^1_{loc}$-equicontinuous $m$-bounds, let  $\Theta$ and $\widehat\Theta$ be the suitable set of moduli of continuity given by the $m$-bounds and~\eqref{eq:C1pmoduli}, respectively.
Then the map~\eqref{eq:skewproductsemiflow3} is a continuous skew-product semiflow for $\T=\T_{\Theta \smash{\widehat\Theta}}$.
\item[\rm (ii)] If f has $L^1_{loc}$-equicontinuous $m$-bounds and  $L^p_{loc}$-bounded $l_2$-bounds, then the map~\eqref{eq:skewproductsemiflow3} defines  a continuous skew-product semiflow for $\T=\T_{\Theta D}$, where $\Theta$ and is the suitable set of moduli of continuity given by the $m$-bounds.
\item[\rm (iii)] If $f$ has  $L^p_{loc}$-bounded $l$-bounds, the map ~\eqref{eq:skewproductsemiflow3} defines  a continuous skew-product semiflow for $\T=\T_D$.
\end{itemize}
\end{thm}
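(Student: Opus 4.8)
The plan is to recognize \eqref{eq:skewproductsemiflow3} as the coupling of a base flow with a cocycle and to check the three hypotheses of the abstract assembly result \cite[Corollary 3.4]{paper:LNO1}: the algebraic semiflow identities, continuity of the base flow $(t,g)\mapsto g_t$ on the hull, and continuity of the cocycle $(t,g,\phi)\mapsto x_t(\cdot,g,\phi)$ into $\mathcal{C}^{1,p}$. Write $\Omega=\mathrm{Hull}_{(\LC_p,\T)}(f)$. The map is well defined on $\U_{(\LC_p,\T)}$ because, under the stated bound hypotheses, Proposition~\ref{prop:07.07-19:44p=1} propagates the relevant $m$-bounds (and $l_2$- or $l$-bounds) from $\{f_t\mid t\in\R\}$ to every $g\in\Omega$, so that Theorem~\ref{thm:13.04-16:49} furnishes a unique maximal solution for each $(g,\phi)\in\Omega\times\mathcal{C}^{1,p}$; moreover the continuous dependence recorded below makes $\U_{(\LC_p,\T)}$ open in $\R\times\Omega\times\mathcal{C}^{1,p}$, which is what the local character of the semiflow refers to.

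I would dispatch the algebraic part first: $\Pi(0,g,\phi)=(g,\phi)$ is immediate, and the semiflow identity $\Pi(t+s,g,\phi)=\Pi\big(t,\Pi(s,g,\phi)\big)$ is the usual cocycle argument -- translating $g$ by $s$ turns the segment $x_s(\cdot,g,\phi)$ into an admissible initial datum for the equation driven by $g_s$, and uniqueness in Theorem~\ref{thm:13.04-16:49} forces the two solutions to agree. For the base flow, cases (i) and (ii) are exactly Theorem~\ref{thm:cont_time_transl_hybrid}(i), since $\T_{\Theta\smash{\widehat\Theta}}$ and $\T_{\Theta D}$ are among the topologies it treats. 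For case (iii) I would fix any suitable set of moduli of continuity $\Theta$ and use the $L^p_{loc}$-boundedness of the $l$-bounds together with Theorem~\ref{thm:equivalence_topologies_nonhybrid}(i) to identify $\T_D$ with $\T_{\Theta B}$ on $\Omega$ (both as a set and as a topology); continuity of the time-translation flow then again follows from Theorem~\ref{thm:cont_time_transl_hybrid}(i).

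The substantive ingredient is the continuity of the cocycle into $\mathcal{C}^{1,p}$, and this I would read off Theorem~\ref{thm:C1p>1}, whose three variants (i), (ii), (iii) are matched precisely to the three topologies $\T_{\Theta\smash{\widehat\Theta}}$, $\T_{\Theta D}$, $\T_D$ and the corresponding bound hypotheses. Concretely, given $(t_n,g_n,\phi_n)\to(t,g,\phi)$ in $\R\times\Omega\times\mathcal{C}^{1,p}$, convergence in the $\mathcal{C}^{1,p}$-norm entails uniform convergence $\phi_n\to\phi$, so Theorem~\ref{thm:C1p>1} gives $x(\cdot,g_n,\phi_n)\to x(\cdot,g,\phi)$ in $\mathcal{C}^{1,p}([-1,T])$ for every $[-1,T]\subset I_{g,\phi}$. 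Choosing $T$ beyond all the $t_n$ and $t$, I would then estimate
\begin{equation*}
\|x_{t_n}(\cdot,g_n,\phi_n)-x_t(\cdot,g,\phi)\|_{\mathcal{C}^{1,p}}\le\|x_{t_n}(\cdot,g_n,\phi_n)-x_{t_n}(\cdot,g,\phi)\|_{\mathcal{C}^{1,p}}+\|x_{t_n}(\cdot,g,\phi)-x_t(\cdot,g,\phi)\|_{\mathcal{C}^{1,p}},
\end{equation*}
where the first term is dominated by $\|x(\cdot,g_n,\phi_n)-x(\cdot,g,\phi)\|_{\mathcal{C}^{1,p}([-1,T])}\to0$ because the segment interval $[t_n-1,t_n]$ sits inside $[-1,T]$, and the second term is the continuity of the single segment curve $\tau\mapsto x_\tau(\cdot,g,\phi)$ into $\mathcal{C}^{1,p}$.

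The one step that deserves care -- and the expected main obstacle -- is this last continuity of $\tau\mapsto x_\tau(\cdot,g,\phi)$ in the $\mathcal{C}^{1,p}$-norm. The sup-norm component is routine from the continuity of the solution curve, but the derivative component requires that $\tau\mapsto\dot x(\tau+\cdot,g,\phi)=f\big(\tau+\cdot,x(\tau+\cdot),x(\tau+\cdot-1)\big)$ vary continuously in $L^p([-1,0])$, i.e.\ that translation act continuously on this $L^p_{loc}$ function; this is the standard continuity of translation in $L^p$, valid for $1\le p<\infty$. (The hypothesis $p>1$ itself enters earlier, through the construction of $\widehat\Theta$ in \eqref{eq:C1pmoduli} and hence through Theorem~\ref{thm:C1p>1}, rather than through this translation argument.) With the semiflow identities, the base-flow continuity, and the cocycle continuity all established, \cite[Corollary 3.4]{paper:LNO1} assembles them into the claimed continuous skew-product semiflow in each of the three cases.
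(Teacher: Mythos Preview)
Your proof is correct and follows the same approach as the paper, which simply cites \cite[Corollary 3.4]{paper:LNO1}, Theorem~\ref{thm:cont_time_transl_hybrid}(i) and Theorem~\ref{thm:C1p>1} and omits further detail. One small clarification: in the paper's usage, \cite[Corollary 3.4]{paper:LNO1} is not an abstract assembly lemma for skew-product flows but rather the statement that time translations act continuously for the non-hybrid topologies such as $\T_D$; this is why in case~(iii) the paper invokes it directly for the base flow, whereas you take the (equally valid) detour through Theorem~\ref{thm:equivalence_topologies_nonhybrid}(i) and Theorem~\ref{thm:cont_time_transl_hybrid}(i). Your added details on the openness of $\U_{(\LC_p,\T)}$, the cocycle identity, and the $L^p$-continuity of translation for the segment map $\tau\mapsto x_\tau$ are all sound and fill in exactly what the paper leaves implicit.
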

As regards the case $p=1$, an estimate like \eqref{eq:27.09-19:17} is not true anymore, and thus it seems difficult to find a suitable set of moduli of continuity to apply the same reasoning of the proof of Theorem~\ref{thm:C1p>1}. Nevertheless, looking back at Theorem~\ref{thm:ContinuousTTBdelay}, the following result provides a, maybe not surprising, yet significant, refinement of information.
\begin{thm}\label{thm:C11}
Consider $E\subset\LC(\R^{2N},\R^N)$ and any dense and countable subset $D$ of $\R^N$. Then, with the notation of {\rm Theorem~\ref{thm:13.04-16:49}}, the following statements~hold.
\begin{itemize}[leftmargin=20pt]\setlength\itemsep{0.3em}
\item[\rm (i)] If $E$ has $L^1_{loc}$-equicontinuous $m$-bounds and  $L^1_{loc}$-bounded $l_2$-bounds, let $\Theta$ be the suitable set of moduli of continuity given by the $m$-bounds. For any sequence $(f_n)_\nin$ in  $E$ converging to some $f$ in $(\LC(\R^{2N},\R^N),\T_{\Theta D})$  and  any sequence $\big(\phi_{n}(\cdot)\big)_\nin$  in $\mathcal{C}^{1,1}$ converging uniformly to $\phi\in \mathcal{C}^{1,1}$  one has
\begin{equation}\label{eq:convergence[-1T]}
\| x(\cdot,f_n,\phi_{n})  - x(\cdot,f,\phi)\|_{\mathcal{C}^{1,1}([-1,T])}\xrightarrow{\nti}0
\end{equation}
for any $[-1,T]\subset I_{f,\phi}$.
\item[\rm (ii)] If $E$ has  $L^1_{loc}$-bounded $l$-bounds, then~\eqref{eq:convergence[-1T]}holds when the convergence of the sequence $(f_n)_\nin$  to $f$ is with respect to~$\T_D$.
\end{itemize}
\end{thm}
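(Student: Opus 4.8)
The plan is to transpose the proof of Theorem~\ref{thm:C1p>1} to $p=1$, isolating the single place where $p>1$ was essential: the H\"older estimate~\eqref{eq:27.09-19:17}, which produced a modulus of continuity for the balls of $\mathcal{C}^{1,p}$ and hence the set $\widehat\Theta$. At $p=1$ no such modulus exists, so the control of the delay (history) variable on a compact set $\widehat\K$ must be replaced by a pointwise argument resting on the $l_2$\nbd-bounds, which is precisely what the weaker hybrid topology $\T_{\Theta D}$, together with the $L^1_{loc}$-boundedness of the $l_2$-bounds, makes available. As in Theorem~\ref{thm:C1p>1}, I would first note that for $t\ge0$ the solution obeys $\dot x(t,f,\phi)=f\big(t,x(t),x(t-1)\big)\in L^1_{loc}$, so $x(\cdot,f,\phi)\in\mathcal{C}^{1,1}(I)$ on every compact $I\subset I_{f,\phi}$ and the statement makes sense.

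The uniform part of the $\mathcal{C}^{1,1}([-1,T])$-norm, together with the $L^1([-1,0])$ part of the derivative (where $\dot x_n-\dot x=\dot\phi_n-\dot\phi$ is governed by the convergence of the initial data), is obtained for free: in case (i) the uniform convergence $x(\cdot,f_n,\phi_n)\to x(\cdot,f,\phi)$ on $[-1,T]$ follows from Theorem~\ref{thm:ContinuousTTBdelay}(ii), since $\T_{\Theta D}$ is stronger than $\sigma_{\Theta D}$, and in case (ii) from Theorem~\ref{thm:ContinuousTDdelay}. It then remains to show $\|\dot x_n-\dot x\|_{L^1(J)}\to0$ on a rational interval $J$ with $[0,T]\subset J\subset[0,b_{f,\phi})$; picking $j$ so that $\big(x_n(t),x_n(t-1)\big)$ and $\big(x(t),x(t-1)\big)$ lie in $B_{2j}$ for $t\in J$, we have $\dot x_n-\dot x=f_n\big(\cdot,x_n,x_n(\cdot-1)\big)-f\big(\cdot,x,x(\cdot-1)\big)$.

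In case (i) I would split this difference as
\[
\big[f_n(\cdot,x_n,x_n(\cdot-1))-f(\cdot,x_n,x_n(\cdot-1))\big]+\big[f(\cdot,x_n,x_n(\cdot-1))-f(\cdot,x,x(\cdot-1))\big].
\]
The second bracket is bounded in $L^1(J)$ by $2\,\|x_n-x\|_{L^\infty([-1,T])}\,\|l^{2j}_f\|_{L^1(J)}\to0$, using that the limit $f\in\LC$ is Lipschitz on $B_{2j}$. For the first bracket --- the genuinely new point --- I would approximate the fixed continuous profile $t\mapsto x(t-1)$ on $J$ by a step function $\bar w=\sum_m u_m\,\chi_{J_m}$ with $u_m\in D$ and $J_m$ rational subintervals, so that $|x(t-1)-\bar w(t)|<\delta$ on $J$. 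Replacing $x_n(\cdot-1)$ by $\bar w$ inside $f_n$ and inside $f$ introduces, for each of the two, an error bounded by $\big(\|x_n-x\|_{L^\infty([-1,T])}+\delta\big)\,C$, where $C<\infty$ uniformly bounds the relevant $l_2$-bounds by the $L^1_{loc}$-boundedness hypothesis (carried to the closure via Proposition~\ref{prop:07.07-19:44p=1}); while the leftover
\[
\sum_m\ \sup_{\xi(\cdot)\in\K^{J_m}_j}\big\|f_n(\cdot,\xi(\cdot),u_m)-f(\cdot,\xi(\cdot),u_m)\big\|_{L^1(J_m)}
\]
tends to $0$ as $\nti$, being a finite sum of $\T_{\Theta D}$-seminorms, since $x_n|_{J_m}\in\K^{J_m}_j$ thanks to the $m$-bound modulus $\Theta$ of Definition~\ref{def:THETAjseq}. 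Fixing $\delta$, letting $\nti$, and finally letting $\delta\to0$ yields the claim.

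For case (ii), the $L^1_{loc}$-boundedness of the $l$-bounds lets Theorem~\ref{thm:equivalence_topologies_nonhybrid}(i) collapse all strong topologies of~\eqref{eq:DelayTopologiesChain} on $E$, so $f_n\to f$ also in $\T_B$; the first bracket is then dominated directly by $\sup_{(\xi,\eta)\in C(J,B_{2j})}\|f_n(\cdot,\xi,\eta)-f(\cdot,\xi,\eta)\|_{L^1(J)}\to0$, exactly as in the proof of Theorem~\ref{thm:C1p>1}(iii), and no step-function reduction is needed. I expect the main obstacle to be precisely the delay term in case (i): lacking a compact set of admissible histories, one must barter the missing $\mathcal{C}^{1,1}$-modulus for $l_2$-Lipschitz control, and the delicate point is that the step-function error is absorbed \emph{uniformly in $n$} by the $L^1_{loc}$-bounded $l_2$-bounds, leaving only finitely many pointwise seminorms to be sent to zero.
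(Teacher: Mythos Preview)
Your argument is correct, but for part~(i) it takes a genuinely different route from the paper. The paper splits the derivative difference as
\[
\big[f_n(\cdot,x_n,x_n(\cdot-1))-f_n(\cdot,x_n,x(\cdot-1))\big]+\big[f_n(\cdot,x_n,x(\cdot-1))-f(\cdot,x,x(\cdot-1))\big],
\]
controls the first piece with the (uniformly $L^1_{loc}$-bounded) $l_2$-bounds of the $f_n$, and for the second piece simply invokes Theorem~\ref{thm:equivalence_topologies}(i): under $L^1_{loc}$-bounded $l_2$-bounds the hybrid strong topologies coincide on $E$ (and its closure, via Proposition~\ref{prop:07.07-19:44p=1}), so $\T_{\Theta D}$-convergence upgrades to $\T_{\Theta B}$-convergence and the term is dominated by a single $\T_{\Theta B}$-seminorm. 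You instead split at $f(\cdot,x_n,x_n(\cdot-1))$, use the $l$-bound of the limit $f$ for one piece, and for the other carry out by hand a step-function approximation in the delay slot to reduce to finitely many $\T_{\Theta D}$-seminorms. Your argument is self-contained and avoids quoting Theorem~\ref{thm:equivalence_topologies}, but in effect re-proves the particular consequence of that theorem needed here; the paper's route is shorter precisely because that machinery has already been built. For part~(ii) the two proofs agree.
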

\begin{proof}
Reasoning as in the first part of the proof of Theorem~\ref{thm:C1p>1},  and with the notation of {\rm Theorem~\ref{thm:13.04-16:49}}, one has that  for any $f\in\LC(\R^{2N},\R^N)$ and any $\phi\in\mathcal{C}^{1,1}$, the solution  $x(\cdot, f , \phi)\in\mathcal{C}^{1,1}(I_{f,\phi})$. \par\smallskip
Moreover, notice that $\mathcal{C}^{1,1}\subset \mathcal{C}$, and thanks to Theorem~\ref{thm:ContinuousTTBdelay}, one already has that, for any compact interval  $I\subset I_{f,\phi}$, the sequence of solutions $(x(\cdot,f_n,\phi_n))_\nin$ converges uniformly  to $x(\cdot,f,\phi)$ in $I$. Hence, in order to conclude the proof, we check the continuous variation of the derivatives in $L^1(I)$. Consider a sequence $(f_n)_\nin$  in $E$ converging to $f$ in $(\LC(\R^{2N},\R^N),\T_{\Theta D})$, a sequence $\big(\phi_{n}(\cdot)\big)_\nin$ in $\mathcal{C}^{1,1}$ converging to $\phi\in \mathcal{C}^{1,1}$ with respect to the topology induced by the norm $\|\cdot\|_{\mathcal{C}^{1,p}}$, and a compact interval $[-1,T]\subset[-1,b_{f,\phi})$, with $T\ge0$. Moreover, to simplify the notation, denote by $x_n(\cdot)=x (\cdot, f_n,\phi_n)$ and by $x(\cdot)=x (\cdot, f,\phi)$, and consider a compact interval $J\subset[0,b_{f,\phi})$ with rational extrema and such that $[0,T]\subset J$. Then, one has
\begin{align*}
\big\|\dot x_n (\cdot)& -\dot x(\cdot)\big\|_{L^1(J)}=\big\|f_n\big(\cdot,x_n(\cdot),x_n(\cdot-1)\big)-f\big(\cdot,x(\cdot),x(\cdot-1)\big)\big\|_{L^1(J)}\\[0.5ex]
&\le \big\|f_n\big(\cdot,x_n(\cdot),x_n(\cdot-1)\big)-f_n\big(\cdot,x_n(\cdot),x(\cdot-1)\big)\big\|_{L^1(J)}\\[0.5ex]
&\qquad + \big\|f_n\big(\cdot,x_n(\cdot),x(\cdot-1)\big)-f\big(\cdot,x(\cdot),x(\cdot-1)\big)\big\|_{L^1(J)}\\[0.5ex]
&\le \big\|x_n (\cdot)- x(\cdot)\big\|_{L^\infty(J-1)}\,\big\|l^{2j}_f(\cdot)\big\|_{L^1(J)}+\\[0.5ex]
&\qquad+\sup_{\xi(\cdot)\in\K^{J}_j,\,\eta(\cdot)\in \mathcal{C}(J-1, B_j)}\left\|f_n\big(\cdot,\xi(\cdot),\eta(\cdot-1)\big)-f\big(\cdot,\xi(\cdot),\eta(\cdot-1)\big)\right\|_{L^1(J)}.
\end{align*}
Now, recall that thanks to Proposition~\ref{prop:07.07-19:44p=1} and Theorem~\ref{thm:equivalence_topologies}(i), all the hybrid strong topologies are equivalent on $\mathrm{cls}_{(\LC(\R^{2N},\R^N),\T_{\Theta D})}(E)$, thus in particular the right-hand side of the previous chain of inequalities goes to zero as $\nti$ because $x_n(\cdot)\xrightarrow{L^\infty(J)}x(\cdot)$, $x(\cdot)$ is bounded on the compact interval $J$, and $f_n\xrightarrow{\T_{\Theta B}}f$. As a consequence, one has that  $x_n(\cdot)\xrightarrow{\nti}x(\cdot)$ in $\mathcal{C}^{1,1}([-1,T])$, which concludes the proof of (i). Analogous arguments hold for the case of $E$ with  $L^1_{loc}$-bounded $l$-bounds and allow us to proof (ii).
\end{proof}
As for Theorem~\ref{thm:C1p>1} and Proposition~\ref{prop:C}, and recalling that $\sigma_{\smash{\Theta D}}\le \T_{\smash{\Theta D}} $ for any suitable set of moduli of continuity $ \Theta$ and any $D$ dense and countable subset of $\R^N$,  also for $p=1$ it is possible to refine the result obtained in Theorem~\ref{thm:ContinuousTTBdelay} thanks to Theorem~\ref{thm:C11}. More precisely, under the assumptions of $L^1_{loc}$-equicontinuity of the  $m$-bounds and $L^1_{loc}$-boundedness of the $l_2$-bounds, also  from the continuous variation of the initial data in $\mathcal{C}([-1,0])$, we obtain  the continuous variation of the solutions in $\mathcal{C}^{1,1}([0,T])$ for any $[0,T]\subset I_{f,\phi}$, as stated in the next result whose proof is omitted.
\begin{prop}
With the notation of {\rm Theorem~\ref{thm:13.04-16:49}} the following statements hold.
\begin{itemize}[leftmargin=20pt]\setlength\itemsep{0.3em}
\item[\rm (i)]  If $E$ has $L^1_{loc}$-equicontinuous $m$-bounds and $L^1_{loc}$-bounded $l_2$-bounds, let $\Theta$ be the suitable set of moduli of continuity given by the $m$-bounds.  For any sequence $(f_n)_\nin$ in $E$ converging to $f$ in $(\LC(\R^{2N},\R^N),\T_{\smash{\Theta D}})$ and any sequence $\big(\phi_{n}(\cdot)\big)_\nin$ in $\mathcal{C}$ converging uniformly to $\phi\in \mathcal{C}$, one has that
\begin{equation}\label{eq:convergence[0T]}
\| x(\cdot,f_n,\phi_{n})  - x(\cdot,f,\phi)\|_{\mathcal{C}^{1,1}([0,T])}\xrightarrow{\nti}0
\end{equation}
for any $[0,T]\subset I_{f,\phi}$.
\item[\rm (ii)]   If $E$ has $L^1_{loc}$-bounded $l$-bounds, then~\eqref{eq:convergence[0T]} also holds when the convergence of the sequence
    $(f_n)_\nin$  to $f$ is  with respect to~$\T_D$.
\end{itemize}
\end{prop}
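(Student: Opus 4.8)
The plan is to replay the derivative estimate from the proof of Theorem~\ref{thm:C11}, but to carry it out on a compact interval $J$ lying in the \emph{positive} half-line $[0,b_{f,\phi})$; the point is that once the interval $[-1,0]$ is discarded, the derivatives $\dot\phi_n,\dot\phi$ of the initial data never enter the $\mathcal{C}^{1,1}$-norm, so the mere uniform (that is, $\mathcal{C}$-) convergence $\phi_n\to\phi$ suffices in place of the $\mathcal{C}^{1,1}$-convergence demanded in Theorem~\ref{thm:C11}. First I would fix $[0,T]\subset I_{f,\phi}$ and recall, exactly as in the opening of the proof of Theorem~\ref{thm:C11}, that $\dot x(t,f,\phi)=f\big(t,x(t),x(t-1)\big)\in L^1_{loc}$ for $t>0$, so that $x(\cdot,f,\phi)\in\mathcal{C}^{1,1}(J)$ for every compact $J\subset[0,b_{f,\phi})$ even though $\phi$ is merely continuous; this is what makes the target norm meaningful. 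Since $\T_{\Theta D}$ (resp. $\T_D$) is stronger than $\sigma_{\Theta D}$, Theorem~\ref{thm:ContinuousTTBdelay}(ii) (resp. Theorem~\ref{thm:ContinuousTDdelay}), together with $\phi_n\to\phi$ uniformly, already gives that $x_n:=x(\cdot,f_n,\phi_n)$ converges to $x:=x(\cdot,f,\phi)$ uniformly on every compact subinterval of $I_{f,\phi}$, which disposes of the $L^\infty([0,T])$ part of the $\mathcal{C}^{1,1}$-norm.

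The core is the $L^1$-convergence of the derivatives. I would choose a compact interval $J\subset[0,b_{f,\phi})$ with rational endpoints and $[0,T]\subset J$, and $j\in\N$ so large that $x_n(t),x(t)\in B_j$ for all $t\in J$ and all $n$; because $J$ sits in the positive half-line, the restrictions $x_n|_J,x|_J$ obey the modulus $\theta^J_j\in\Theta$ coming from the $m$-bounds (the limit $x$ inheriting it through Proposition~\ref{prop:07.07-19:44p=1}), hence lie in $\K^J_j$. Splitting as in Theorem~\ref{thm:C11},
\[
\big\|\dot x_n-\dot x\big\|_{L^1(J)}\le \big\|f_n(\cdot,x_n,x_n(\cdot-1))-f_n(\cdot,x_n,x(\cdot-1))\big\|_{L^1(J)}+\big\|f_n(\cdot,x_n,x(\cdot-1))-f(\cdot,x,x(\cdot-1))\big\|_{L^1(J)},
\]
I would control the first summand by $\|x_n-x\|_{L^\infty(J-1)}$ times the $l_2$-bound on $B_{2j}$, which is uniformly $L^1(J)$-bounded on the closure by Proposition~\ref{prop:07.07-19:44p=1}. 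The second summand I would break once more into an $f_n-f$ term evaluated at the common arguments $(x_n,x(\cdot-1))$, bounded by the $\T_{\Theta B}$-seminorm of $f_n-f$ (legitimate since $\T_{\Theta D}=\T_{\Theta B}$ on the closure by Proposition~\ref{prop:07.07-19:44p=1} and Theorem~\ref{thm:equivalence_topologies}(i), whence $f_n\to f$ in $\T_{\Theta B}$), and a first-argument term $\|f(\cdot,x_n,x(\cdot-1))-f(\cdot,x,x(\cdot-1))\|_{L^1(J)}$, which vanishes either via the $l$-bound of the single limit $f$ (in $L^1_{loc}$ because $f\in\LC$) or, more cheaply, by dominated convergence against the majorant $m_f^{2j}\in L^1(J)$ using continuity of $f(t,\cdot,\cdot)$ and $x_n\to x$ pointwise. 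For statement (ii) the same scheme runs verbatim with the $l$-bounds replacing the $l_2$-bounds and the $\T_B$-seminorm replacing $\T_{\Theta B}$, invoking Theorem~\ref{thm:equivalence_topologies_nonhybrid}(i) and Theorem~\ref{thm:ContinuousTDdelay}.

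The step deserving the most care — and the one explaining why the weaker hypothesis on $(\phi_n)$ is admissible — is the first summand, as it is the only place the initial data re-enters, through $\|x_n-x\|_{L^\infty(J-1)}$ with $J-1\supset[-1,0]$. On $[-1,0]$ this supremum is exactly $\|\phi_n-\phi\|_{L^\infty([-1,0])}$ and on the remaining part of $J-1$ it is $\|x_n-x\|_{L^\infty}$; both tend to $0$ by, respectively, the uniform convergence of the initial data and of the solutions, and crucially no control of $\dot\phi_n-\dot\phi$ is needed. Collecting the three vanishing contributions gives $\|\dot x_n-\dot x\|_{L^1([0,T])}\to0$, which together with the uniform convergence on $[0,T]$ establishes \eqref{eq:convergence[0T]}. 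I expect the only genuinely delicate points to be the bookkeeping that keeps all constants uniform in $n$ (handled by Proposition~\ref{prop:07.07-19:44p=1}) and the verification that the limit solution $x$ indeed inherits the $\Theta$-modulus on $J$, so that it is a legitimate competitor in the $\K^J_j$-suprema.
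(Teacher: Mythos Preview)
Your proposal is correct and follows essentially the same route the paper intends: the proof there is omitted with a pointer to the argument of Theorem~\ref{thm:C11} replayed on an interval $J\subset[0,b_{f,\phi})$, combined with the uniform convergence of solutions supplied by Theorem~\ref{thm:ContinuousTTBdelay}(ii) (resp.\ Theorem~\ref{thm:ContinuousTDdelay}). Your three-term decomposition of $\|\dot x_n-\dot x\|_{L^1(J)}$ is in fact slightly more careful than the two-term split displayed in the proof of Theorem~\ref{thm:C11}, where the passage from $\|f_n(\cdot,x_n,x(\cdot-1))-f(\cdot,x,x(\cdot-1))\|_{L^1(J)}$ directly to the $\T_{\Theta B}$-supremum is compressed; your additional step isolating $\|f(\cdot,x_n,x(\cdot-1))-f(\cdot,x,x(\cdot-1))\|_{L^1(J)}$ and handling it via the $l$-bound of the single limit $f\in\LC$ makes that passage explicit.
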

Finally, one obtains the following theorem for the continuity of the skew-product semiflow on the hull of a function $f\in\LC(\R^{2N},\R^N)$  with phase space $\mathcal{C}^{1,1}$.\par\smallskip
We denote by
$\U_{(\LC,\T)}$ the subset of $\R\times\mathrm{Hull}_{(\LC,\T)}(f)\times \mathcal{C}^{1,1}$ given by
\begin{equation*}
\U_{(\LC,\T)}=\bigcup_{g\in\mathrm{Hull}_{(\LC,\T)}(f)\,,\,\phi\in \mathcal{C}^{1,1}}
\{(t,g,\phi)\mid t\in I_{g,\phi}\}\,.
\end{equation*}
 where $\T$ is any of the topologies defined on $\LC(\R^{2N},\R^N)$.
\begin{thm} Consider $f\in\LC(\R^{2N},\R^N)$, any dense and countable subset $D$ of $\R^N$, and  the map
\begin{equation}\label{eq:skew-p.continuity-p=1}
\begin{split}
\Phi:\U\subset\R^+\times \mathrm{Hull}_{(\LC,\T)}(f)\times \mathcal{C}^{1,1}\quad&\to\quad\mathrm{Hull}_{(\LC,\T)}(f)\times \mathcal{C}^{1,1},\\
\ (t,f,\phi)\qquad \qquad \qquad &\mapsto\quad\qquad \big(f_t, x_t(\cdot,f,\phi)\big)\,.
\end{split}
\end{equation}
\begin{itemize}[leftmargin=20pt]\setlength\itemsep{0.3em}
\item[\rm (i)] If $f\in\LC(\R^{2N},\R^N)$  has $L^1_{loc}$-equicontinuous $m$-bounds and $L^1_{loc}$-bounded $l_2$-bounds,  let  $\Theta$ be the suitable set of moduli of continuity given by the $m$-bounds. Then the map~\eqref{eq:skew-p.continuity-p=1} defines a continuous skew-product semiflow for $\T=\T_{\Theta D}$.
\item[\rm (ii)]  If $f$ has  $L^1_{loc}$-bounded $l$-bounds, then the map~\eqref{eq:skew-p.continuity-p=1}  defines a continuous skew-product semiflow for $\T=\T_D$.
\end{itemize}
\end{thm}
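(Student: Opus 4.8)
The plan is to deduce the statement from the abstract criterion \cite[Corollary 3.4]{paper:LNO1}, which upgrades two separate continuity facts — continuity of the base flow and continuity of the cocycle into the phase space — into continuity of the full skew-product semiflow. First I would note that the algebraic structure costs nothing here: the semiflow identity $x_{t+s}(\cdot,g,\phi)=x_t\big(\cdot,g_s,x_s(\cdot,g,\phi)\big)$ and the initial condition $x_0(\cdot,g,\phi)=\phi$ follow at once from the uniqueness of solutions in Theorem~\ref{thm:13.04-16:49}, so the entire content reduces to the joint continuity of the map \eqref{eq:skew-p.continuity-p=1}.

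Before that, I would record that the base is well posed. By Proposition~\ref{prop:07.07-19:44p=1} the hypotheses on $f$ are inherited by the whole hull: under the assumptions of (i) every $g\in\mathrm{Hull}_{(\LC,\T_{\Theta D})}(f)$ again lies in $\LC(\R^{2N},\R^N)$ with $L^1_{loc}$-equicontinuous $m$-bounds and $L^1_{loc}$-bounded $l_2$-bounds, and under those of (ii) with $L^1_{loc}$-bounded $l$-bounds, so the quantitative bounds needed below are available uniformly on the hull. For the continuity of the translation flow $(t,g)\mapsto g_t$, part (i) is covered directly by Theorem~\ref{thm:cont_time_transl_hybrid}(i) for $\T_{\Theta D}$, while for part (ii) the corresponding continuity on $\mathrm{Hull}_{(\LC,\T_D)}(f)$ is the classical statement for $\T_D$, which is part of the machinery of \cite[Corollary 3.4]{paper:LNO1}.

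Next I would supply the fibre continuity. Applying Theorem~\ref{thm:C11} with $E=\mathrm{Hull}_{(\LC,\T)}(f)$ — legitimate precisely because of the propagation of bounds above — gives, for any $g_n\to g$ in the relevant topology and any $\phi_n\to\phi$ uniformly in $\mathcal{C}^{1,1}$, that $x(\cdot,g_n,\phi_n)\to x(\cdot,g,\phi)$ in $\mathcal{C}^{1,1}([-1,T])$ on every compact $[-1,T]\subset I_{g,\phi}$; here (i) uses $\T_{\Theta D}$ with the $l_2$-bound hypothesis and (ii) uses $\T_D$ with the $l$-bound hypothesis. Composing this with the continuity of the evaluation $x(\cdot)\mapsto x_t(\cdot)$ in the $\mathcal{C}^{1,1}$-norm, which holds because $x(\cdot,g,\phi)\in\mathcal{C}^{1,1}$ on compact subintervals and its time-translates therefore vary continuously in $t$, yields the joint continuity of $(t,g,\phi)\mapsto x_t(\cdot,g,\phi)$. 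Feeding the two continuity facts into \cite[Corollary 3.4]{paper:LNO1} then closes the argument.

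The only genuine subtlety, already absorbed into Theorem~\ref{thm:C11}, is the reason the hypotheses take their present form: at $p=1$ the H\"older estimate \eqref{eq:27.09-19:17} collapses, so no single modulus of continuity controls the unit ball of $\mathcal{C}^{1,1}$ and the finer topology $\T_{\Theta\smash{\widehat\Theta}}$ used for $p>1$ in Theorem~\ref{thm:C1p>1} is unavailable. One is then forced to control $\dot x_n=f_n\big(\cdot,x_n(\cdot),x_n(\cdot-1)\big)$ by splitting the $L^1$-difference of the derivatives into a term bounded by $\|x_n-x\|_{L^\infty}\,\|l^{2j}_f\|_{L^1}$ and a term that vanishes through convergence of $f_n$ on the compact sets $\K^J_j$, the equivalence of the hybrid strong topologies on the closure (Proposition~\ref{prop:07.07-19:44p=1} together with Theorem~\ref{thm:equivalence_topologies}(i)) being exactly what makes the second term controllable by $\T_{\Theta D}$ alone. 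I expect the assembly itself to be routine once these earlier results are in hand; the main conceptual obstacle is precisely this $p=1$ degeneracy, which dictates both the assumption on the $l_2$- (or $l$-) bounds and the passage to the weaker topology.
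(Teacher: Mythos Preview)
Your proposal is correct and follows essentially the same route as the paper: well-definedness as in the first part of Theorem~\ref{thm:C1p>1}, base-flow continuity via Theorem~\ref{thm:cont_time_transl_hybrid} for (i) and via \cite[Corollary~3.4]{paper:LNO1} for (ii), and fibre continuity from Theorem~\ref{thm:C11} in both cases. One small point: your opening sentence frames \cite[Corollary~3.4]{paper:LNO1} as an abstract ``gluing'' criterion, but in this paper it is invoked only as the translation-continuity statement for the classical topology $\T_D$; since your detailed argument uses it in exactly that role, this does not affect the validity of the proof.
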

\begin{proof}
Reasoning as in the first part of the proof of Theorem~\ref{thm:C1p>1} one has that $\Phi$ is well-defined. The continuity in the two components is a direct consequence of Theorem~\ref{thm:cont_time_transl_hybrid} and Theorem~\ref{thm:C11} for statement (i), and \cite[Corollary 3.4]{paper:LNO1} and Theorem~\ref{thm:C11} for (ii).
\end{proof}
\begin{rmk}
Again, as for the previous section, when a result of equivalence of the topologies (either Theorem~\ref{thm:equivalence_topologies_nonhybrid} or Theorem~\ref{thm:equivalence_topologies}) holds on $E$, we chose to write the continuity of the  skew-product semiflow using the most practical available topology, i.e. the one that involves some kind of point-wise convergence. It is implicit that the same result holds with respect to any of the other equivalent or stronger topologies of type~\eqref{eq:DelayTopologiesChain}.
\end{rmk}


\begin{thebibliography}{99}
\bibitem{book:LA} \textsc{L. Arnold}: \emph{Random Dynamical Systems}, \textrm{Springer-Verlag, Berlin, Heidelberg, 1998.}
\bibitem{paper:ZA1} \textsc{Z. Artstein}: Topological dynamics of an ordinary differential equation, \emph{J. Differential Equations} \textbf{23} (1977), 216--223.
\bibitem{paper:ZA2} \textsc{Z. Artstein}: Topological dynamics of ordinary differential equations and Kurzweil equations, \emph{J. Differential Equations} \textbf{23} (1977), 224--243.
\bibitem{paper:ZA3} \textsc{Z. Artstein}: The limiting equations of nonautonomous ordinary differential equations, \emph{J. Differential Equations} \textbf{25} (1977), 184--202.
\bibitem{paper:AW} \textsc{B. Aulbach, T. Wanner}: Integral manifolds for Carath\'eodory type differential equations in Banach spaces, \emph{Six Lectures on Dynamical Systems}
(B. Aulbach \& F. Colonius eds), World Scientific, Singapore, 1996, 45--119.
\bibitem{book:CH} \textsc{T. Caraballo, X. Han}: \emph{Applied Non-Autonomous And Random Dynamical Systems. Applied Dynamical Systems}. SpringerBriefs in Mathematics. Springer, Cham, 2016.
\bibitem{book:CLR} \textsc{A. Carvalho, J.A. Langa, J. Robinson}: \emph{Attractors for infinite-dimensional non-au\-tono\-mous dynamical systems}, \textrm{Springer-Verlag New York, 2013.}
\bibitem{book:CL} \textsc{E.A. Coddington, N. Levinson}: \emph{Theory Of Ordinary Differential Equations}, \textrm{McGraw-Hill, New York, 1955.}
\bibitem{book:DS} \textsc{N. Dunford, J.T. Schwartz}: \emph{Linear Operators: Part I General Theory}, \textrm{Wiley-Interscience, New York, 1988.}
\bibitem{paper:CH} \textsc{J.K Hale, M.A. Cruz}: Existence, uniqueness and continuous dependence for hereditary systems. \emph{Ann. Mat. Pura Appl.} (4) \textbf{85} (1970), 63-–81.
\bibitem{book:HAL} \textsc{J.K. Hale, S.M. Verduyn Lunel},
        \textit{Introduction to Functional Differential Equations}, Applied Mathematical Sciences {\bf 99}, Springer-Verlag, Berlin, Heidelberg, New York 1993.
\bibitem{paper:AJH} \textsc{A.J. Heunis}: Continuous dependence of the solutions of an ordinary differential equation, \emph{J. Differential Equations} \textbf{54} (1984), 121--138.
\bibitem{book:JONNF} \textsc{R. Johnson, R. Obaya, S. Novo, C. Nu\~nez, R. Fabbri}: \emph{Nonautonomous Linear Hamiltonian Systems: Oscillation, Spectral Theory And Control.}, Developments in Mathematics \textbf{36}, \textrm{Springer, Switzerland, 2016.}
\bibitem{book:Kall} \textsc{O. Kallenberg}: \emph{Random Measures.} Third Edition, \textrm{Akademie-Verlag, Berlin, 1983.}
\bibitem{paper:LNO1} \textsc{I.P. Longo, S. Novo, R. Obaya}: Topologies of $L^p_{loc}$ type for Carath\'eodory functions with applications in non-autonomous differential equations,  \emph{J. Differential Equations}, \textbf{263} (2017), 7187--7220.
\bibitem{paper:LNO2} \textsc{I.P. Longo, S. Novo, R. Obaya}: Weak topologies for Carath\'eodory differential equations: continuous dependence, exponential dichotomy and attractors, \emph{J Dyn Diff Equat} (2018). https://doi.org/10.1007/s10884-018-9710-y.
\bibitem{book:RMGS} \textsc{R.K. Miller, G. Sell}: Volterra Integral Equations and Topological Dynamics, \emph{Mem. Amer. Math. Soc.}, no. 102, \textrm{Amer. Math. Soc., Providence, 1970.}
\bibitem{paper:RMGS1} \textsc{R.K. Miller, G. Sell}: Existence, uniqueness and continuity of solutions of integral equations, \emph{Ann. Math. Pura Appl.} \textbf{80} (1968), 135--152;  Addendum: ibid. \textbf{87} (1970), 281--286.
\bibitem{paper:LWN} \textsc{L.W. Neustadt}: On the solutions of certain integral-like operator equations. Existence, uniqueness and dependence theorems, \emph{Arch. Rational Mech. Anal.} \textbf{38} (1970), 131--160.
\bibitem{paper:O} \textsc{Z. Opial}: Continuous parameter dependence in linear systems of differential equations, \emph{J. Differential Equations} \textbf{3} (1967), 571--579.
\bibitem{paper:CPMR} \textsc{C. P\"otzsche, M. Rasmussen}: Computation of integral manifolds for Carath\'{e}odory differential equations, \emph{J. Numer. Anal.} \textbf{30} (2010), 401--430.
\bibitem{sase}  \textsc{ R.J.~Sacker, G.R.~Sell}:
        A spectral theory for linear differential systems,
        \emph{ J. Differential Equations}  {\bf 27} (1978), 320--358.
\bibitem{paper:GS1} \textsc{G. Sell}: Compact sets of nonlinear operators, \emph{Funkcial. Ekvac.} \textbf{11} (1968), 131--138.
\bibitem{book:GS} \textsc{G. Sell}: \emph{Topological Dynamics and Ordinary Differential Equations}, \textrm{Van Nostrand-Reinhold, London, 1971.}
\bibitem{paper:WSYY} \textsc{W. Shen, Y. Yi}: Almost automorphic and almost periodic dynamics in skew-product semiflows, \emph{Mem. Amer. Math. Soc.} \textbf{136}, no. 647, \textrm{Amer. Math. Soc., Providence,  1998}.
\end{thebibliography}
\end{document}